\theoremstyle{plain}
\newtheorem{thm}{Theorem}[section]
\newtheorem{lem}[thm]{Lemma}
\newtheorem{cor}[thm]{Corollary}
\newtheorem{cl}[thm]{Claim}
\newtheorem{prop}[thm]{Proposition}
\theoremstyle{definition}
\newtheorem{ex}[thm]{Example}
\newtheorem{rem}[thm]{Remark}
\newcommand*{\claimproofname}{Proof of claim.}
\newenvironment{claimproof}[1][\claimproofname]{\begin{proof}[#1]}{\end{proof}}
\def\final{0}  % set this to 1 to get a comment-free version
\def\iflong{\iffalse}
\newcommand{\kristof}[1]{{\color{red}[{\textbf{Kristóf:} #1}]\marginpar{\color{red}*}}}
\newcommand{\tamas}[1]{{\color{blue}[{\textbf{Tamás:} #1}]\marginpar{\color{blue}*}}}
\newcommand{\andris}[1]{{\color{magenta}[{\textbf{Andris:} #1}]\marginpar{\color{magenta}*}}}
\newcommand{\bogi}[1]{{\color{green}[{\textbf{Bogi:} #1}]\marginpar{\color{green}*}}}
\newcommand{\laci}[1]{{\color{purple}[{\textbf{Laci:} #1}]\marginpar{\color{purple}*}}}
\newcommand{\kristof}[1]{}
\newcommand{\tamas}[1]{}
\newcommand{\andris}[1]{}
\newcommand{\bogi}[1]{}
\newcommand{\laci}[1]{}
\DeclareMathOperator*{\argmax}{arg\,max}
\newcommand{\msf}{\text{\tt msf}}
\def\obt#1{\overbracket[.5pt][2pt]{#1}}
\def\ubt#1{\underbracket[.5pt][2pt]{#1}}
\newcommand{\R}{\mathbb{R}}
\newcommand{\bR}{\mathbb{R}}
\newcommand{\bQ}{\mathbb{Q}}
\newcommand{\bZ}{\mathbb{Z}}
\newcommand{\bE}{\mathbb{E}}
\newcommand{\cB}{\mathcal{B}}
\newcommand{\cC}{\mathcal{C}}
\newcommand{\cE}{\mathcal{E}}
\newcommand{\cK}{\mathcal{K}}
\newcommand{\cQ}{\mathcal{Q}}
\newcommand{\cT}{\mathcal{T}}
\newcommand{\cA}{\mathcal{A}}
\newcommand{\cU}{\mathcal{U}}
\let\Right\bigr
\let\Left\bigl
\def\bigr#1{\Right#1\@ifnextchar){\!\bigr}{}}
\def\bigl#1{\Left#1\@ifnextchar({\!\bigl}{}}
\title{Monotonic Decompositions of Submodular Set Functions}
\author{
Kristóf Bérczi\thanks{MTA-ELTE Matroid Optimization Research Group and HUN-REN–ELTE Egerváry Research Group, Department of Operations Research, Eötvös Loránd University, and HUN-REN Alfréd Rényi Institute of Mathematics, Budapest, Hungary. Email: \texttt{kristof.berczi@ttk.elte.hu}.}
\and
Boglárka Gehér\thanks{Department of Applied Analysis and Computational Mathematics, Eötvös Loránd University, and HUN-REN Alfréd Rényi Institute of Mathematics, Budapest, Hungary. Email: \texttt{bogigeher@gmail.com}.}
\and
András Imolay\thanks{Department of Operations Research, Eötvös Loránd University, Budapest, Hungary. Email: \texttt{andras.imolay@ttk.elte.hu}.}
\and
László Lovász\thanks{HUN-REN Alfréd Rényi Institute of Mathematics, Budapest, Hungary. Email: \texttt{laszlo.lovasz@ttk.elte.hu}.}
\and
Tamás Schwarcz\thanks{MTA-ELTE Matroid Optimization Research Group, Department of Operations Research, Eötvös Loránd University, Budapest, Hungary. Email: \texttt{tamas.schwarcz@ttk.elte.hu}.}
}
\date{}
\begin{document}
\maketitle
\tableofcontents

%%%%%%%%%%%%%%%%%%%%%%%%%%%%%%%%
 \newpage
%%%%%%%%%%%%%%%%%%%%%%%%%%%%%%%%

%%%%%%%%%%%%%%%%%%%%%%%%%%%%%%%%
\begin{abstract} 
Submodular set functions are undoubtedly among the most important building blocks of combinatorial optimization. Somewhat surprisingly, continuous counterparts of such functions have also appeared in an analytic line of research where they found applications in the theory of finitely additive measures, nonlinear integrals, and electric capacities. Recently, a number of connections between these two branches have been established, and the aim of this paper is to generalize further results on submodular set functions on finite sets to the analytic setting.

We first extend the notion of duality of matroids to submodular set functions, and characterize the uniquely determined decomposition of a submodular set function into the sum of a nonnegaive charge and an increasing submodular set function in which the charge is maximal. Then, we describe basic properties of infinite-alternating set functions, a subclass of submodular set functions that serves as an analytic counterpart of coverage functions. By relaxing the monotonicity assumption in the definition, we introduce a new class of submodular functions with distinguished structural properties that includes, among others, weighted cut functions of graphs. We prove that, unlike general submodular set functions over an infinite domain, any infinite-alternating set function can be written as the sum of an increasing and a decreasing submodular function or as the difference of two increasing submodular functions, thus giving extension of results on monotonic decompositions in the finite case. Finally, motivated by its connections to graph parameters such as the maximum size of a cut and the maximum size of a fractional triangle packing, we study the structure of such decompositions for weighted cut functions of undirected graphs. 

\medskip

\noindent \textbf{Keywords:} Coverage functions, Infinite-alternating functions, Maximum cut, Submodular functions

\end{abstract}
%%%%%%%%%%%%%%%%%%%%%%%%%%%%%%%%

%%%%%%%%%%%%%%%%%%%%%%%%%%%%%%%%
\section{Introduction}
\label{sec:intro}
%%%%%%%%%%%%%%%%%%%%%%%%%%%%%%%%

The study of submodular set functions goes back to the pioneering work of Rado~\cite{rado1942theorem} and Edmonds~\cite{edmonds2003submodular}. These functions are defined over subsets of a given set and capture the notion of diminishing returns, roughly meaning that the marginal gain from adding an element to a smaller set is at least as great as adding it to a larger set that contains the former. This property not only captures intuitive economic behaviors but also aligns with practical decision-making processes where resources are limited. As a result, submodular set functions have many practical applications, such as network design and facility location in combinatorial optimization~\cite{ageev19990,cornuejols1977location}, feature selection in machine learning and data mining~\cite{krause2005near}, auction design in economics~\cite{lehmann2006combinatorial}, and segmentation tasks in computer vision~\cite{boykov2001interactive,jegelka2011submodularity}. The main reason for the wide range of applications is that submodularity often allows for efficient optimization. An illustrative example is the greedy algorithm for maximizing a monotone submodular set function under a cardinality constraint~\cite{nemhauser1978analysis}, which provides a constant-factor approximation of the optimum value. We refer the interested reader to~\cite{frank2011connections,fujishige2005submodular,schrijver2003combinatorial} for a thorough introduction to the theory of submodular set functions, and to~\cite{bilmes2022submodularity} for a recent survey.

Less well known in the optimization community, submodularity has also appeared under the name ``strong subbaditivity'' in an analytic line of research that originated in the seminal paper of Choquet~\cite{choquet1954theory}. His work was inspired by a question raised by Brelot and Cartan concerning the relation between the interior and exterior Newtonian capacity of a Borel subset of $\bR^3$, and culminated in a comprehensive study of set functions defined on borelian sets. Recently, motivated by the growing demand for a limit theory of matroids, Lovász~\cite{lovasz2023submodular} described several connections between the combinatorial and analytic theory. 

One of Choquet's important constructions introduced in his paper is the Choquet integral of a bounded measurable function with respect to any monotone set function. This is extended in \cite{lovasz2023submodular} to integrating with respect to certain not necessarily monotone set functions, namely those with bounded variation. This latter property is equivalent to being able to decompose the set function as the sum of an increasing and of a decreasing (finite valued) set function. It is proved that every submodular set function can be written as such a sum. These two set functions are subadditive, which suggests the question whether one could require more: Can every bounded submodular set function be written as the sum of two submodular set functions, one increasing and one decreasing? One can also ask a related question: Can every bounded submodular set function be written as the difference of two submodular set functions, both increasing? 

Such decompositions play an important role in the analytic world. Besides defining integration with respect to them, another possible application is that in order to verify a statement for submodular functions, it is often sufficient to prove it for the monotone components separately --- a similar phenomenon can be observed in a number of cases concerning measures and signed measures. The problem is also motivated by combinatorial auctions and machine learning applications where the problem of minimizing the difference of submodular functions~\cite{narasimhan2005submodular,iyer2012algorithms} or a sum of a submodular and supermodular function~\cite{bai2018greed} arises naturally. 

%%%%%%%%%%%%%%%%
\subsection{Our contribution}
%%%%%%%%%%%%%%%%

In this paper, we generalize the notion of duality of matroids to submodular set functions, and use it to characterize the (uniquely determined) decomposition of a submodular set function into the sum of a nonnegaive charge and an increasing submodular set function in which the charge is maximal.

Next, we show that in the finite case, every submodular set function has a decomposition into the sum of an increasing and a decreasing submodular set function, but this fails in the infinite case. The situation with the question about difference will be similar. 

We also study the decomposability of submodular set functions into monotone parts under additional conditions. 
%We answer Questions~\ref{qu:main}\ref{it:aa} and~\ref{it:bb} in the negative.
While investigating the properties of capacities, Choquet~\cite{choquet1954theory} introduced the notion of infinite-alternating set functions, a class consisting of increasing submodular set functions with strong structural properties. He gave a thorough analysis of such functions from an analytical point of view, and determined the extreme rays of their convex cone. His work, however, has not yet been discussed in the context of discrete optimization. In addition to describing basic properties of infinite-alternating functions, we explain their relation to coverage functions, an important special case of submodular functions~\cite{bhaskar2019complexity,chakrabarty2012testing,chakrabarty2015recognizing}. We then initiate the study of a new class obtained by dropping the monotonicity assumption in Choquet's work, leading to the notion of {\it weakly infinite-alternating set functions.} We present several examples of infinite-alternating and weakly infinite-alternating set functions, and also show that any set function over a finite domain can be written as the difference of two infinite-alternating set functions. 

As a positive result for the infinite case, we show that the required decompositions always exist for weakly infinite-alternating functions, even in the stronger form when one of the functions is ought to be infinite-alternating and the other to be a finitely additive measure. One of the main technical ingredients is establishing a connection between the finite and infinite cases: The existence of a decomposition in the infinite case is often equivalent to the existence of a decomposition in the finite setting with absolute bounds on the function values. 

Weighted cut functions of undirected graphs are probably the most fundamental examples of weakly infinite-alternating set functions. Determining the maximum weight of a cut, called the \textsc{Max-Cut} problem, is a well studied graph theoretic problem that has received a lot of attention. It is one of Karp's 21 NP-complete problems~\cite{Karp1972}, and though it is known to be APX-hard, it has several applications in clustering, VLSI design and in network designs. This motivates the study of decompositions of weighted cut functions into monotone parts. We analyze the structure of such decompositions in more detail and reveal connections between the properties of the functions in the decomposition and certain graph parameters, such as the maximum size of a fractional packing of triangles.

\medskip

The rest of the paper is organized as follows. Basic definitions and notation are introduced in Section~\ref{sec:prelim}. In Section \ref{SEC:SBOUNDED} we construct decompositions under a strong boundedness condition. In Section \ref{sec:alternating}, we analyze the properties of alternating set functions and introduce the notion of weakly infinite-alternating set functions, a subclass of submodular set functions. Section~\ref{sec:main} presents constructions that disprove the existence of sum- and diff-decompositions in the infinite case: we first reduce the problems from infinite to finite domain in Section~\ref{sec:finite}, and then describe the counterexamples in Sections~\ref{sec:sum} and~\ref{sec:difference}, respectively. Section~\ref{sec:strongly} is devoted to the analysis of weakly infinite-alternating set functions, leading to an extension of sum-decompositions from the finite case to this class. Graph cut functions are discussed in Section~\ref{sec:cut} where properties of the decompositions are studied in the context of certain graph parameters. Several remarks and examples are included throughout to provide further insight and help the reader to understand the basic concepts.    

%%%%%%%%%%%%%%%%%%%%%%%%%%%%%%%%
\section{Preliminaries}
\label{sec:prelim}
%%%%%%%%%%%%%%%%%%%%%%%%%%%%%%%%

%%%%%%%%%%%%%%%%
\paragraph{Basic notation.}
%%%%%%%%%%%%%%%%

We denote the sets of \emph{reals}, \emph{rationals} and \emph{integers} by $\bR$, $\bQ$ and $\bZ$, respectively, and we add $+$ as a subscript when restricting the given set to \emph{nonnegative} values only. The set of \emph{even integers} is denoted by $2\cdot\bZ$. For a positive integer $k$, we use $[k]\coloneqq \{1,\dots,k\}$. If $X\subseteq J$ and $y\in J$, then $X\setminus \{y\}$ and $X\cup \{y\}$ are abbreviated as $X-y$ and $X+y$, respectively. Similarly, for a set function $f$ and $x\in J$, we simply write $f(x)$ instead of $f(\{x\})$. If $J$ is finite, $X\subseteq J$ and $w\colon J\to\bR$ is a weight function, then we use the notation $w(X)=\sum_{e\in X}w(e)$. For a vector $v\in\bR^n$, we denote by $v^+$ and $v^-$ the \emph{positive} and \emph{negative parts} of $v$, respectively, i.e., $(v^+)_i=\max\{0,v_i\}$ and $(v^-)_i=\max\{0,-v_i\}$ are nonnegative vectors with $v=v^+-v^-$.

%%%%%%%%%%%%%%%%
\paragraph{Submodular set functions.}
%%%%%%%%%%%%%%%%

Given a ground set $J$, a \emph{set algebra} $(J,\cB)$ is a family of subsets of $J$ that is closed under taking complements and finite unions. When $J$ is finite, we always assume that $\cB=2^J$. By a slight abuse of notation, we call the members of $\cB$ \emph{measurable}. A set function $\varphi \colon \cB \to \bR$ is \emph{increasing} if $X\subseteq Y$ implies $\varphi(X)\leq\varphi(Y)$, and \emph{decreasing} if it satisfies the reverse inequality. The function is \emph{bounded} if there exists $b\in\bR$ such that $-b\leq \varphi(X)\leq b$ for every $X\in\cB$. 
We set $\|\varphi\|=\sup\{|\varphi(X)|\mid X\in\cB\}$. The set function $\varphi$ is called \emph{submodular} if \[\varphi(X)+\varphi(Y) \geq \varphi(X \cap Y)+\varphi(X \cup Y)\] for all $X,Y \in \cB$. If $J$ is finite and $\cB=2^J$, then this is equivalent to $\varphi(X+u) + \varphi(X+v) - \varphi(X) - \varphi(X+u+v) \ge 0$ for all $X \subseteq V$, $u,v \in V\setminus X$ and $u \ne v$. We say that $\varphi$ is \emph{supermodular} if $-\varphi$ is submodular, and \emph{modular} if it is both sub- and supermodular. We say that the function is \emph{modular on a pair $A,B\in\cB$} if $\varphi(A)+\varphi(B)=\varphi(A\cap B)+\varphi(A\cup B)$. Note that shifting $\varphi$ by a constant preserves these properties hence we can usually assume that $\varphi(\emptyset)=0$. We call $\varphi$ \emph{normalized} if $\varphi(J)=1$. If $\cQ = \{Q_1,\dots, Q_q\}$ is a finite \emph{measurable partition} of $J$, that is, each $Q_i$ is measurable, then $\varphi/\cQ\colon 2^{[q]} \to \bR$ denotes the map defined by $(\varphi/\cQ)(X) = \varphi(\cup_{i \in X} Q_i)$ for $X \subseteq [q]$. We use $\cB_\cQ\coloneqq\{A\in\cB\mid A=\cup_{i\in I}Q_i\ \text{for some $I\subseteq [q]$}\}$. Adopting the terminology of \cite{rao1983theory}, we refer to a (not necessarily nonnegative) finitely additive measure as a \emph{charge}. When we say ``measure'', we tacitly mean a countably additive measure. So a measure is a countably additive nonnegative charge. We denote the \emph{Lebesgue measure} by $\lambda$.

%%%%%%%%%%%%%%%%
\paragraph{Monotonic decompositions.}
%%%%%%%%%%%%%%%%

Let $(J,\cB)$ be a set algebra and $\varphi \colon \cB \to \bR$ be a set function with $\varphi(\emptyset)=0$. A \emph{monotonic sum-decomposition} means writing up $\varphi$ as a sum $\varphi=\varphi_1+\varphi_2$ where $\varphi_1$ and $\varphi_2$ are increasing and decreasing submodular set functions, respectively, with $\varphi_1(\emptyset)=\varphi_2(\emptyset)=0$. A \emph{monotonic diff-decomposition} of $\varphi$ means writing up $\varphi$ as a difference $\varphi=\varphi_1-\varphi_2$ where both $\varphi_1$ and $\varphi_2$ are increasing submodular with $\varphi_1(\emptyset)=\varphi_2(\emptyset)=0$. By a \emph{monotonic decomposition}, we mean either a monotonic sum- or diff-decomposition. Note that $\varphi_1\geq \varphi$ necessarily holds in both cases. Using this terminology, the two questions posed in the introduction ask for the existence of monotonic sum- and diff-decompositions of any bounded submodular set function. For some $c\in\bR_+$, we call a monotonic decomposition \emph{$c$-bounded} if $\|\varphi_i\|\leq c\cdot\|\varphi\|$ for $i=1,2$. If $\varphi$ has finite domain then this is equivalent to 
\begin{equation*}
   \max_{\substack{X\in\cB\\ i\in[2]}}|\varphi_i(X)|\leq c\cdot\max_{X\in\cB}|\varphi(X)|.
\end{equation*}
We use $\|\varphi\|_+=\min\{\varphi_1(J)\mid \varphi=\varphi_1+\varphi_2\ \text{is a monotonic sum-decomposition}\}$, $\|\varphi\|_-=\min\{\varphi_1(J)\mid \varphi=\varphi_1-\varphi_2\ \text{is a monotonic diff-decomposition}\}$ and call a monotonic decomposition \emph{optimal} if $\varphi_1(J)$ attains the optimum value in the corresponding minimum.

As a warm-up, let us show why sum- and diff-decompositions exists in the finite case. For any sufficiently large $c>0$, the set function $\varphi_1(X)=\varphi(X)+c|X|$ is increasing submodular while $\varphi_2(X)=-c|X|$ is decreasing modular, hence $\varphi=\varphi_1+\varphi_2$ and $\varphi=\varphi_1-(-\varphi_2)$ provide sum- and diff-decompositions, respectively. In fact, we will see that diff-decompositions exist in a much stronger form: any set function $\varphi$ over a finite set can be written as the difference of two increasing submodular set functions that are infinite-alternating, see Theorem~\ref{thm:finite_diff-decomp}. Another decomposition, which generalizes to strongly bounded submodular set functions on infinite sets, will be described in Section \ref{SEC:SBOUNDED}. Let us note that this construction does not provide any bound in terms of the maximum value of $\varphi$ on the values of the functions appearing in the decomposition. 

As an example of a monotonic sum-decomposition in the infinite case, let $h\colon [0,1]\to\bR_+$ be a real-valued function with $h(0)=0$. It is easy to show (see e.g.~Denneberg \cite[Example 2.1]{Denn}) that the set function $\varphi(X)=h(\lambda(X))$ is submodular on the Borel subsets of $[0, 1]$ if and only if $h$ is concave. Assume now that $h$ is concave, and thus $\varphi$ is submodular. We claim that $\varphi$ admits a $1$-bounded monotonic sum-decomposition. If $h$ is increasing or decreasing, then the same holds for $\varphi$ and the problem is trivial. Otherwise, let $x_0=\argmax(h)$. We define $h_1(x)=h(x)$ if $x\leq x_0$ and $h_1(x)=h(x_0)$ otherwise, and $h_2(x)=0$ if $x\leq x_0$ and $h_2(x)=h(x)-h(x_0)$ otherwise. Note that $h_1$ and $h_2$ are concave, $h_1$ is increasing, $h_2$ is decreasing, and $h=h_1+h_2$. So $\varphi_1(X)=h_1(\lambda(X))$ and $\varphi_2(X)=h_2(\lambda(X))$ are submodular, $\varphi_1$ is increasing, $\varphi_2$ is decreasing, and $\varphi=\varphi_1+\varphi_2$. Since $\max\{\|\varphi_1\|,\|\varphi_2\|\}=\max\{h(x_0),|h(1)-h(x_0)|\}=h(x_0)=\|\varphi\|$ by the definition of $x_0$, the decomposition $\varphi=\varphi_1+\varphi_2$ is $1$-bounded. 

\paragraph{Hypergraph and graphs.}  Let $H=(V,\cE)$ be a hypergraph and $w\colon \cE\to\bR$ be a weight function. For a subset $\cE' \subseteq \cE$, we use the notation $w(\cE')\coloneqq \sum_{F \in \cE'} w(F)$. A hyperedge is said to \emph{connect} two disjoint subsets $X$ and $Y$ of $V$ if it intersects each of $X$ and $Y$. For two arbitrary subsets $X,Y\subseteq V$, we denote by $\cE[X,Y]$ the set of hyperedges connecting $X\setminus Y$ and $Y\setminus X$. The set function $d_{H, w}$ defined by $d_{H,w}(X) \coloneqq w(\cE[X, V\setminus X])$ is called the \emph{weighted cut function of $H$ with respect to $w$}.  
We denote by $\cE[X]$ \emph{the set of hyperedges induced by $X$}, that is, hyperedges $F \in \cE$ with $F \subseteq X$. When $H$ is a graph, i.e., each hyperedge has size $2$, we use similar notation with $E$ in place of $\cE$, and we denote the subgraph $(X, E[X])$ by $H[X]$. We use the notations $i_{H,w}(X) = w(\cE[X])$ and $e_{H,w}(X) = i_{H,w}(X) + d_{H,w}(X)$.
We dismiss the subscript $H$ whenever the hypergraph is clear from the context. It is not difficult to verify that 
\begin{align}
d_w(X)+d_w(Y)&=d_w(X\cap Y)+d_w(X\cup Y)+w(\cE[X,Y]\cap \cE[X \cup Y]) + w(\cE[X,Y]\cap \cE[V\setminus (X\cap Y)]), \label{eq:cut}\\
i_w(X)+i_w(Y)&=i_w(X\cap Y)+i_w(X\cup Y)-w(\cE[X,Y] \cap \cE[X \cup Y]),\ \text{and}\label{eq:i}\\    
e_w(X)+e_w(Y)&=e_w(X\cap Y)+e_w(X\cup Y)+ w(\cE[X,Y]\cap \cE[V\setminus (X\cap Y)])\label{eq:e}    
\end{align}
for all $X,Y\subseteq V$. Indeed, to prove any of the equalities, one has to check that every hyperedge of $H$ has the same contribution to both sides. In particular, if $w$ is nonnegative, then $d_w$ and $e_w$ are submodular while $i_w$ is supermodular. As a special case of hypergraphs, we consider undirected graphs $G=(V,E)$.

%%%%%%%%%%%%%%%%
\paragraph{Matroids.}
%%%%%%%%%%%%%%%%

For basic definitions on matroids, we refer the interested reader to~\cite{oxley2011matroid}. A \emph{matroid} $M=(J,r)$ is defined by its finite \emph{ground set} $J$ and its \emph{rank function} $r\colon2^J\to\bZ_+$ that satisfies the \emph{rank axioms}: (R1) $r(\emptyset)=0$, (R2) $X\subseteq Y\Rightarrow r(X)\leq r(Y)$, (R3) $r(X)\leq |X|$, and (R4) $r(X)+r(Y)\geq r(X\cap Y)+r(X\cup Y)$. Note that axiom (R4) asserts submodularity of the rank function. A set $X\subseteq J$ is called \emph{independent} if $r(X)=|X|$ and \emph{dependent} otherwise. An inclusionwise minimal dependent set is called a \emph{circuit}, and a \emph{loop} is a circuit of size $1$. Given pairwise disjoint sets $J_1\cup\dots\cup J_q\subseteq J$, the corresponding \emph{partition matroid} $M=(J,r)$ is defined by setting $r(X)=|\{i\mid X\cap J_i\neq\emptyset\}|$ for all $X\subseteq J$\footnote{Partition matroids could be defined in a more general form where $r(X)=\sum_{i=1}^q\min\{|X\cap J_i|,g_i\}$ for some $g_1,\dots,g_q\in\bZ_+$. However, in this paper, we are interested only in the case when $g_i=1$ for all $i\in[q]$ except for at most one which is $0$.}.

%%%%%%%%%%%%%%%%%%%%%%%%%%%%%%%%
\section{Strongly bounded set functions}
\label{SEC:SBOUNDED}
%%%%%%%%%%%%%%%%%%%%%%%%%%%%%%%%

In this section we describe two simple monotonic sum-decompositions, which work under a strong boundedness condition.

\subsection{Upper bounding charges}

Let $\varphi$ be a nonnegative submodular set function on a set algebra $(J,\cB)$ with $\varphi(\emptyset)=0$. We define $\obt\varphi\colon \cB\to\R\cup\{\infty\}$ by 
\begin{equation}\label{EQ:MU-SUP-FG}
\obt\varphi(X) = \sup \sum_{i=1}^n \varphi(X_i),
\end{equation}
where $\{X_1,\ldots,X_n\}$ ranges over all finite measurable partitions of $X$. We define $\obt\varphi(\emptyset)=0$. The set function $\obt\varphi$ as defined may take infinite values. We call the set function $\varphi\ge 0$ {\it strongly bounded} if $\obt\varphi$ is finite.

Let us recall some properties of strongly bounded set functions and the function $\obt{\varphi}$ from~\cite{lovasz2023submodular}. Considering the partition of $X$ into a single class, we see that $\obt\varphi\ge\varphi$. If $\varphi$ is a nonnegative charge, then $\obt\varphi=\varphi$; hence every nonnegative charge is strongly bounded. If $\varphi$ is a submodular set function, refining the partition $\{X_1,\dots,X_n\}$ does not decrease the value of $\sum_i \varphi(X_i)$. In particular, if the underlying set is finite, we have
\[
\obt\varphi(X)=\sum_{x\in X} \varphi(x).
\]
It is easy to see that both $\obt\varphi$ and $\obt\varphi-\varphi$ are increasing. If $\varphi\ge 0$ is a strongly bounded submodular set function on $(J,\cB)$ with $\varphi(\emptyset)=0$, then the set function $\obt\varphi$ is a nonnegative charge. Furthermore, if $\alpha\colon \cB\to\R_+$ is a nonnegative charge such that $\varphi\le\alpha$, then it follows easily that $\obt\varphi\le\alpha$. This implies that a nonnegative modular set function $\varphi$ with $\varphi(\emptyset)=0$ is strongly bounded if and only if there exists a nonnegative charge $\alpha$ on $(J,\cB)$ such that $\varphi\le\alpha$. If $\varphi$ is strongly bounded, then $\obt\varphi$ is the \emph{unique smallest nonnegative charge majorizing $\varphi$}. If, in addition to submodularity, $\varphi$ is continuous from below, then $\obt\varphi$ is a measure, implying that $\varphi$ is also continuous from above; see e.g.~\cite{lovasz2023submodular} for the precise definitions.

    To sum up, every strongly bounded submodular set function $\varphi$ with $\varphi=0$ has the decomposition
    \[
    \varphi = \obt\varphi + (\varphi-\obt\varphi)
    \]
    into the sum of a nonnegative charge and a decreasing submodular set function. Furthermore $\obt\varphi$ is the unique smallest nonnegative charge that can occur in such a decomposition.

\subsection{Duality and lower bounding charges}

Let us fix a charge $\eta$ on $(J,\cB)$, and let $\msf(\eta)$ denote the \emph{set of all increasing submodular set functions} $\varphi$ on $(J,\cB)$ with $\varphi(\emptyset)=0$ and $\varphi\le\eta$. Recall that the condition $\varphi\le\eta$ is equivalent to $\obt\varphi\le\eta$. For $\varphi\in\msf(\eta)$, we define its {\it dual with respect to $\eta$} as the set function
\[
\varphi^{\eta*}(X)= \varphi(J\setminus X) + \eta(X) - \varphi(J).
\]
This formula generalizes the formula for the rank function of the dual of a finite matroid, where $\eta$ is the counting measure. Note that $\eta\in\msf(\eta)$ and $\eta^{\eta*}=0$.

It is trivial that $\varphi^{\eta*}$ is submodular and $\varphi^{\eta*}(\emptyset)=0$. Furthermore, $\varphi^{\eta*}$ is increasing. The monotonicity of $\varphi$ implies that $\varphi^{\eta*}(X)\le\eta(X)$, so $\varphi^{\eta*}\in\msf(\eta)$. Hence $\eta$ is a charge majorizing $\varphi^{\eta*}$, which implies
\begin{equation}\label{EQ:ENVELOP}
\obt{\varphi^{\eta*}}\le\eta.
\end{equation}
For a given submodular set function $\varphi$, it is natural to consider $\eta=\obt\varphi$, and to define its {\it canonical dual} as
\begin{equation*}\label{EQ:CAN-DUAL1}
\varphi^*(X)= \varphi(J\setminus X) + \obt\varphi(X) - \varphi(J).
\end{equation*}
If $\varphi$ is a charge, then $\obt\varphi=\varphi$ and hence $\varphi^*=0$. So the canonical dual measures, in a sense, how non-modular $\varphi$ is. Also, $\varphi^{**}\coloneqq(\varphi^*)^*=0$, so $\varphi^{**}\not=\varphi$  as $\obt{\varphi^*}=0$. 

\begin{thm}
Let $(J,\cB)$ be a set algebra, and let $\varphi$ be an increasing submodular set function on $\cB$ with $\varphi(\emptyset)=0$. Then, $\ubt\varphi=\varphi-\varphi^{**}$ is a nonnegative charge. Furthermore, $\ubt\varphi$ is the unique largest charge $\alpha$ such that $\varphi-\alpha$ is increasing.
\end{thm}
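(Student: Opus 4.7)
My plan is to first derive a compact closed-form expression for $\varphi^{**}$ and then use it to verify both claims. Expanding
\[
\varphi^{**}(X) = \varphi^*(J\setminus X) + \obt{\varphi^*}(X) - \varphi^*(J)
\]
by substituting the definition of $\varphi^*$ three times, and using that $\obt\varphi$ is a charge (so $\obt\varphi(J\setminus X) = \obt\varphi(J) - \obt\varphi(X)$), the $\varphi(J)$ and $\obt\varphi(J)$ terms cancel and I obtain
\[
\varphi^{**}(X) = \varphi(X) + \obt{\varphi^*}(X) - \obt\varphi(X),
\]
so that $\ubt\varphi(X) = \obt\varphi(X) - \obt{\varphi^*}(X)$. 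Note that $\varphi^* \in \msf(\obt\varphi)$, so in particular $\varphi^* \le \obt\varphi$, which shows that $\varphi^*$ is strongly bounded and $\obt{\varphi^*}$ is a well-defined nonnegative charge.

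From this formula, the first claim becomes nearly immediate: $\ubt\varphi$ is the difference of two nonnegative charges and hence itself a charge, while its nonnegativity is exactly inequality~\eqref{EQ:ENVELOP} applied with $\eta = \obt\varphi$. Moreover, $\varphi - \ubt\varphi = \varphi^{**}$ is increasing, since it is the canonical dual of the increasing submodular function $\varphi^*$, and duals of members of $\msf(\cdot)$ were already shown to be increasing earlier in the section.

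For the maximality statement, I would take an arbitrary charge $\alpha$ with $\psi \coloneqq \varphi - \alpha$ increasing. Since $\alpha$ is modular, $\psi$ is submodular with $\psi(\emptyset)=0$, so $\psi$ itself satisfies the hypotheses of the theorem. The crucial computation is that $\psi^* = \varphi^*$: for any partition $\{X_i\}$ of $X$ one has $\sum_i\psi(X_i) = \sum_i\varphi(X_i) - \alpha(X)$ by the modularity of $\alpha$, so $\obt\psi(X) = \obt\varphi(X) - \alpha(X)$; substituting into the definition of $\psi^*$, all $\alpha$-terms cancel. Consequently $\psi^{**} = \varphi^{**}$, and therefore $\ubt\psi = \psi - \psi^{**} = \ubt\varphi - \alpha$. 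Applying the (already proved) first half of the theorem to $\psi$ yields $\ubt\psi \ge 0$, i.e., $\alpha \le \ubt\varphi$. Uniqueness of the maximum in the pointwise order is then automatic.

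The main insight — and what I expect to be the sole conceptual hurdle — is recognizing the invariance $(\varphi - \alpha)^* = \varphi^*$ of the canonical dual under shifts by a charge. Once this is spotted, the maximality half of the theorem collapses into the observation $\ubt\psi = \ubt\varphi - \alpha$ and is reduced to the first half applied to $\psi$; everything else is routine unpacking of definitions and appeals to the charge properties of $\obt{\cdot}$ already established in the section.
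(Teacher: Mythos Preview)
Your proof is correct. The derivation of the formula $\ubt\varphi = \obt\varphi - \obt{\varphi^*}$ and the verification that this is a nonnegative charge coincide with the paper's argument essentially line for line.

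For the maximality half, however, you take a genuinely different route. The paper proceeds by first noting (via submodularity) that $\varphi-\alpha$ is increasing if and only if $\varphi(X)-\alpha(X)\le\varphi(J)-\alpha(J)$ for all $X$, and then uses this inequality directly to bound
\[
\varphi^*(X)=\varphi(J\setminus X)+\obt\varphi(X)-\varphi(J)\le \obt\varphi(X)-\alpha(X),
\]
so that $\obt\varphi-\alpha$ is a charge majorizing $\varphi^*$ and hence $\obt{\varphi^*}\le\obt\varphi-\alpha$, i.e.\ $\alpha\le\ubt\varphi$. Your argument instead isolates the invariance $(\varphi-\alpha)^*=\varphi^*$ of the canonical dual under charge shifts, deduces $\ubt{(\varphi-\alpha)}=\ubt\varphi-\alpha$, and then applies the already-proved first half to $\psi=\varphi-\alpha$ (which is indeed nonnegative, increasing, submodular, and strongly bounded with $\obt\psi=\obt\varphi-\alpha$). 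The paper's computation is slightly shorter and self-contained; your approach is more structural and has the pleasant byproduct of exhibiting $\ubt{\,\cdot\,}$ as ``equivariant'' under charge translation, which explains conceptually why a largest minorizing charge must exist.
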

\begin{proof}
In general, we have
\begin{align*}
\varphi^{**}(X) & =\varphi^*(J\setminus X)+\obt{\varphi^*}(X)-\varphi^*(J) \\
& = \varphi(X)+\obt\varphi(J\setminus X)-\varphi(J)+\obt{\varphi^*}(X)-\varphi^*(J) \\
& = \varphi(X) + \left(\obt\varphi(J)-\obt\varphi(X)\right)-\varphi(J) + \obt{\varphi^*}(X)-\left(\obt\varphi(J)-\varphi(J)\right) \\
&= \varphi(X) - \obt\varphi(X) + \obt{\varphi^*}(X)
\end{align*}
and hence
\[\varphi(X)-\varphi^{**}(X) = \obt\varphi(X)-\obt{\varphi^*}(X).\]
Recalling \eqref{EQ:ENVELOP}, we see that $\ubt\varphi=\varphi-\varphi^{**}$ is a nonnegative charge. For the second half, first observe that $\varphi-\ubt\varphi$ is increasing. Indeed, this follows from the monotonicity of $\varphi^{**}$. By the submodularity of $\varphi$, it is not difficult to see that $\varphi-\alpha$ is increasing for a charge $\alpha$ if and only if $\varphi(X)-\alpha(X)\le\varphi(J)-\alpha(J)$ holds for every $X\in\cB$. Let $\alpha$ be a charge satisfying this condition, then
\begin{align*}
\varphi^*(X) 
{}&{}= \varphi(J\setminus X)+\obt\varphi(X)-\varphi(J)\\
{}&{}\le \alpha(J\setminus X)+\varphi(J)-\alpha(J) +\obt\varphi(X) -\varphi(J)\\
{}&{}= \obt\varphi(X) -\alpha(X).
\end{align*}
So $\obt\varphi-\alpha$ is a charge majorizing $\varphi^*$, which implies that $\obt\varphi-\alpha\ge\obt{\varphi^*}$. By $\varphi(X)-\varphi^{**}(X) = \obt\varphi(X)-\obt{\varphi^*}(X)$, this is equivalent to $\alpha\le\ubt\varphi$.
\end{proof}

\begin{rem}\label{REM:STRONG-MAP}
In \cite{lovasz2023submodular}, a pair of submodular set functions $(\varphi,\psi)$ on the same set algebra is called {\it diverging}, if $\varphi-\psi$ is increasing. It was pointed out that this is a direct generalization of the matroid theoretical notion that the identity map $J\to J$ is a {\it strong map} $\varphi\to\psi$. So the identity map is strong both as $\obt\varphi\to\varphi$ and $\varphi\to\ubt\varphi$. Furthermore, $\obt\varphi$ is the unique smallest modular set function $\alpha$ on $J$ for which the identity map $\alpha\to\varphi$ is strong, and $\ubt\varphi$ is the unique largest modular set function $\beta$ on $J$ for which the identity map $\varphi\to\beta$ is strong.
\end{rem}

%%%%%%%%%%%%%%%%%%%%%%%%%%%%%%%%
\section{Alternating set functions}
\label{sec:alternating}
%%%%%%%%%%%%%%%%%%%%%%%%%%%%%%%%

The aim of this section is to give a better understanding of a highly structured but still rich class of submodular set functions. In Section~\ref{sec:kalternating}, we overview the notion of alternating set functions originally introduced by Choquet~\cite{choquet1954theory}, and prove some of their basic properties. Some of these results were already known before, but since they were mostly implicit in Choquet's work, we prove those here for the sake of completeness of the paper. Furthermore, we show that, in the finite case, these functions coincide with coverage functions, an important special case of submodular functions that arise in many applications. We then initiate the study of a weakening of the $k$-alternating property and establish a link between the two definitions in Section~\ref{sec:weaklykalternating}. To put the proposed class of functions into context, we describe some basic examples in Section~\ref{sec:hypergraph}. Finally, we show that any set function over a finite domain can be written as the difference of two infinite-alternating set functions.

%%%%%%%%%%%%%%%%%%%%%%%%%%%%%%%%
\subsection{\texorpdfstring{$k$}{k}-alternating set functions}
\label{sec:kalternating}
%%%%%%%%%%%%%%%%%%%%%%%%%%%%%%%%

Let $(J,\cB)$ be a set algebra and $\varphi\colon\cB\to\bR$ be a set function. For a positive integer $k$ and sets $A_0,A_1,\dots,A_k\in\cB$, we denote $V_\varphi(A_0;A_1,\dots,A_k)\coloneqq\sum_{K \subseteq [k]} (-1)^{|K|}\varphi\bigl(A_0 \cup \bigcup_{i \in K} A_i\bigr)$ and call the function \emph{$k$-alternating} if $\varphi(\emptyset)=0$ and
\begin{equation}
V_\varphi(A_0;A_1,\dots,A_k)\leq 0 \tag{\textsc{$k$-Alt}}\label{eq:kalt}
\end{equation}
for all $A_0, A_1, \ldots, A_k \in \cB$. If $\varphi$ is $k$-alternating for all positive integer $k$, then it is called \emph{infinite-alternating}.

\begin{lem}\label{lem:nonempty}
  We get an equivalent definition if we restrict $A_0$ to be disjoint from $\bigcup_{i=1}^k A_i$. Furthermore, it suffices to consider sets $A_j \neq \emptyset$ for all $j>0$. 
\end{lem}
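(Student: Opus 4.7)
The plan is to handle the two claims by two elementary reductions, both of which leave $V_\varphi$ unchanged or force it to vanish.

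For the first claim (disjointness of $A_0$), given arbitrary $A_0,A_1,\dots,A_k\in\cB$, I would set $B_i \coloneqq A_i\setminus A_0$ for $i\in[k]$ and observe that for every $K\subseteq[k]$
\[
A_0\cup\bigcup_{i\in K}A_i \;=\; A_0\cup\bigcup_{i\in K}B_i,
\]
so $V_\varphi(A_0;A_1,\dots,A_k)=V_\varphi(A_0;B_1,\dots,B_k)$. Since $A_0$ is disjoint from $\bigcup_{i=1}^k B_i$, the restricted version of \eqref{eq:kalt} applied to $(A_0;B_1,\dots,B_k)$ yields the general inequality.

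For the second claim (nonemptiness of $A_j$ with $j>0$), suppose some $A_j=\emptyset$, say $j=1$ by symmetry. I would pair each $K\subseteq[k]$ with $1\notin K$ with $K\cup\{1\}$: since $A_1=\emptyset$, the two corresponding sets $A_0\cup\bigcup_{i\in K}A_i$ and $A_0\cup\bigcup_{i\in K\cup\{1\}}A_i$ coincide, but their signs $(-1)^{|K|}$ and $(-1)^{|K|+1}$ are opposite. Hence every pair cancels and $V_\varphi(A_0;A_1,\dots,A_k)=0$, so \eqref{eq:kalt} holds trivially.

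Combining the two reductions, an arbitrary instance is first replaced by $(A_0;B_1,\dots,B_k)$ with $A_0$ disjoint from $\bigcup_{i=1}^k B_i$; if some $B_j$ is empty the second observation gives $V_\varphi=0$, and otherwise the restricted version of \eqref{eq:kalt} applies directly. There is no real obstacle here: the only small care needed is to ensure that after the first reduction the new sets $B_i$ still lie in $\cB$, which is immediate since $\cB$ is a set algebra and thus closed under set difference.
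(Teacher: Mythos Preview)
Your proof is correct and follows essentially the same approach as the paper: replace each $A_i$ by $A_i\setminus A_0$ to handle the disjointness reduction, and use the pairing $K\leftrightarrow K\cup\{j\}$ to show $V_\varphi$ vanishes when some $A_j=\emptyset$. Your version is slightly more detailed (explicitly noting that $B_i\in\cB$ and that the two reductions combine cleanly), but the argument is the same.
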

\begin{proof}
For the first half of the lemma, observe that the left-hand side of \eqref{eq:kalt} does not change if we replace $A_i$ with $A_i \setminus A_0$ for $i=1,\dots,k$. For the second half, observe that $A_j=\emptyset$ implies that $A_0 \cup \bigcup_{i \in K} A_i=A_0 \cup \bigcup_{i \in K+j} A_i$ for all $K \subseteq [k]-j$, hence the inequality holds with equality as the terms of the sum cancel each other out pairwise.    
\end{proof}

For fixed sets $A_0, A_1, \ldots,  A_k$, the definition implies that $V_\varphi \coloneqq V_\varphi(A_0;A_1, \ldots , A_k)$ is a linear function of $\varphi$, i.e., $V_{\varphi+\psi}=V_\varphi+V_\psi$ and $V_{c \cdot \varphi}=c \cdot V_\varphi$ for arbitrary functions $\varphi$, $\psi$ and $c \in \bR$. Furthermore, we have 
\begin{equation}
    V_\varphi(A_0; A_1, \ldots , A_k)=V_\varphi(A_0; A_1, \ldots, A_{k-1})-V_\varphi(A_0 \cup A_k; A_1, \ldots , A_{k-1}),
    \label{eq:diminishing}
\end{equation}
i.e., the $k$-alternating sum can be expressed as the difference of two $(k-1)$-alternating sums. For $k=2$, inequality \eqref{eq:kalt} for pairwise disjoint $A_0, A_1, A_2$ is called the diminishing returns property of submodular set functions.

\begin{rem} \label{rem:discrete derivative}
    If $J$ is finite and $\varphi\colon 2^J \to \bR$ is a set function depending only on the cardinality, i.e., $\varphi(X)=g(|X|)$ for some function $g\colon\bZ_+ \to \bR$, then \eqref{eq:diminishing} can be considered as a generalization of discrete derivatives \cite{gleich2005finite}. For a function $f\colon \bZ_+ \to \bR$, let $\Delta f(n)\coloneqq \Delta^1 f(n)\coloneqq f(n+1)-f(n)$, and for $k\geq 2$ define $\Delta^k f$ recursively as $\Delta^k f\coloneqq \Delta(\Delta^{k-1} f)$. By induction, if $|A_0|=n$ and $a_1, a_2, \ldots , a_k$ are distinct elements of  $J \setminus A_0$, then equation \eqref{eq:diminishing} gives
    $V_\varphi(A_0; \{a_1\}, \ldots ,\{a_k\})=(-1)^k\Delta^k g(n)$.        
\end{rem}

The definition is monotone in the sense that a $k$-alternating set function is also $\ell$-alternating for any $1\leq \ell\leq k$. 

\begin{lem} \label{lem:alternating monotonicity}
    If $\varphi$ is $k$-alternating, then it is also $\ell$-alternating for all $\ell\in[k]$.
\end{lem}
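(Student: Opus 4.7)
The plan is to prove the single inductive step ``$k$-alternating $\Rightarrow$ $(k-1)$-alternating''; iterating this $k-\ell$ times then yields the claim for every $\ell \in [k]$. The crucial observation is that the defining condition \eqref{eq:kalt} places no restriction (no disjointness, no nonemptiness) on the arguments $A_0,A_1,\ldots,A_k$, so I am free to substitute repetitions of sets.

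Concretely, given $A_0,A_1,\dots,A_{k-1}\in\cB$, I would apply the $k$-alternating inequality to the tuple obtained by duplicating the last entry, i.e.\ set $A_k\coloneqq A_{k-1}$, yielding $V_\varphi(A_0;A_1,\dots,A_{k-1},A_{k-1})\le 0$. The whole argument then reduces to verifying the identity
\[
V_\varphi(A_0;A_1,\dots,A_{k-1},A_{k-1}) \;=\; V_\varphi(A_0;A_1,\dots,A_{k-1}).
\]
To establish this identity, I would split the subsets $K\subseteq[k]$ appearing in $V_\varphi(A_0;A_1,\dots,A_{k-1},A_{k-1})$ according to the pattern $S\coloneqq K\cap\{k-1,k\}$, writing $K=K'\cup S$ with $K'\subseteq[k-2]$. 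Because $A_k=A_{k-1}$, the set $A_0\cup\bigcup_{i\in K}A_i$ depends only on $K'$ and on whether $S$ is empty or not, so for fixed $K'$ the three nonempty choices of $S$ contribute signs summing to $(-1)^{|K'|}(-1-1+1)=-(-1)^{|K'|}$ times $\varphi(A_0\cup A_{k-1}\cup\bigcup_{i\in K'}A_i)$, while the $S=\emptyset$ case contributes $(-1)^{|K'|}\varphi(A_0\cup\bigcup_{i\in K'}A_i)$. Summing over $K'\subseteq[k-2]$ collapses the total to $V_\varphi(A_0;A_1,\dots,A_{k-2})-V_\varphi(A_0\cup A_{k-1};A_1,\dots,A_{k-2})$, which by the recursion~\eqref{eq:diminishing} is precisely $V_\varphi(A_0;A_1,\dots,A_{k-1})$.

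I do not anticipate any real obstacle: the whole argument is a single substitution combined with the telescoping identity already recorded as~\eqref{eq:diminishing}. The only mildly delicate point is the sign bookkeeping in the four-case split, which I would present on one line by noting the cancellation $-1-1+1=-1$. Once the displayed identity is in hand, the $(k-1)$-alternating inequality is immediate from the $k$-alternating hypothesis applied to the duplicated tuple.
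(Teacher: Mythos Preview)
Your proposal is correct and follows essentially the same approach as the paper: duplicate the last argument by setting $A_k\coloneqq A_{k-1}$ and verify the identity $V_\varphi(A_0;A_1,\dots,A_{k-1},A_{k-1})=V_\varphi(A_0;A_1,\dots,A_{k-1})$. The paper obtains this identity by applying the recursion~\eqref{eq:diminishing} twice (splitting off $A_k$ and then $A_{k-1}$) and observing that two of the four resulting terms cancel, whereas you group the terms directly by $K\cap\{k-1,k\}$; the computations are the same up to presentation.
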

\begin{proof}
    It suffices to verify the statement for $\ell=k-1$. Let $A_0,A_1,\dots,A_{k-1}\in\cB$ be arbitrary and set $A_k=A_{k-1}$. Then, we get
    \begin{align*}
    0
    {}&{}\geq
    V_\varphi(A_0; A_1, \ldots , A_k)\\
    {}&{}= 
    V_\varphi(A_0; A_1, \ldots , A_{k-2})-V_\varphi(A_0 \cup A_{k-1}; A_1, \ldots , A_{k-2})\\
    {}&{}-V_\varphi(A_0 \cup A_k; A_1, \ldots , A_{k-2})+V_\varphi(A_0 \cup A_{k-1}\cup A_k; A_1, \ldots , A_{k-2})
    \\
    {}&{}=
    V_\varphi(A_0; A_1, \ldots , A_{k-1})
    \end{align*}
    concluding the proof of the lemma.
\end{proof}

For $k=1$ and $k=2$, we get back the notion of increasing and increasing submodular set functions, respectively.

\begin{lem} \label{lem:12-alternating}
    A set function $\varphi$ is $1$-alternating if and only if it is increasing. A set function $\varphi$ is $2$-alternating if and only if it is increasing and submodular.
\end{lem}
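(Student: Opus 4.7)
The plan is to verify the two parts of the lemma by directly expanding the defining sum $V_\varphi(A_0; A_1, \dots, A_k)$ for the relevant values of $k$, and then translating the resulting inequalities into the standard properties of set functions.

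For the case $k = 1$, expanding the defining sum gives $V_\varphi(A_0; A_1) = \varphi(A_0) - \varphi(A_0 \cup A_1)$. Requiring this to be nonpositive for all $A_0, A_1 \in \cB$ is exactly the statement that $\varphi(A_0) \leq \varphi(A_0 \cup A_1)$ holds for all such pairs; the forward implication is immediate, and given $X \subseteq Y$ the choice $A_0 = X$, $A_1 = Y$ yields the converse. Hence a set function is $1$-alternating if and only if it is increasing.

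For the case $k = 2$, expanding gives
\[
V_\varphi(A_0; A_1, A_2) = \varphi(A_0) - \varphi(A_0 \cup A_1) - \varphi(A_0 \cup A_2) + \varphi(A_0 \cup A_1 \cup A_2).
\]
Suppose first that $\varphi$ is $2$-alternating. Then \Cref{lem:alternating monotonicity} shows that $\varphi$ is also $1$-alternating, so by the first part $\varphi$ is increasing. To obtain submodularity, given arbitrary $X, Y \in \cB$, apply the inequality $V_\varphi \leq 0$ with $A_0 = X \cap Y$, $A_1 = X$, $A_2 = Y$, which yields $\varphi(X) + \varphi(Y) \geq \varphi(X \cap Y) + \varphi(X \cup Y)$.

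For the reverse direction of the $k = 2$ case, assume $\varphi$ is increasing and submodular, and fix arbitrary $A_0, A_1, A_2 \in \cB$. Submodularity applied to $A_0 \cup A_1$ and $A_0 \cup A_2$ (whose intersection is $A_0 \cup (A_1 \cap A_2)$ and whose union is $A_0 \cup A_1 \cup A_2$) gives
\[
\varphi(A_0 \cup A_1) + \varphi(A_0 \cup A_2) \geq \varphi(A_0 \cup (A_1 \cap A_2)) + \varphi(A_0 \cup A_1 \cup A_2),
\]
while monotonicity gives $\varphi(A_0 \cup (A_1 \cap A_2)) \geq \varphi(A_0)$; combining the two inequalities yields $V_\varphi(A_0; A_1, A_2) \leq 0$. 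The only place where real care is needed is this last step: submodularity alone produces a lower bound involving $A_0 \cup (A_1 \cap A_2)$ rather than $A_0$, and monotonicity is required precisely to close the gap. This is the only point in the argument where both hypotheses are invoked jointly.
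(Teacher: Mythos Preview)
Your proof is correct and follows essentially the same approach as the paper's: both use \Cref{lem:alternating monotonicity} to get monotonicity in the forward direction, both instantiate $A_0 = X \cap Y$ to extract submodularity, and both handle the reverse direction by applying submodularity to $A_0 \cup A_1$ and $A_0 \cup A_2$ followed by monotonicity to pass from $A_0 \cup (A_1 \cap A_2)$ down to $A_0$. The only cosmetic difference is that the paper first invokes \Cref{lem:nonempty} to assume $A_0$ is disjoint from $A_1, A_2$, whereas you work directly with arbitrary sets via the distributive law; your version is marginally cleaner since that reduction is not actually needed.
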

\begin{proof}
    The first part is immediate from the definition. 
    
    For the second part, assume that $\varphi$ is 2-alternating. Then, by \cref{lem:alternating monotonicity}, it is 1-alternating and thus increasing as well. Let $X,Y\in\cB$ and define $A_0\coloneqq X\cap Y$, $A_1\coloneqq X\setminus Y$, $A_2\coloneqq Y\setminus X$. Then, \eqref{eq:kalt} gives
    \begin{align*}
        &\varphi(X\cap Y)-\varphi(X)-\varphi(Y)+\varphi(X\cup Y)\\
        ={}&{}
        \varphi(A_0)-\varphi(A_0 \cup A_1)-\varphi(A_0 \cup A_2)+\varphi(A_0 \cup A_1 \cup A_2)\\
        \leq{}&{} 0,
    \end{align*}
    implying submodularity.

    Conversely, assume that $\varphi$ is increasing and submodular and let $A_0, A_1, A_2\in\cB$. We can assume that $A_0 \cap A_1=A_0 \cap A_2=\emptyset$ by \cref{lem:nonempty}. Then, we get 
    \begin{align*}
    & \varphi(A_0)-\varphi(A_0 \cup A_1)-\varphi(A_0 \cup A_2)+\varphi(A_0 \cup A_1 \cup A_2)\\
    \leq{}&{} 
    \varphi(A_0 \cup (A_1 \cap A_2))-\varphi(A_0 \cup A_1)-\varphi(A_0 \cup A_2)+\varphi(A_0 \cup A_1 \cup A_2)\\
    \leq{}&{} 0,
    \end{align*}
   where the first inequality holds since $\varphi$ is increasing, while the second inequality holds by the submodularity of $\varphi$ when applied to the sets $A_0 \cup A_1$ and $A_0 \cup A_2$.
\end{proof}

There is a long line of research focusing on the characterization of extreme rays of the convex cone of submodular set functions, see~\cite{studeny2016core} for an overview. For infinite-alternating set functions, there is a simple characterization which relies on the following construction. For a set algebra $(J,\cB)$ and for any $A \in \cB$, let $\varphi_A \colon \cB \to \bR$ be defined as
\begin{equation} 
   \varphi_A(X)=\begin{cases}
    0 &\text{if $X \cap A=\emptyset$,} \\
    1 &\text{if $X \cap A\neq\emptyset$.}
    \end{cases}
    \label{eq:extremal_def}
\end{equation}
It is not difficult to see that $\varphi_A$ is a normalized infinite-alternating set function. Choquet~\cite[Section 43]{choquet1954theory} showed that in the finite case these set functions correspond to the extreme rays of the convex cone of infinite-alternating set functions, while a different proof appeared in~\cite{lovasz2023submodular}.

\begin{prop}[Choquet]\label{prop:extremal}
    The set of normalized extremal elements of the convex cone of infinite-alternating set functions over a finite set $J$ is $\{\varphi_A\mid \emptyset\neq A\subseteq J\}$.
\end{prop}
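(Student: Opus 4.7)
The plan is to show that every set function $\varphi\colon 2^J\to\bR$ with $\varphi(\emptyset)=0$ has a unique expansion $\varphi=\sum_{\emptyset\neq A\subseteq J} c_A\varphi_A$ and that the nonnegativity of all coefficients $c_A$ is equivalent to $\varphi$ being infinite-alternating. Granted this, the cone of infinite-alternating set functions on $J$ is simplicial, generated by the linearly independent family $\{\varphi_A:\emptyset\neq A\subseteq J\}$; its extreme rays are therefore exactly the rays through the $\varphi_A$'s, and since each $\varphi_A$ is already normalized (as $A\neq\emptyset$ implies $\varphi_A(J)=1$), these are also the normalized extremal elements.

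To obtain the expansion, I start from $\varphi_A(X)=1-[A\cap X=\emptyset]$. For any coefficients $(c_A)$ and $X\subseteq J$ with $Y=J\setminus X$, this gives
\[
\sum_{A} c_A\varphi_A(X)=\sum_{A} c_A-\sum_{\emptyset\neq A\subseteq Y} c_A.
\]
Thus $\varphi=\sum_A c_A\varphi_A$ is equivalent to $\sum_{\emptyset\neq A\subseteq Y} c_A=\varphi(J)-\varphi(J\setminus Y)$ for every $Y\subseteq J$. This triangular system has a unique solution by Möbius inversion on the Boolean lattice, and since $\sum_{B\subseteq A}(-1)^{|A\setminus B|}=0$ for $A\neq\emptyset$, the $\varphi(J)$ terms cancel out, yielding
\[
c_A=-\sum_{B\subseteq A}(-1)^{|A\setminus B|}\varphi(J\setminus B).
\]

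The decisive step is to recognize the right-hand side in terms of $V_\varphi$. Substituting $B'=A\setminus B$ and writing $A=\{a_1,\dots,a_k\}$, the sum reindexes to
\[
c_A=-\sum_{B'\subseteq A}(-1)^{|B'|}\varphi\Bigl((J\setminus A)\cup B'\Bigr)=-V_\varphi\Bigl(J\setminus A;\{a_1\},\dots,\{a_k\}\Bigr).
\]
If $\varphi$ is infinite-alternating, then in particular the singleton-argument version of \eqref{eq:kalt} gives $c_A\ge 0$ for every $\emptyset\neq A\subseteq J$. Conversely, any nonnegative combination of the (infinite-alternating) $\varphi_A$'s is infinite-alternating, so the cone of infinite-alternating set functions coincides with $\{\sum_A c_A\varphi_A:c_A\ge 0\}$. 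The uniqueness of the expansion makes this cone simplicial, its extreme rays are exactly the rays spanned by the $\varphi_A$'s, and the normalized elements form a simplex whose vertices are the $\varphi_A$'s.

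I do not anticipate a serious obstacle: the whole argument is a Möbius inversion combined with the matching of its coefficients against $V_\varphi$ evaluated on singleton arguments. The only mildly delicate point is keeping the signs consistent in the reindexing $B\leftrightarrow A\setminus B$, and noting that although the infinite-alternating condition imposes inequalities for arbitrary measurable $A_0,A_1,\dots,A_k$, only the singleton case is needed to force nonnegativity of the Möbius coefficients.
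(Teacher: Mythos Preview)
Your argument is correct. The paper does not actually supply its own proof of this proposition; it attributes the result to Choquet and cites a different proof in \cite{lovasz2023submodular}. What the paper does provide is the linear-algebraic scaffolding in the first proof of Theorem~\ref{thm:finite_diff-decomp}: the matrix $\mathbf{P}$ with $\mathbf{P}_{X,Y}=\mathbbm{1}(X\cap Y\neq\emptyset)$ and its explicit inverse $\mathbf{N}_{X,Y}=(-1)^{|X\cap Y|-1}\mathbbm{1}(X\cup Y=J)$. Your M\"obius inversion is exactly the computation $\mathbf{N}=\mathbf{P}^{-1}$ written out entrywise; indeed your coefficient $c_A$ equals $(\mathbf{N}\varphi)(A)$.

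Where your proof adds something not made explicit in the paper's surrounding discussion is the identification $c_A=-V_\varphi(J\setminus A;\{a_1\},\dots,\{a_k\})$, which directly links the M\"obius coefficients to the defining inequalities \eqref{eq:kalt} and thereby closes the equivalence ``infinite-alternating $\Leftrightarrow$ all $c_A\ge 0$'' without appealing to the proposition itself. In the paper that equivalence is stated as a consequence of Proposition~\ref{prop:extremal} rather than as a proof of it, so your write-up is in fact self-contained in a way the paper's treatment is not. The remaining steps (linear independence of the $\varphi_A$, simpliciality of the cone, and the fact that each $\varphi_A$ is already normalized) are routine and match what the paper asserts in Remark~\ref{rem:extremal_unique}.
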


In the finite setting, Proposition~\ref{prop:extremal} provides an interesting connection between three combinatorial objects: infinite-alternating set functions, coverage functions, and rank functions of partition matroids. Assume that $J$ is finite and let $\varphi\colon 2^J\to\bR$ be a set function. Let $G=(J,2^J;E)$ be a bipartite graph where one vertex class corresponds to the elements of $J$, the other class corresponds to subsets of $J$, and there is an edge between $a\in J$ and $A\in 2^J$ if and only if $a\in A$. By Proposition~\ref{prop:extremal}, if $\varphi$ is infinite-alternating, then there exist nonnegative weights $\alpha_A\in\bR_+$ for $A\subseteq J$ such that $\varphi(X)=\sum[\alpha_A\mid A\in N(X)]$ for all $X\subseteq J$, where $N(X)$ denotes the set of neighbours of $X$ in $G$. Vice versa, it is not difficult to check that if $G=(J,U;E)$ is a bipartite graph and $\alpha_u\in\bR_+$ for $u\in U$, then $\varphi(X)\coloneqq \sum[\alpha_u\mid u\in N(X)]$ is an infinite-alternating set function over $J$. The latter construction may be familiar to those interested in combinatorial optimization. A \emph{coverage function} $f\colon 2^J\to\bR_+$ is determined by a universe $U$ of elements, weights $\alpha_u\in\bR_+$ for $u\in U$ and a family $\cU=\{U_a\mid a\in J\}$ of subsets of $U$ by setting $f(X)=\sum[\alpha_u\mid u\in \bigcup_{a\in X}U_a]$ for $X\subseteq J$. That is, $f$ is the infinite-alternating function corresponding to the bipartite graph $G=(J,U;E)$ with weights $\alpha_u\in\bR_+$ for $u\in U$, where $au$ is an edge if $u\in U_a$. Thus infinite-alternating functions over a finite domain are exactly the coverage functions. Finally, observe that $\varphi_A$ defined as in \eqref{eq:extremal_def} is the same as the rank function of the partition matroid with classes $A$ and $J\setminus A$ and bounds $1$ and $0$, respectively. Furthermore, the rank function of any partition matroid can be obtained as the sum of such functions. Therefore, by Proposition~\ref{prop:extremal}, the class of infinite-alternating set functions coincides with the convex cone of partition matroid rank functions.

\begin{rem} \label{rem:extremal_unique}
    It is not hard to prove that the functions $\{\varphi_A\mid \emptyset\neq A\subseteq J\}$ are linearly independent, see for example \cite{lovasz2023submodular} or the first proof of Theorem~\ref{thm:finite_diff-decomp}. Consequently, for any infinite-alternating function $\varphi \colon 2^J \to \bR$ there exists a unique decomposition $\varphi=\sum_{\emptyset \neq A \subseteq J} \alpha_A\varphi_A$ with $\alpha_A \in \bR_+$ for all $\emptyset \neq A \subseteq J$.
%\end{rem}
%\begin{rem}
    There are multiple ways to generalize \cref{prop:extremal} to infinite domains, but generally not all normalized extremal elements take the form $\varphi_A$ for some set $A \in \cB$, see~\cite{choquet1954theory,lovasz2023submodular}. 
\end{rem}

The following observations will be useful later on.

\begin{lem} \label{lem:extremal 1}
    Let $(J, \cB)$ be a set algebra and $A, A_0, A_1, \ldots , A_k \in \cB$. Then, 
    \begin{equation*}
        V_{\varphi_A}(A_0;A_1, \ldots , A_k)=\begin{cases}
        -1 &\text{if $A_0 \cap A=\emptyset$ and $A_i \cap A \neq \emptyset$ for all $1 \leq i \leq k$,} \\
        0 &\text{otherwise.}
    \end{cases}
    \end{equation*}
\end{lem}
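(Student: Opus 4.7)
The plan is to expand $V_{\varphi_A}$ directly from the definition and split into cases according to whether $A_0 \cap A = \emptyset$. The key observation is that $(A_0 \cup \bigcup_{i \in K} A_i) \cap A = (A_0 \cap A) \cup \bigcup_{i \in K}(A_i \cap A)$, so $\varphi_A(A_0 \cup \bigcup_{i \in K} A_i) = 1$ iff $A_0 \cap A \neq \emptyset$ or there exists some $i \in K$ with $A_i \cap A \neq \emptyset$. Setting $S \coloneqq \{i \in [k] \mid A_i \cap A \neq \emptyset\}$, the computation reduces to counting signed contributions of subsets $K \subseteq [k]$ based on the intersection pattern with $S$.

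In the first case, suppose $A_0 \cap A \neq \emptyset$. Then every term of the sum evaluates to $1$, so
\[
V_{\varphi_A}(A_0; A_1, \ldots, A_k) = \sum_{K \subseteq [k]} (-1)^{|K|} = 0,
\]
where the last equality follows from the standard binomial identity since $k \geq 1$. In the second case, suppose $A_0 \cap A = \emptyset$. Then $\varphi_A(A_0 \cup \bigcup_{i \in K} A_i)$ equals $1$ if $K \cap S \neq \emptyset$ and $0$ otherwise, so
\[
V_{\varphi_A}(A_0; A_1, \ldots, A_k) = \sum_{\substack{K \subseteq [k] \\ K \cap S \neq \emptyset}} (-1)^{|K|} = \sum_{K \subseteq [k]} (-1)^{|K|} - \sum_{K \subseteq [k] \setminus S} (-1)^{|K|}.
\]
The first sum on the right-hand side equals $0$, while the second equals $0$ if $[k] \setminus S \neq \emptyset$ (i.e., some $A_i \cap A = \emptyset$) and equals $1$ if $[k] \setminus S = \emptyset$ (i.e., $A_i \cap A \neq \emptyset$ for all $i \in [k]$). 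Thus $V_{\varphi_A}(A_0;A_1,\ldots,A_k) = -1$ exactly when $A_0 \cap A = \emptyset$ and every $A_i \cap A$ is nonempty, and equals $0$ otherwise, as claimed.

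There is no real obstacle here: the whole proof is a straightforward application of the inclusion–exclusion–style identity $\sum_{K \subseteq T}(-1)^{|K|} = \mathbbm{1}[T = \emptyset]$. The only point requiring minor care is correctly separating the role of $A_0$ from the roles of $A_1, \ldots, A_k$, which is handled by the case split.
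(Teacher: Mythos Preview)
Your proof is correct and follows essentially the same approach as the paper: both compute $V_{\varphi_A}$ directly by evaluating each term and applying the binomial identity $\sum_{K\subseteq T}(-1)^{|K|}=\mathbbm{1}[T=\emptyset]$. The only cosmetic difference is that where you introduce the set $S$ and split the sum as $\sum_{K\subseteq[k]}-\sum_{K\subseteq[k]\setminus S}$, the paper handles the case of some $A_j\cap A=\emptyset$ by a pairing argument $K\leftrightarrow K+j$; these are two phrasings of the same cancellation.
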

\begin{proof}
    If $A_0 \cap A \neq \emptyset$, then $\varphi_A(A_0 \cup \bigcup_{i \in K} A_i)=1$ for all $K \subseteq [k]$, hence $V_{\varphi_A}(A_0;A_1, \ldots , A_k)=0$.

    If there exists an index $j\in[k]$ such that $A \cap A_j =\emptyset$, then $\varphi_A(A_0 \cup \bigcup_{i \in K} A_i)$ and $\varphi_A(A_0 \cup \bigcup_{i \in K+j} A_i)$ cancel out each other in the sum for all $K \subseteq [k]-j$, hence $V_{\varphi_A}(A_0; A_1, \ldots ,A_k)=0$.

    Finally, if $A_0 \cap A=\emptyset$ and $A_i \cap A \neq \emptyset$ for all $1 \leq i \leq k$, then $\varphi_A(A_0 \cup \bigcup_{i \in K} A_i)=1$ for all nonempty $K \subseteq [k]$, and $\varphi_A(A_0)=0$, hence $V_{\varphi_A}(A_0; A_1, \ldots ,A_k)=-1$.
\end{proof}

\begin{lem} \label{lem:k_0}
   Let $\varphi=\sum_{\emptyset \neq A \subseteq J} \alpha_A\varphi_A$ be an infinite-alternating set function over a finite domain written as the nonnegative combination of normalized extremal elements, and let $k_0$ be a positive integer. Then,  $V_\varphi(A_0; A_1, \ldots ,A_k)=0$ for all $k \geq k_0$ and pairwise disjoint sets $A_0, A_1, \ldots , A_k$ if and only if $\alpha_A=0$ for all $A \subseteq J$ with $|A| \geq k_0$.  
\end{lem}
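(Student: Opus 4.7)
The plan is to leverage the linearity $V_{\varphi+\psi}=V_\varphi+V_\psi$ together with the explicit formula from \cref{lem:extremal 1} that tells us exactly when the generator $\varphi_A$ contributes to $V_{\varphi_A}(A_0;A_1,\dots,A_k)$. This reduces the whole statement to a combinatorial bookkeeping of which extremal generators are detected by a given choice of test sets.

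For the backward direction, suppose $\alpha_A=0$ whenever $|A|\geq k_0$. By linearity of $V_\varphi$ in $\varphi$, for any pairwise disjoint $A_0,\ldots,A_k$ we have $V_\varphi(A_0;A_1,\dots,A_k)=\sum_{A}\alpha_A V_{\varphi_A}(A_0;A_1,\dots,A_k)$. By \cref{lem:extremal 1}, a nonzero contribution requires $A$ to meet each of the pairwise disjoint sets $A_1,\dots,A_k$, which forces $|A|\geq k\geq k_0$. But then $\alpha_A=0$ by assumption, so every term vanishes.

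For the forward direction, I would argue by contradiction in a particularly direct way: rather than doing induction on cardinality, I test the hypothesis at a single carefully chosen family. Fix any $A^*\in\cB$ with $|A^*|\geq k_0$, enumerate $A^*=\{a_1,\dots,a_k\}$ where $k=|A^*|$, and apply the assumption to $A_0=\emptyset$ and $A_i=\{a_i\}$ for $i\in[k]$; these are pairwise disjoint and $k\geq k_0$. By \cref{lem:extremal 1}, $V_{\varphi_A}(\emptyset;\{a_1\},\dots,\{a_k\})=-1$ exactly when $A\cap\{a_i\}\neq\emptyset$ for every $i$, i.e.\ when $A\supseteq A^*$, and it equals $0$ otherwise. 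Hence
\begin{equation*}
0 \;=\; V_\varphi(\emptyset;\{a_1\},\dots,\{a_k\}) \;=\; -\sum_{A\supseteq A^*}\alpha_A.
\end{equation*}
Because all coefficients $\alpha_A$ are nonnegative, every summand must vanish; in particular $\alpha_{A^*}=0$. Since $A^*$ was an arbitrary set of size at least $k_0$, this yields the conclusion.

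There is no real obstacle: the only subtlety worth naming is that the representation $\varphi=\sum_A\alpha_A\varphi_A$ is meaningful precisely because the $\varphi_A$ are linearly independent (as noted in \cref{rem:extremal_unique}), so one may speak of \emph{the} coefficients $\alpha_A$, and that the hypothesis must be applied to $k=|A^*|$ rather than to $k_0$ itself, which is what ensures the singletons $\{a_i\}$ fully probe the incidence pattern of $A^*$.
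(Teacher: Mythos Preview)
Your proof is correct and follows essentially the same approach as the paper's: both directions use linearity together with \cref{lem:extremal 1}, and the forward direction exploits nonnegativity of the coefficients to conclude that a vanishing sum of nonpositive terms forces each term to vanish. Your choice of singleton test sets $A_i=\{a_i\}$ and the explicit identification of the surviving terms as those with $A\supseteq A^*$ is a minor concretization of the paper's argument, but the underlying idea is identical.
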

\begin{proof}
    By linearity, $V_\varphi(A_0; A_1, \ldots ,A_k)=\sum_{\emptyset\neq A\subseteq J} \alpha_A V_{\varphi_A}(A_0; A_1, \ldots ,A_k)$ where $V_{\varphi_A}(A_0; A_1, \ldots ,A_k)\leq 0$ for all $\emptyset\neq A\subseteq J$. Therefore, it suffices to show that for a fix $A \subseteq J$, if $k >|A|$ then $V_{\varphi_A}(A_0; A_1, \ldots , A_k)=0$ for all pairwise disjoint sets $A_0, A_1, \ldots , A_k$, and if $k \leq |A|$ then there exist pairwise disjoint sets $A_0, A_1, \ldots , A_k$ with $V_{\varphi_A}(A_0; A_1, \ldots , A_k) < 0$. 

    If $k>|A|$, then for any pairwise disjoint sets $A_0, A_1, \ldots , A_k$, there exists an index $j\in[k]$ such that $A \cap A_j =\emptyset$, hence $V_{\varphi_A}(A_0; A_1, \ldots ,A_k)=0$ by \cref{lem:extremal 1}. 
    If $k \leq |A|$, let $A_0=\emptyset$ and $A_1, A_2, \ldots , A_k$ be pairwise disjoint subsets of $A$. By \cref{lem:extremal 1}, we get $V_{\varphi_A}(A_0; A_1, \ldots , A_k)=-1$. 
\end{proof}

\begin{lem} \label{lem:extremal 2}
     Let $J$ be a finite set, $\ell$ a positive integer, and $\varphi\coloneqq\sum [\varphi_A\mid \emptyset \neq A \subseteq J,|A| \leq \ell]$. Then, for any pairwise disjoint sets $A_0, A_1, \ldots , A_k$, $V_\varphi(A_0; A_1, \ldots , A_k)=0$ if and only if $k > \ell$, otherwise $V_\varphi(A_0; A_1, \ldots , A_k)<0$.
\end{lem}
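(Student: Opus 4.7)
The plan is to reduce everything to Lemma~\ref{lem:extremal 1} via the linearity of the functional $V$ in its set-function argument. Recall that $V_{\varphi+\psi}=V_\varphi+V_\psi$, so
\[
V_\varphi(A_0;A_1,\dots,A_k)=\sum_{\substack{\emptyset\neq A\subseteq J\\ |A|\leq\ell}} V_{\varphi_A}(A_0;A_1,\dots,A_k).
\]
By Lemma~\ref{lem:extremal 1}, each summand on the right is either $0$ or $-1$, and it equals $-1$ precisely when $A\cap A_0=\emptyset$ and $A\cap A_i\neq\emptyset$ for every $i\in[k]$. In particular, $V_\varphi(A_0;A_1,\dots,A_k)\leq 0$ always, and strict inequality amounts to the existence of at least one set $A\subseteq J$ with $\emptyset\neq A$, $|A|\leq\ell$, which is disjoint from $A_0$ and meets every $A_i$ for $i\in[k]$.

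For the $k>\ell$ direction, I would argue that no such $A$ can exist. Indeed, since $A_1,\dots,A_k$ are pairwise disjoint, a set meeting each of them must contain at least $k$ distinct elements, one from each $A_i$. Hence $|A|\geq k>\ell$, contradicting $|A|\leq\ell$. Therefore every term in the sum vanishes and $V_\varphi(A_0;A_1,\dots,A_k)=0$.

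For the $k\leq\ell$ direction, where we may assume as usual (by Lemma~\ref{lem:nonempty}) that $A_i\neq\emptyset$ for every $i\in[k]$, I would exhibit an explicit witness $A$. Pick one element $a_i\in A_i$ for each $i\in[k]$ and set $A\coloneqq\{a_1,\dots,a_k\}$. Pairwise disjointness of $A_1,\dots,A_k$ guarantees that the $a_i$ are distinct, so $|A|=k\leq\ell$, and disjointness of $A_0$ from each $A_i$ yields $A\cap A_0=\emptyset$. By construction $A\cap A_i\neq\emptyset$ for every $i\in[k]$, so Lemma~\ref{lem:extremal 1} gives $V_{\varphi_A}(A_0;A_1,\dots,A_k)=-1$. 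Combined with the fact that all other terms are nonpositive, this forces $V_\varphi(A_0;A_1,\dots,A_k)\leq -1<0$.

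There is no real obstacle here; the only subtlety is the standing non-emptiness assumption on $A_1,\dots,A_k$, without which the ``otherwise'' clause would fail for trivial reasons (take all $A_i=\emptyset$). This is exactly the harmless normalization provided by Lemma~\ref{lem:nonempty}, so the two cases together complete the proof.
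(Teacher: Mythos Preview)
Your proof is correct and follows essentially the same approach as the paper: both use linearity together with \cref{lem:extremal 1} to reduce the question to the existence of a nonempty $A$ with $|A|\le\ell$ that misses $A_0$ and meets every $A_i$, then argue by pigeonhole for $k>\ell$ and by an explicit transversal $\{a_1,\dots,a_k\}$ for $k\le\ell$. The only cosmetic difference is that the paper invokes \cref{lem:k_0} for the $k>\ell$ direction rather than spelling out the pigeonhole argument, and your explicit treatment of the non-emptiness assumption via \cref{lem:nonempty} is a welcome clarification.
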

\begin{proof}
    The `if' direction follows from \cref{lem:k_0}. For the `only if' direction, let $A_0, A_1, \ldots , A_k$ be pairwise disjoint sets with $k \leq \ell$. Choose $A$ such that $|A| \leq \ell$ and $A \cap A_i \neq \emptyset$ precisely if $1 \leq i \leq k$. By \cref{lem:extremal 1}, $V_{\varphi_A}(A_0;A_1, \ldots , A_k) =-1$, hence $V_\varphi(A_0;A_1, \ldots , A_k)<0$.
\end{proof}

%%%%%%%%%%%%%%%%%%%%%%%%%%%%%%%%
\subsection{Weakly \texorpdfstring{$k$}{k}-alternating set functions}
\label{sec:weaklykalternating}
%%%%%%%%%%%%%%%%%%%%%%%%%%%%%%%%

In certain cases, it will be more convenient to work with set functions satisfying the $k$-alternating property for pairwise disjoint sets, which motivates the following definition. Let $(J,\cB)$ be a set algebra. For a positive integer $k$, a set function $\varphi \colon \cB \to \bR$ is \emph{weakly $k$-alternating} if $\varphi(\emptyset)=0$ and it satisfies the inequality~\eqref{eq:kalt} for all pairwise disjoint $A_0, A_1, \ldots, A_k \in\cB$. The connection between the $k$-alternating and weakly $k$-alternating properties is given by the following lemma.

\begin{lem} \label{lem:weak_strong_alternating_connection}
    A set function $\varphi$ is $k$-alternating if and only if it is weakly $\ell$-alternating for all $\ell\in[k]$.
\end{lem}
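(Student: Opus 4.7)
The ``only if'' direction is essentially free: if $\varphi$ is $k$-alternating then, by Lemma~\ref{lem:alternating monotonicity}, it is $\ell$-alternating for every $\ell\in[k]$, and being $\ell$-alternating (inequality~\eqref{eq:kalt} for \emph{all} tuples $A_0,A_1,\dots,A_\ell$) is a priori stronger than being weakly $\ell$-alternating (inequality for pairwise disjoint tuples only). So the real content is the converse, which I would prove by an outer induction on $k$, nested with an inner induction on the amount of overlap among $A_1,\dots,A_k$.

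For the outer base case $k=1$: given arbitrary $A_0,A_1\in\cB$, Lemma~\ref{lem:nonempty} lets me replace $A_1$ by $A_1\setminus A_0$ without changing $V_\varphi(A_0;A_1)$, and then the weakly $1$-alternating property applied to the disjoint pair yields $V_\varphi(A_0;A_1)\le 0$. For the outer inductive step, fix $A_0,A_1,\dots,A_k\in\cB$ and, by Lemma~\ref{lem:nonempty}, assume $A_0\cap A_i=\emptyset$ for all $i\in[k]$. Let $N$ denote the number of pairs $1\le i<j\le k$ with $A_i\cap A_j\ne\emptyset$; I induct on $N$. If $N=0$ the sets $A_0,A_1,\dots,A_k$ are pairwise disjoint and the weakly $k$-alternating assumption gives the conclusion directly.

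If $N\ge 1$, by the symmetry of $V_\varphi$ in its last $k$ arguments I may assume $C\coloneqq A_{k-1}\cap A_k\ne\emptyset$. The key step is the identity
\begin{equation*}
V_\varphi(A_0;A_1,\dots,A_k)=V_\varphi(A_0\cup C;A_1,\dots,A_{k-2},A_{k-1}\setminus C,A_k\setminus C)+V_\varphi(A_0;A_1,\dots,A_{k-2},C),
\end{equation*}
which I would prove by grouping the sum defining $V_\varphi(A_0;A_1,\dots,A_k)$ according to $K\cap\{k-1,k\}$: this reduces the claim to the $k=2$ identity $V_\varphi(X;A_{k-1},A_k)=V_\varphi(X\cup C;A_{k-1}\setminus C,A_k\setminus C)+V_\varphi(X;C)$ applied with $X=A_0\cup\bigcup_{i\in K'}A_i$ for each $K'\subseteq[k-2]$, and then reassembling. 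The $k=2$ identity itself is a direct expansion and is very short.

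Both terms on the right of the identity are nonpositive. In the first term, Lemma~\ref{lem:nonempty} lets me further replace each $A_i$ ($i\le k-2$) by $A_i\setminus C$ without altering the value; the resulting sets $A_1\setminus C,\dots,A_{k-2}\setminus C,A_{k-1}\setminus C,A_k\setminus C$ now have the pair $(k-1,k)$ disjoint, while all other pairwise intersections lie in the corresponding old ones, so the overlap count has strictly dropped from $N$. The inner inductive hypothesis then gives that this first term is $\le 0$. The second term is a $V_\varphi$-expression with only $k-1$ ``other'' sets $A_1,\dots,A_{k-2},C$, and by the outer inductive hypothesis $\varphi$ is $(k-1)$-alternating, so this term is also $\le 0$. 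Adding the two bounds closes the induction. The only real obstacle is the bookkeeping in verifying the displayed identity and in checking that the overlap count strictly decreases after applying Lemma~\ref{lem:nonempty} in the first term; both are routine but must be done carefully.
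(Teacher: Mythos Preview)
Your argument is correct and follows essentially the same double-induction scheme as the paper: an outer (strong) induction on $k$, and an inner induction that peels off a nonempty intersection and splits $V_\varphi(A_0;A_1,\dots,A_k)$ into a $k$-term with strictly less overlap plus a $(k-1)$-term handled by the outer hypothesis. The paper's identity is the same as yours (it takes $X=\bigcap_{i\le\ell}A_i$ where your $C=A_{k-1}\cap A_k$ corresponds to $\ell=2$), and both proofs finish the two summands in the same way.

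The one substantive difference is the inner induction parameter: the paper inducts on $\bigl|\bigcup_{i=1}^k A_i\bigr|$, whereas you induct on the number $N$ of intersecting pairs among $A_1,\dots,A_k$. Your choice is in fact a small improvement, since it makes sense over an arbitrary set algebra $(J,\cB)$; inducting on $\bigl|\bigcup_i A_i\bigr|$ implicitly assumes these sets are finite. The bookkeeping you flag (that $N$ strictly drops after replacing each $A_i$ by $A_i\setminus C$) is indeed routine: the pair $(k-1,k)$ becomes disjoint while every other intersection can only shrink.
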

\begin{proof}
    If $\varphi$ is $k$-alternating then, by \cref{lem:alternating monotonicity}, it is also $\ell$-alternating and so weakly $\ell$-alternating for $\ell\in [k]$.

    The other direction is a bit more technical and is proved using strong induction on $k$, and subject to this, by induction on $|\bigcup_{i=1}^k A_i |$. For $k=1$ and $k=2$, the statement is true by \cref{lem:12-alternating}. Therefore, assume that $k\geq 3$ and that the statement is true for all $\ell <k$ and for all sets $A_0', A_1', \ldots , A_k'$ with $|\bigcup_{i=1}^k A_i' |<|\bigcup_{i=1}^k A_i |$. By Lemma~\ref{lem:nonempty}, we may assume that $A_0 \cap (\bigcup_{i=1}^k A_i)=\emptyset$ and $A_i \neq \emptyset$ for all $i\in[k]$. 
    If $A_1, A_2, \ldots , A_k$ are pairwise disjoint, then the inequality follows by the weakly $k$-alternating property. Otherwise, by possibly relabeling the sets, there exists a $2 \leq \ell \leq k$ and a set $X$ with $\bigcap_{i=1}^\ell A_i=X$ and $X \cap \bigcup_{i=\ell+1}^k A_i=\emptyset$. By the inductive hypothesis, \eqref{eq:kalt} holds for the sets $A_0'=A_0 \cup X$ and $A_i'=A_i \setminus X$ for $i\in[k]$, hence it is enough to prove that
    \begin{equation*}
    V_\varphi(A_0;A_1, \ldots , A_k) \leq V_\varphi(A_0';A_1', \ldots , A_k').
    \end{equation*}
    The sets $A_0 \cup \bigcup_{i \in K} A_i$ and $A_0' \cup \bigcup_{i \in K} A_i'$ are equal if $K \cap [\ell] \neq \emptyset$, and otherwise their difference is exactly $X$. Thus, we get 
    \begin{align*}
    V_\varphi(A_0;A_1, \ldots , A_k)-V_\varphi(A_0';A_1', \ldots , A_k')
    = {}&{} \sum_{K \subseteq [k] \setminus [\ell]} (-1)^{|K|}\Bigl(\varphi\bigl(A_0 \cup \bigcup_{i \in K} A_i\bigr)-\varphi\bigl(X \cup A_0 \cup \bigcup_{i \in K} A_i\bigr)\Bigr) \\
    = {}&{} V_\varphi(A_0; A_{\ell+1}, A_{\ell+2}, \ldots , A_k, X)\\
    \leq {}&{}  0,
    \end{align*}
    where the last inequality follows by induction for sets $A_0, A_{\ell+1}, A_{\ell+2}, \ldots , A_k, X$. This finishes the proof of the lemma.
\end{proof}

The lemma implies the following characterization of infinite-alternating set functions.

\begin{cor}
    A set function $\varphi$ is infinite-alternating if and only if it is weakly $k$-alternating for all $k \geq 1$.
\end{cor}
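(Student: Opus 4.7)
The corollary is an immediate consequence of Lemma~\ref{lem:weak_strong_alternating_connection}, so the plan is simply to unwind the two definitions and invoke it in each direction. There is no real technical content beyond quoting the previous lemma; the main "obstacle" (already handled) was the inductive argument establishing Lemma~\ref{lem:weak_strong_alternating_connection}.

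For the forward direction, suppose $\varphi$ is infinite-alternating. By definition this means $\varphi$ is $k$-alternating for every positive integer $k$. Fix any $k \geq 1$; applying Lemma~\ref{lem:weak_strong_alternating_connection} to this $k$ yields in particular that $\varphi$ is weakly $k$-alternating (taking $\ell = k$). Since $k$ was arbitrary, $\varphi$ is weakly $k$-alternating for all $k \geq 1$.

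For the backward direction, suppose $\varphi$ is weakly $k$-alternating for all $k \geq 1$. Fix any $k \geq 1$. Then for every $\ell \in [k]$, $\varphi$ is weakly $\ell$-alternating by hypothesis, so Lemma~\ref{lem:weak_strong_alternating_connection} gives that $\varphi$ is $k$-alternating. As this holds for all $k$, $\varphi$ is infinite-alternating by definition.

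Thus the entire proof will consist of two short paragraphs pointing to Lemma~\ref{lem:weak_strong_alternating_connection}, with no additional computation needed.
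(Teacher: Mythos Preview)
Your proposal is correct and matches the paper's approach: the paper states the corollary immediately after Lemma~\ref{lem:weak_strong_alternating_connection} as a direct consequence, and your two-direction argument is exactly the intended unpacking of that implication.
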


It is a natural question whether there exists a $k$ such that every infinite-alternating function is already $k$-alternating. The following theorem answers this question in the negative.

\begin{thm}
    For any $\ell \geq 1$, there exists a function $\varphi$ which is $\ell$-alternating but not $(\ell+1)$-alternating.
\end{thm}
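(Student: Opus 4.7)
The plan is to build an explicit example on the finite ground set $J=[\ell+1]$ using the extremal functions $\varphi_A$ defined in \eqref{eq:extremal_def}. My candidate is
\[
\varphi \coloneqq \sum_{\substack{\emptyset\neq A\subseteq J\\ |A|\leq \ell}} \varphi_A \;-\; \varphi_J.
\]
The intuition is that, by \cref{lem:extremal 2}, the first summand $\psi\coloneqq\sum_{\emptyset\neq A\subseteq J,\,|A|\leq \ell}\varphi_A$ is strictly $k$-alternating for every $k\leq \ell$ on pairwise disjoint inputs, and satisfies $V_{\psi}(A_0;A_1,\dots,A_k)=0$ once $k>\ell$. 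So subtracting $\varphi_J$ cannot destroy $k$-alternation for $k\leq \ell$ (the strict slack in $\psi$ will dominate the $-V_{\varphi_J}$ correction), but at level $k=\ell+1$ it will create a strict violation.

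First I would show that $\varphi$ fails to be $(\ell+1)$-alternating by taking $A_0=\emptyset$ and $A_i=\{i\}$ for $i\in[\ell+1]$. By \cref{lem:extremal 2}, $V_{\psi}(A_0;A_1,\dots,A_{\ell+1})=0$ since $\ell+1>\ell$, while by \cref{lem:extremal 1}, $V_{\varphi_J}(A_0;A_1,\dots,A_{\ell+1})=-1$, because $A_0\cap J=\emptyset$ and every $A_i\cap J=\{i\}\neq\emptyset$. By linearity of $V$ in its set function, this gives $V_\varphi(A_0;A_1,\dots,A_{\ell+1})=0-(-1)=1>0$, contradicting \eqref{eq:kalt} for $k=\ell+1$.

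Next I would show $\varphi$ is $\ell$-alternating. By \cref{lem:weak_strong_alternating_connection} it suffices to verify the weakly $k$-alternating property for each $k\in[\ell]$ and all pairwise disjoint $A_0,A_1,\dots,A_k$. Applying \cref{lem:extremal 1} termwise yields
\[
V_\varphi(A_0;A_1,\dots,A_k)=-N(A_0;A_1,\dots,A_k)+\mathbbm{1}\bigl[A_0=\emptyset\text{ and } A_i\neq\emptyset\ \forall i\in[k]\bigr],
\]
where $N(A_0;A_1,\dots,A_k)$ is the number of sets $A\subseteq J$ with $1\leq |A|\leq \ell$, $A\cap A_0=\emptyset$, and $A\cap A_i\neq\emptyset$ for every $i\in[k]$. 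When the indicator is $0$, the inequality $V_\varphi\leq 0$ is immediate. Otherwise $A_0=\emptyset$ and each $A_i$ is nonempty, and I would observe that picking one element from each $A_i$ yields a transversal of size $k\leq\ell$ that meets every $A_i$ and lies in $J\setminus A_0$, so $N\geq 1$ and hence $V_\varphi\leq 0$.

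The real content is the transversal observation in the last step; once the candidate $\varphi$ is written down, \cref{lem:extremal 1,lem:extremal 2} handle everything else by bookkeeping. The delicate point is that $|J|=\ell+1$ must be just large enough to accommodate $\ell+1$ pairwise disjoint nonempty singletons for the violating instance, and simultaneously small enough that any transversal of at most $\ell$ of them stays within the size bound $|A|\leq\ell$ used in the positive sum.
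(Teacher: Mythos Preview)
Your proposal is correct and follows essentially the same approach as the paper: the construction is identical (the paper takes any $|J|>\ell$ and $X\subseteq J$ with $|X|=\ell+1$, while you make the minimal choice $J=[\ell+1]$, $X=J$), and the verification uses the same ingredients. The only cosmetic difference is that for the $\ell$-alternating direction the paper invokes \cref{lem:extremal 2} directly to get $V_\psi<0$ (hence $\leq -1$) whenever $k\leq\ell$, whereas you unpack that lemma's proof by exhibiting the transversal explicitly; these are the same argument. Your closing remark that $|J|$ must be ``small enough'' for the transversal to fit is inaccurate, though harmless: a transversal of $k$ pairwise disjoint nonempty sets always has size exactly $k\leq\ell$ regardless of $|J|$, which is why the paper's construction works for any $|J|>\ell$.
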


\begin{proof}
    Let $J$ be a finite set with $|J|>\ell$, and $X \subseteq J$ with $|X|=\ell+1$. Define
    \begin{equation*}
        \varphi\coloneqq\sum_{\substack{\emptyset \neq A \subseteq J \\ |A| \leq \ell}} \varphi_A,
     \end{equation*}
     and $\psi=\varphi-\varphi_X$.
     For $k \leq \ell$, using \cref{lem:extremal 1} on $\varphi_X$, \cref{lem:extremal 2} on $\varphi$, and linearity, 
     \begin{align*}
         V_\psi(A_0;A_1, \ldots ,A_k)= V_\varphi(A_0;A_1, \ldots ,A_k)-V_{\varphi_X}(A_0;A_1, \ldots ,A_k) 
         \leq 0
     \end{align*}
     for any pairwise disjoint sets $A_0, A_1, \ldots , A_k$, hence $\psi$ is $\ell$-alternating by \cref{lem:weak_strong_alternating_connection}.

     If $k=\ell+1$, $A_0=\emptyset$ and $A_i=\{x_i\}$ for $1 \leq i \leq k$, where $X=\{x_1,x_2, \ldots , x_k\}$, then again, using \cref{lem:extremal 1} on $\varphi_X$, \cref{lem:extremal 2} on $\varphi$, and linearity, 
     \begin{align*}
         V_\psi(A_0;A_1, \ldots ,A_k)= V_\varphi(A_0;A_1, \ldots ,A_k)-V_{\varphi_X}(A_0;A_1, \ldots ,A_k) = 1,
     \end{align*} 
     hence $\psi$ is not $(\ell+1)$-alternating.
\end{proof}

As a relaxation of infinite-alternating set functions, we introduce one of the key concepts of the paper and call a set function $\varphi \colon \cB \to \bR$ \emph{weakly infinite-alternating} if it is weakly $k$-alternating for all $k \geq 2$. At this point, it is probably not clear to the reader what exactly the relationship is between the proposed class and, for example, coverage functions. In Section~\ref{sec:strongly}, we will show that any weakly infinite-alternating function can be obtained from an infinite-alternating function by subtracting a nonnegative charge (Corollary \ref{cor:strong2weak}).

%%%%%%%%%%%%%%%%%%%%%%%%%%%%%%%%
\subsection{Examples}
\label{sec:hypergraph}
%%%%%%%%%%%%%%%%%%%%%%%%%%%%%%%%

To give a better understanding of infinite-alternating and weakly infinite-alternating set functions, we present some basic examples.

\begin{ex} \label{ex:constant}
    Let $(J,\cB)$ be a set algebra. Every constant function $\varphi$ on $\cB$ satisfies \eqref{eq:kalt} for all $k\geq 1$, hence it satisfies all requirements of being infinite-alternating apart from $\varphi(\emptyset)=0$. Furthermore, $V_\varphi(A_0;A_1)=0$ for all $A_0, A_1$, hence by induction and \eqref{eq:diminishing}, $V_\varphi(A_0;A_1,\ldots , A_k)=0$ for all $k \geq 1$ and $A_0, A_1, \ldots , A_k$. This guarantees that the assumption $\varphi(\emptyset)=0$ could be relaxed everywhere, as shifting the values of $\varphi$ by a constant does not change the values $V_\varphi(A_0; A_1, \ldots , A_k)$.
\end{ex}

\begin{ex}\label{ex:modular}
    Let $(J,\cB)$ be a set algebra. Every nonnegative charge (or equivalently, every increasing modular set function) $\mu \colon \cB \to \bR$ is infinite-alternating with $V_\mu(A_0; A_1, \ldots , A_k)=0$ for all $k \geq 2$ and pairwise disjoint sets $A_0, A_1, \ldots , A_k$. Indeed, as $\mu$ is modular, $V_\mu(A_0; A_1, A_2)=0$ for all pairwise disjoint sets $A_0, A_1, A_2$, hence by induction and \eqref{eq:diminishing}, $V_\mu(A_0;A_1,\ldots , A_k)=0$ for all $k \geq 2$ and pairwise disjoint sets $A_0, A_1, \ldots , A_k$.
    
    If $\mu$ is a charge (or equivalently, a modular set function), then an analogous computation shows that $\mu$ is weakly infinite-alternating with $V_\mu(A_0; A_1, \ldots , A_k)=0$ for all $k \geq 2$ and pairwise disjoint sets $A_0, A_1, \ldots , A_k$.
\end{ex}

\begin{ex} \label{ex:cutfn}
    Let $G=(V,E)$ be a graph and $w \colon E \to \bR_+$ be a weight function. Then, $e_w$ is infinite-alternating and $d_w$ is weakly infinite-alternating. Furthermore, we have $V_{e_w}(A_0; A_1, \ldots , A_k)=0$ and $V_{d_w}(A_0;A_1,\dots,A_k)=0$ for all $k \geq 3$ and pairwise disjoint sets $A_0, A_1, \ldots , A_k$.  
    
    To see these, note that $e_w=\sum_{\{u,v\} \in E} w(\{u,v\}) \cdot \varphi_{\{u,v\}}$, where $\varphi_A$ is defined as in $\eqref{eq:extremal_def}$. Therefore, $e_w$ is infinite alternating by \cref{prop:extremal},  and $V_{e_w}(A_0; A_1, \ldots , A_k)=0$ holds for all $k \geq 3$ and pairwise disjoint sets $A_0, A_1, \ldots , A_k$ by \cref{lem:k_0}. The statements about $d_w$ follow from the equality $d_w=2e_w-(e_w+i_w)$, since $e_w(X)+i_w(X)=\sum_{x \in X} d_w(x)$ is modular.
\end{ex}

It is not difficult to see that a set function $\varphi$ is infinite-alternating with $V_\varphi(A_0,;A_1,\ldots,A_k)=0$ for $k=1$ if and only if it is identically $0$. \cref{ex:modular} and \cref{ex:cutfn} provide examples of infinite-alternating and weakly infinite-alternating set functions such that $V_\varphi(A_0; A_1, \ldots , A_k)=0$ for all $k \geq 2$ and $k \geq 3$, respectively. Using \cref{lem:k_0}, it is easy to see that the converse is also true, i.e., these examples characterize infinite-alternating and weakly infinite-alternating set functions with $V_\varphi(A_0; A_1, \ldots , A_k)=0$ for all $k \geq 1$, $k \geq 2$ or $k \geq 3$.

\begin{rem}
    It is worth mentioning that hypergraph cut functions are not weakly infinite-alternating. Let $e=\{a,b,c,d\}$ be the only hyperedge of a hypergraph, and let $A_0=\{a\}$, $A_1=\{b\}$, $A_2=\{c\}$, $A_3=\{d\}$. If $d$ denotes the cut function of the hypergraph, then $d(A_0 \cup \bigcup_{i \in K} A_i)=0$ if and only if $K=[3]$, hence
    \begin{equation*}
    V_d(A_0;A_1,A_2,A_3)=1-3+3-0=1.
    \end{equation*}
\end{rem}

\begin{ex} 
    A natural problem is to characterize matroid rank functions that satisfy \eqref{eq:kalt} for some $k$. While every infinite-alternating set function can be obtained as the nonnegative combination of partition matroid rank functions, it turns out that the hierarchy of $k$-alternating matroid rank functions does not give an interesting categorization of matroids. More precisely, let $(M,r)$ be a loopless matroid with rank function $r$. Then, the following are equivalent:
    \begin{enumerate}[label=(\roman*)]\itemsep0em
        \item $r$ is 3-alternating, \label{it:i}
        \item $r$ is infinite-alternating, \label{it:ii}
        \item $M$ is a partition matroid with upper bound $1$ on each partition class. \label{it:iii}
    \end{enumerate}
    
    Here, the implication \ref{it:ii}$\Rightarrow$\ref{it:i} is immediate.

    For \ref{it:i}$\Rightarrow$\ref{it:iii}, suppose first that $M$ has a circuit $C$ with $|C| \geq 3$. Let $A_0=\emptyset$ and $A_1, A_2, A_3$ be a partition of $C$. Then, $r$ is almost the modular set function $r(X)=|X|$ on the set algebra generated by $A_1, A_2, A_3$, except for $r(A_1 \cup A_2 \cup A_3)=r(C)=|C|-1$. This implies $V_r(A_0;A_1,A_2,A_3)=1$, contradicting $r$ being $3$-alternating. Therefore, each circuit of $M$ has size at most 2, which is equivalent to $M$ being a partition matroid with upper bound 1 on each partition class.

    Finally, for \ref{it:iii}$\Rightarrow$\ref{it:ii}, let $P_1, \ldots , P_q$ denote the partition classes defining $M$. Then, $r=\sum_{i=1}^q \varphi_{P_i}$, where $\varphi_{A}$ is defined as in \eqref{eq:extremal_def}.  Hence $r$ is infinite-alternating by \cref{prop:extremal}.     
\end{ex}

%%%%%%%%%%%%%%%%%%%%%%%%%%%%%%%%
\subsection{Difference of infinite-alternating set functions}
\label{sec:infaltdiff}
%%%%%%%%%%%%%%%%%%%%%%%%%%%%%%%%

We have already seen in the introduction that any submodular set function over a finite set has a monotonic diff-decomposition. Here we strengthen this result by showing that any set function $\varphi\colon 2^J \to \bR$ over a finite domain can be expressed as the difference of two infinite-alternating set functions. We show two different proofs for this result. The first proof uses linear algebraic techniques, and it provides a canonical decomposition in a certain sense. In contrast, the second proof is more elementary and yields a decomposition in which $\varphi_2$ is always the same for any $\varphi$ up to multiplication by a constant.

\begin{thm} \label{thm:finite_diff-decomp}
    Let $J$ be a finite set and $ \varphi\colon 2^J \to \bR$ be an arbitrary set function with $\varphi(\emptyset)=0$. Then, there exist infinite-alternating set functions $\varphi_1, \varphi_2\colon 2^J \to \bR$ such that $\varphi=\varphi_1-\varphi_2$.
\end{thm}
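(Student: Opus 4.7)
I would leverage Proposition~\ref{prop:extremal} (Choquet's extreme ray characterization) and the linear independence of $\{\varphi_A\mid \emptyset\neq A\subseteq J\}$ noted in Remark~\ref{rem:extremal_unique}. I would present two routes, as promised right before the theorem.

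\textbf{First route (linear algebra).} The vector space of set functions $f\colon 2^J\to\bR$ with $f(\emptyset)=0$ has dimension $2^{|J|}-1$. Since the family $\{\varphi_A\mid \emptyset\neq A\subseteq J\}$ has exactly $2^{|J|}-1$ elements and is linearly independent (Remark~\ref{rem:extremal_unique}), it is a basis of this space. So there exist unique $\alpha_A\in\bR$ with $\varphi=\sum_{\emptyset\neq A\subseteq J}\alpha_A\varphi_A$. Splitting by sign, I set $\varphi_1\coloneqq\sum_{\alpha_A>0}\alpha_A\varphi_A$ and $\varphi_2\coloneqq\sum_{\alpha_A<0}(-\alpha_A)\varphi_A$. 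Because the inequalities~\eqref{eq:kalt} are preserved under nonnegative linear combinations (recall $V_{\varphi+\psi}=V_\varphi+V_\psi$ and $V_{c\varphi}=cV_\varphi$), both $\varphi_1$ and $\varphi_2$ are infinite-alternating, and $\varphi=\varphi_1-\varphi_2$. Uniqueness of the $\alpha_A$ makes this decomposition canonical.

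\textbf{Second route (elementary shift).} I would instead set
\[
\varphi_2\coloneqq c\sum_{\emptyset\neq A\subseteq J}\varphi_A,\qquad \varphi_1\coloneqq\varphi+\varphi_2
\]
for a sufficiently large constant $c>0$, so that $\varphi_2$ is infinite-alternating (by Proposition~\ref{prop:extremal}) and depends on $\varphi$ only through the choice of $c$. To verify that $\varphi_1$ is infinite-alternating, by Lemma~\ref{lem:weak_strong_alternating_connection} it suffices to check $V_{\varphi_1}(A_0;A_1,\dots,A_k)\leq 0$ for all $k\geq 1$ and all pairwise disjoint $A_0,A_1,\dots,A_k\subseteq J$; by Lemma~\ref{lem:nonempty} we may further assume $A_i\neq\emptyset$ for $i\geq 1$, and in particular $k\leq|J|$, leaving only finitely many configurations to consider. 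By Lemma~\ref{lem:extremal 2} applied with $\ell=|J|$, we have $V_{\varphi_2}(A_0;A_1,\dots,A_k)\leq -c$ on each such configuration (the sum over $A$ of the integer-valued $V_{\varphi_A}$ is at most $-1$ by Lemma~\ref{lem:extremal 1}). Choosing $c$ larger than $\max V_\varphi$ over these finitely many configurations yields $V_{\varphi_1}=V_\varphi+V_{\varphi_2}\leq 0$, as required.

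\textbf{Main obstacle.} Neither route is technically hard; both rest essentially on Proposition~\ref{prop:extremal}. The only real care point is the finiteness reduction in the second route, which is precisely what lets a single universal constant $c$ absorb $V_\varphi$ via the strict negativity of Lemma~\ref{lem:extremal 2}. The first route is shorter once linear independence is granted, while the second has the appealing feature that the subtracted $\varphi_2$ is the same up to scale for every~$\varphi$.
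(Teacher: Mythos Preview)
Your proposal is correct and mirrors the paper's two proofs almost exactly. Your first route is precisely the paper's first proof phrased without the matrix notation (the paper writes the basis expansion as $\varphi=\mathbf{P}(\mathbf{N}\varphi)$ and takes $\varphi_i=\mathbf{P}((\mathbf{N}\varphi)^\pm)$), and your second route is the paper's second proof with the constant taken to be $m\coloneqq\max V_\varphi$ over the finitely many disjoint configurations; your observation that $V_{\sum_A\varphi_A}\le -1$ on each such configuration (via Lemmas~\ref{lem:extremal 1}--\ref{lem:extremal 2}) is exactly the point the paper uses.
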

\begin{proof}[First proof]
 A set function $\psi \colon 2^J \to \bR$ with $\psi(\emptyset)=0$ can be thought of as a vector of size $\bR^{2^{|J|}-1}$ with entries corresponding to the values of $\psi$ on nonempty subsets. Let $\mathbf{P} \in \bR^{(2^{|J|}-1) \times (2^{|J|}-1)}$ be the matrix with rows and columns indexed by the nonempty subsets of $J$, where $\mathbf{P}_{X,Y}=\mathbbm{1}(X \cap Y \neq \emptyset)$; here $\mathbbm{1}$ denotes an indicator function. Let us also define $\mathbf{N} \in \bR^{(2^{|J|}-1) \times (2^{|J|}-1)}$ as the matrix with $\mathbf{N}_{X,Y}=(-1)^{|X \cap Y|-1} \cdot \mathbbm{1}(X \cup Y =J)$. An easy calculation shows that $\mathbf{N}=\mathbf{P}^{-1}$, see e.g. \cite{lovasz2023submodular}. The set function $\mathbf{P}\psi$ is defined by the appropriate matrix multiplication, with $\mathbf{P}\psi(\emptyset)=\mathbf{N}\psi(\emptyset)=0$. Proposition \ref{prop:extremal} implies that $\varphi$ is infinite-alternating if and only if it can be expressed as $\varphi=\mathbf{P}\alpha$ with a set function $\alpha:~2^J\setminus \{\emptyset\}\to\R_+$; this, in turn, is equivalent with $\mathbf{N}\varphi\ge 0$.  In particular, the nonsingularity of $\mathbf{P}$ implies \cref{rem:extremal_unique}.
 
   Let $\varphi_1=\mathbf{P}\left( (\mathbf{N}\varphi)^+ \right)$ and $\varphi_2=\mathbf{P}\left( (\mathbf{N}\varphi)^- \right)$. Then, 
    \begin{align*}
        \varphi_1-\varphi_2={}&{} \mathbf{P}\left( (\mathbf{N}\varphi)^+ \right)-\mathbf{P}\left( (\mathbf{N}\varphi)^- \right) \\
        = {}&{} \mathbf{P}(\mathbf{N}\varphi) \\
        ={}&{} \varphi,
    \end{align*}
    where $\varphi_1$ and $\varphi_2$ are infinite-alternating by \cref{prop:extremal}, as the multiplication of $\mathbf{P}$ by a nonnegative vector is a linear combination of functions $\varphi_A$ given by \eqref{eq:extremal_def} with nonnegative coefficients.
\end{proof}

\begin{proof}[Second proof.]
    Let us define
    \begin{equation*}
        m\coloneqq\max\Bigl\{
        V_\varphi(A_0;A_1, \ldots , A_k) \
        %\sum_{K \subseteq [k]} (-1)^{|K|}\varphi\bigl(A_0 \cup \bigcup_{i \in K} A_i\bigr)
        \bigl\vert \ k\in[|J|], A_0,\dots,A_k\subseteq J\ \text{are pairwise disjoint},A_i\neq\emptyset\ \text{for $i\in[k]$}\Bigr\}.
    \end{equation*}    
    Let $\varphi_2\coloneqq m \cdot \sum_{\emptyset\neq A \subseteq J} \varphi_A$, 
    and let $\varphi_1\coloneqq \varphi+\varphi_2$. By \cref{prop:extremal}, the function $\varphi_2$ is infinite-alternating. By \cref{lem:extremal 2} and the definition of $m$,
    \begin{align*}
        V_{\varphi_1}(A_0;A_1, \ldots , A_k)= {}&{} V_\varphi(A_0;A_1, \ldots , A_k)+V_{\varphi_2}(A_0;A_1, \ldots , A_k) \\
        \leq {}&{} 0
    \end{align*}
    for all $1 \leq k \leq |J|$ and pairwise disjoint sets $A_0, A_1, \ldots , A_k$. Consequently, $\varphi_1$ is infinite-alternating by \cref{lem:weak_strong_alternating_connection}.
\end{proof}

%%%%%%%%%%%%%%%%%%%%%%%%%%%%%%%%
\section{General submodular set functions}
\label{sec:main}
%%%%%%%%%%%%%%%%%%%%%%%%%%%%%%%%

The aim of this section is to answer both questions about the existence of monotonic decompositions in the negative. In Section~\ref{sec:finite}, we reduce the existence of such decompositions to the finite setting, with the additional requirement that the decompositions are $c$-bounded for some universal constant $c$. Using this connection, we show that there exists a bounded submodular set function with no monotonic sum-decomposition in Section~\ref{sec:sum}, and with no monotonic diff-decomposition in Section~\ref{sec:difference}. 

%%%%%%%%%%%%%%%%%%%%%%%%%%%%%%%%
\subsection{From infinite to finite domain}
\label{sec:finite}
%%%%%%%%%%%%%%%%%%%%%%%%%%%%%%%%

The idea behind reducing decomposition problems to finite domain is a standard compactness argument. For a set algebra $(J, \cB)$ we can consider the space of set functions $\{f\colon \cB \to \bR\}$ as the product topological space $\prod_{A \in \cB} \bR$. Furthermore, for any $r\colon \cB \to \bR_+$, the space of functions 
\begin{equation*}
   \Big\{f\colon \cB \to \bR \ \big| \  |f(A)| \leq r(A) \text{ for all } A \in \cB \Big\}=\prod_{A \in \cB} [-r(A),r(A)] 
\end{equation*} is compact, as the product of compact sets is compact. We will use that a topological space $X$ is compact if and only if for any collection $\cC$ of closed subsets of $X$ with $\bigcap_{C \in \cC'} C \neq \emptyset$ for all finite subcollection $\cC' \subseteq \cC$, $\bigcap_{C \in \cC} C \neq \emptyset$ also holds.

Let $(J_1, \cB_1)$ and $(J_2, \cB_2)$ be two set algebras. The functions $\varphi_1\colon \cB_1 \to \bR$ and $\varphi_2\colon \cB_2 \to \bR$ are \emph{isomorphic} if there exists a bijection $f\colon J_1 \rightarrow J_2$ such that $f$ and $f^{-1}$ are measurable and $\varphi_2(f(X))=\varphi_1(X)$ for all $X\in \cB_1$.
By a \emph{property} of set functions we mean an isomorphism invariant class of set functions.
We call a property $P$ of set functions \emph{finitary} if a function $\varphi\colon \cB \to \bR$ is in $P$ if and only if $\varphi/\cQ$  is in $P$ for any finite measurable partition $\cQ$ of $J$. Furthermore, call a finitary property $P$ \emph{closed}, if for any set algebra $(J, \cB)$ and any finite measurable partition $\cQ$ of $J$, the set of functions
$\{f\colon \cB \to \bR \ | \ f/\cQ \in P\}$
is closed. Note that this is more general than requiring $\{f\colon\cB \to \bR \ | \ f \in P\}$ to be closed, however for reasonable properties, they are equivalent. 

Let $(J,\cB)$ be a set algebra and $P_1,P_2$ be properties of set functions. We say that a set function $\psi\colon \cB\to\bR$ is \emph{$(P_1,P_2)$-decomposable} if there exist functions $\psi_{1}, \psi_2\colon \cB \to \bR$ of property $P_1$ and $P_2$, respectively, such that $\psi=\psi_1+\psi_2$.

\begin{thm}\label{thm:p1p2} 
Let $P_1$ and $P_2$ be finitary, closed properties of set functions and $(J,\cB)$ be a set algebra. A function $\psi\colon \cB \to \bR$  is $(P_1,P_2)$-decomposable if and only if there exists a function $r\colon \cB\to\bR$ such that for all finite measurable partition $\cQ$, $\psi/\cQ$ has a $(P_1,P_2)$-decomposition $\psi/\cQ=\psi^\cQ_1+\psi^\cQ_2$ with $|\psi^\cQ_1(A)|\leq r(A)$ for all $A\in\cB_\cQ$.
\end{thm}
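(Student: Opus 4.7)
My plan is to handle the forward direction briefly and to attack the substantive backward direction via a Tychonoff compactness argument. For the forward direction, given any decomposition $\psi = \psi_1 + \psi_2$ with $\psi_1 \in P_1$ and $\psi_2 \in P_2$, setting $r(A) \coloneqq |\psi_1(A)|$ suffices: for every finite measurable partition $\cQ$, the finitary property yields $\psi_1/\cQ \in P_1$ and $\psi_2/\cQ \in P_2$, and $\psi/\cQ = \psi_1/\cQ + \psi_2/\cQ$ is the required decomposition with $|\psi^\cQ_1(A)| \leq r(A)$ on $\cB_\cQ$ by construction.

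For the backward direction, the hypothesis forces $r \geq 0$ (apply it to $\cQ = \{A, J \setminus A\}$ for arbitrary $A \in \cB$), so the product space $X \coloneqq \prod_{A \in \cB} [-r(A), r(A)]$ is compact by Tychonoff's theorem. For each finite measurable partition $\cQ$ of $J$, set
\[ C_\cQ \coloneqq \{f \in X \mid f/\cQ \in P_1 \text{ and } (\psi - f)/\cQ \in P_2\}. \]
Closedness of $P_1$ directly implies that $\{f \in X \mid f/\cQ \in P_1\}$ is closed, and since the map $f \mapsto \psi - f$ is continuous coordinatewise in the product topology, closedness of $P_2$ makes the second defining condition closed as well; thus each $C_\cQ$ is closed in $X$.

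The heart of the argument is verifying the finite intersection property of the family $\{C_\cQ\}$. Given finite measurable partitions $\cQ_1, \ldots, \cQ_n$, form their common refinement $\cQ$, which is itself a finite measurable partition of $J$. By hypothesis, $\psi/\cQ$ admits a decomposition $\psi^\cQ_1 + \psi^\cQ_2$ with $|\psi^\cQ_1(A)| \leq r(A)$ on $\cB_\cQ$. I extend $\psi^\cQ_1$ to a function $f \in X$ by setting $f(A) \coloneqq \psi^\cQ_1(A)$ for $A \in \cB_\cQ$ and $f(A) \coloneqq 0$ otherwise; the bound $r \geq 0$ keeps $f$ inside $X$. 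Because $\cQ$ refines each $\cQ_i$, we have $\cB_{\cQ_i} \subseteq \cB_\cQ$, so $f/\cQ_i$ is precisely the coarsening of $\psi^\cQ_1$ along the partition of the finite index set of $\cQ$ induced by $\cQ_i$. Applying the finitary property of $P_1$ at the finite level shows that this coarsening remains in $P_1$, and symmetrically $(\psi - f)/\cQ_i \in P_2$; hence $f \in \bigcap_{i=1}^n C_{\cQ_i}$.

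By compactness of $X$ combined with the finite intersection property, $\bigcap_\cQ C_\cQ$ is nonempty. Any $f$ in this intersection satisfies $f/\cQ \in P_1$ and $(\psi - f)/\cQ \in P_2$ for every finite measurable partition $\cQ$, so the finitary characterization upgrades this to $f \in P_1$ and $\psi - f \in P_2$, giving the sought decomposition $\psi = f + (\psi - f)$. The main obstacle I anticipate is the finite intersection step, where one must produce a single $f \in X$ witnessing membership in every $C_{\cQ_i}$ simultaneously; the common refinement handles this cleanly, but only because the finitary properties are preserved under coarsening within a finite algebra.
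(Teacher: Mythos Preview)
Your proof is correct and follows essentially the same compactness argument as the paper: both realize the candidate first components as points in the Tychonoff-compact product $\prod_{A\in\cB}[-r(A),r(A)]$, define closed sets indexed by finite measurable partitions via the closedness of $P_1$ and $P_2$, verify the finite intersection property by passing to the common refinement and invoking the finitary property to coarsen, and conclude using the finitary characterization once more. Your explicit extension of $\psi^{\cQ}_1$ by zero (together with the observation that $r\ge 0$) is a minor bookkeeping detail that the paper leaves implicit, but this is not a substantive difference in approach.
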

\begin{proof} 
For the `if' direction, we define
\begin{equation*}
    S\coloneqq \{\psi_{1}\colon \cB \to \bR \mid \psi_1 \in P_1, \psi-\psi_1\in P_2,\, |\psi_1(A)| \le r(A)\ \text{for all $A \in \cB$}\}.
\end{equation*} 
Similarly, for a finite measurable partition $\cQ$ let
\begin{equation*}
    S/\cQ \coloneqq \{\psi_{1}\colon \cB \to \bR \mid \psi_1/\cQ \in P_1, (\psi-\psi_1)/\cQ \in P_2,\, |(\psi_1/\cQ)(A)| \le r(A)\ \text{for all $A \in \cB_\cQ$}\}.
\end{equation*} 
We need to show that $S$ is nonempty. Observe that, since $P_1$ and $P_2$ are closed, $S/\cQ$ is closed for all finite measurable partitions $\cQ$, and since $P_1$ and $P_2$ are finitary, $S=\bigcap_\cQ S/\cQ$ where the intersection is taken over all finite measurable partitions. For finitely many partitions $\cQ_1,\ldots , \cQ_n$, we have $\cap_{i=1}^{n} S/\cQ_i \supseteq S/(\bigwedge_{i=1}^n \cQ_i)$ where $\bigwedge_{i=1}^n \cQ_i$ is the least common refinement of the partitions. By our assumption, $S/(\bigwedge_{i=1}^n \cQ_i)$ is nonempty. Therefore, the compactness of $\{f\colon \cB \to \bR \mid |f(A)| \le r(A) \text{ for all $A \in \cB$}\}$ implies that $S=\bigcap_{\cQ} S/\cQ$ is also nonempty.

For the `only if' direction, let $\psi=\psi_1+\psi_2$ be a $(P_1,P_2)$-decomposition. Then, for any $A\in\cB$,  we can choose $r(A)$ to be $|\psi_1(A)|$.
\end{proof}

\begin{cor} \label{cor:constant}
Let $(J,\cB)$ be a set algebra and $\psi\colon\cB\to\bR$ a bounded set function with $\psi(\emptyset)=0$. If there exists a $c\geq 1$ such that for every finite measurable partition $\cQ$, the quotient $\psi/\cQ$ can be written as the difference of an infinite-alternating function $\varphi$ and a nonnegative charge $\mu$ with $\max\{\|\varphi\|,\|\mu\|\}\leq c\cdot\|\psi/\cQ\|$, then $\psi$ can be written as the difference of an infinite-alternating function and a nonnegative charge. 
\end{cor}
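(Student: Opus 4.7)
The plan is to apply \cref{thm:p1p2} with the two properties $P_1=$ ``infinite-alternating'' and $P_2=$ ``negative of a nonnegative charge''. Indeed, a decomposition $\psi=\varphi-\mu$ with $\varphi$ infinite-alternating and $\mu$ a nonnegative charge is precisely a $(P_1,P_2)$-decomposition $\psi=\varphi+(-\mu)$ (both summands vanish at $\emptyset$, so they fit the framework).

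To invoke \cref{thm:p1p2}, I first verify that $P_1$ and $P_2$ are finitary and closed. For $P_1$, note that the infinite-alternating property is the conjunction, over all $k\geq 1$ and all tuples $A_0,\dots,A_k\in\cB$, of the linear inequality $V_\varphi(A_0;A_1,\dots,A_k)\le 0$. Any single such inequality depends only on the values of $\varphi$ on the finite algebra generated by $A_0,\dots,A_k$, hence is witnessed by $\varphi/\cQ$ for any partition $\cQ$ refining $\{A_0,\dots,A_k\}$; conversely, if $\varphi/\cQ\in P_1$ for every finite measurable $\cQ$, then every such inequality holds, so $\varphi\in P_1$. For a fixed partition $\cQ$, the set $\{f\colon\cB\to\bR\mid f/\cQ\in P_1\}$ is the intersection of finitely many closed half-spaces (one for each choice of $A_0,\dots,A_k\in\cB_\cQ$) in the product topology, hence closed. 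For $P_2$, observe that $\psi_2\in P_2$ iff $\psi_2(\emptyset)=0$, $\psi_2\le 0$, and $\psi_2(A\cup B)=\psi_2(A)+\psi_2(B)$ for every pair of disjoint $A,B\in\cB$; each such condition involves at most three sets and so reduces to a condition on $\psi_2/\cQ$ for $\cQ=\{A,B,J\setminus(A\cup B)\}$. This makes $P_2$ finitary. For fixed $\cQ$, the corresponding set of functions is again defined by finitely many linear equalities and inequalities in the values $\psi_2(C)$, $C\in\cB_\cQ$, so it is closed.

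Next I define the bounding function $r\colon\cB\to\bR$ as the constant $r(A)\coloneqq c\cdot\|\psi\|$, which is finite since $\psi$ is bounded. Fix any finite measurable partition $\cQ$. By hypothesis, $\psi/\cQ=\varphi-\mu$ with $\varphi$ infinite-alternating, $\mu$ a nonnegative charge, and $\max\{\|\varphi\|,\|\mu\|\}\le c\cdot\|\psi/\cQ\|$. Since $\cB_\cQ\subseteq\cB$, we have $\|\psi/\cQ\|\le\|\psi\|$, hence $|\varphi(A)|\le\|\varphi\|\le c\cdot\|\psi\|=r(A)$ for all $A\in\cB_\cQ$. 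Writing $\psi_1^\cQ=\varphi$ and $\psi_2^\cQ=-\mu$, this is the required bounded $(P_1,P_2)$-decomposition.

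Applying \cref{thm:p1p2}, there exist global set functions $\psi_1\in P_1$ and $\psi_2\in P_2$ with $\psi=\psi_1+\psi_2$. Setting $\varphi\coloneqq\psi_1$ (infinite-alternating) and $\mu\coloneqq-\psi_2$ (a nonnegative charge), we conclude $\psi=\varphi-\mu$. The main (and essentially only nontrivial) step is checking the finitary property; the subtlety there is that although the defining family of inequalities for $P_1$ is infinite, each individual inequality is finitary, which is exactly what allows the compactness argument encoded in \cref{thm:p1p2} to glue the partition-wise decompositions into a global one.
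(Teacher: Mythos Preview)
Your proof is correct and follows essentially the same route as the paper: apply \cref{thm:p1p2} with $P_1=$ ``infinite-alternating'' and $P_2=$ ``nonpositive, modular'', using the constant bounding function $r(A)=c\cdot\|\psi\|$. You simply spell out in more detail why these two properties are finitary and closed, and why $\|\psi/\cQ\|\le\|\psi\|$ gives the needed bound; the paper states these points without elaboration.
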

\begin{proof}
Assume that there exists $c\geq 1$ satisfying the conditions of the corollary. For each $A\in\cB$, set $r(A)$ to be $c\cdot\|\psi\|$. Since `infinite alternating' and `nonpositive, modular' are finitary and closed properties of set functions, the existence of the decomposition in question follows by the `if' direction of Theorem~\ref{thm:p1p2}. 
\end{proof}

%%%%%%%%%%%%%%%%%%%%%%%%%%%%%%%%
\subsection{Monotonic sum-decompositions}
\label{sec:sum}
%%%%%%%%%%%%%%%%%%%%%%%%%%%%%%%%

For every $c\in\bR_+$, we construct a submodular set function over a finite domain having no $c$-bounded monotonic sum-decomposition. We will need the following two technical lemmas.

\begin{lem}\label{lem:modular1}
Let $(J,\cB)$ be a set algebra and $\varphi \colon \cB \to \bR$ with $\varphi(\emptyset)=0$. Assume that $\varphi$ is modular on a pair $A,B\in\cB$ and let $\varphi=\varphi_1+\varphi_2$ be a monotonic sum-decomposition. Then, $\varphi_1$ and $\varphi_2$ are also modular on the pair $A,B$.
\end{lem}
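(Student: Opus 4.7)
The plan is to observe that the decomposition hypothesis gives us two individual submodular inequalities whose sum is the modularity equation for $\varphi$ on the pair $A,B$, forcing both to hold with equality.

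More concretely, I would proceed as follows. Both $\varphi_1$ and $\varphi_2$ are submodular by hypothesis (a monotonic sum-decomposition requires $\varphi_1$ increasing submodular and $\varphi_2$ decreasing submodular; the direction of monotonicity is irrelevant here). Applying submodularity to the pair $A,B$ gives the two inequalities
\[
\varphi_1(A)+\varphi_1(B)\ge \varphi_1(A\cap B)+\varphi_1(A\cup B),\qquad
\varphi_2(A)+\varphi_2(B)\ge \varphi_2(A\cap B)+\varphi_2(A\cup B).
\]
Summing these and using $\varphi=\varphi_1+\varphi_2$ yields $\varphi(A)+\varphi(B)\ge \varphi(A\cap B)+\varphi(A\cup B)$. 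The modularity of $\varphi$ on the pair $A,B$ asserts that this sum holds with equality, so each of the two submodular inequalities above must also be an equality. That is precisely the statement that $\varphi_1$ and $\varphi_2$ are modular on the pair $A,B$.

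There is essentially no obstacle; the only thing to be careful about is to note explicitly that a sum-decomposition into monotone pieces imposes submodularity on \emph{both} components (regardless of whether one is increasing and the other decreasing), so that two inequalities in the same direction are available to be summed.
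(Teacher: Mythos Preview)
Your proof is correct and essentially identical to the paper's: both arguments sum the two submodular inequalities for $\varphi_1$ and $\varphi_2$, observe that the total equals the modular identity for $\varphi$, and conclude that each inequality must be tight.
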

\begin{proof}
    Using the fact that $\varphi$ is modular on the pair $A,B$ and $\varphi=\varphi_1+\varphi_2$, we get
    \begin{align*}
    \varphi(A)+\varphi(B)
    {}&{}= \varphi_1(A)+\varphi_2(A)+\varphi_1(B)+\varphi_2(B)\\
    {}&{}\geq \varphi_1(A\cap B)+\varphi_1(A \cup B)+\varphi_2(A\cap B)+\varphi_2(A \cup B)\\
    {}&{}=\varphi(A\cap B)+\varphi(A\cup B)\\
    {}&{}=\varphi(A)+\varphi(B).
    \end{align*}
    Therefore, equality holds throughout, implying $\varphi_i(A)+\varphi_i(B)=\varphi_i(A\cap B)+\varphi_i(A\cup B)$ for $i\in[2]$. 
\end{proof}

\begin{lem}\label{lem:modular2}
    Let $J$ be a finite set and $\varphi \colon 2^J \to \bR$ submodular. Let $X\subseteq Y\subseteq J$ such that $\varphi$ is modular on any pair $A,B\in \cB$ with $X \subseteq A, B \subseteq Y$. Then, $\varphi(T)=\varphi(X) +\sum_{t \in T \setminus X} (\varphi(X+t)-\varphi(X))$ for any $X \subseteq T \subseteq Y$.
\end{lem}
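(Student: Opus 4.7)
The plan is to induct on $|T\setminus X|$. The base case $T=X$ is immediate, since both sides equal $\varphi(X)$. For the inductive step, assume the statement holds for all sets of the form $X\subseteq T'\subseteq Y$ with $|T'\setminus X|=k$, and let $T$ satisfy $X\subseteq T\subseteq Y$ with $|T\setminus X|=k+1$. Pick an arbitrary $t\in T\setminus X$ and set $T'\coloneqq T-t$.

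The key move is to apply the modularity hypothesis to the pair $A\coloneqq T'$ and $B\coloneqq X+t$. These sets both lie in the interval $[X,Y]$ (as $X\subseteq T'\subseteq Y$ and $X\subseteq X+t\subseteq Y$), and one checks $A\cap B=X$ (since $t\notin T'$ but $T'\supseteq X$) and $A\cup B=T$. By the assumed modularity,
\begin{equation*}
\varphi(T')+\varphi(X+t)=\varphi(X)+\varphi(T),
\end{equation*}
which rearranges to $\varphi(T)-\varphi(T')=\varphi(X+t)-\varphi(X)$. Combining this with the inductive hypothesis applied to $T'$ yields
\begin{equation*}
\varphi(T)=\varphi(T')+\bigl(\varphi(X+t)-\varphi(X)\bigr)=\varphi(X)+\sum_{s\in T\setminus X}\bigl(\varphi(X+s)-\varphi(X)\bigr),
\end{equation*}
completing the induction.

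I do not expect any real obstacle here; the only subtlety is verifying that the pair $(T-t,\,X+t)$ satisfies the hypothesis of the modularity assumption, i.e., both sets lie in $[X,Y]$ and meet in $X$. Submodularity of $\varphi$ itself is not even needed in the argument, as the assumed pointwise modularity on the interval is strong enough.
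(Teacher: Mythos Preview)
Your proof is correct and takes essentially the same approach as the paper: induction on the size of $T$ (equivalently, $|T\setminus X|$), peeling off one element $t\in T\setminus X$ and applying the modularity hypothesis to the pair $(T-t,\,X+t)$. Your observation that submodularity is not actually used is also accurate.
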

\begin{proof}
    We prove the statement by induction on $|T|$. The statement clearly holds when $|T|=|X|$ or $|T|=|X|+1$. Assume that $|T| \geq |X|+2$ and that we already proved the statement for all $X \subseteq T' \subseteq Y$ with $|T'|<|T|$. Pick an arbitrary element $x \in T \setminus X$. Using that $\varphi$ is modular on the pair $X+x,T-x$ and the induction hypothesis, we get
    \begin{align*}
        \varphi(T)
        {}&{}=\varphi(T-x)+\varphi(X+x)-\varphi(X)\\
        {}&{}=\varphi(X) +\sum_{t \in T \setminus X} \bigl(\varphi(X+t)-\varphi(X)\bigr),
    \end{align*}
    concluding the proof of the lemma.
\end{proof}

We now prove that there is no universal constant $c$ for which every submodular set function over a finite domain has a $c$-bounded monotonic sum-decomposition.

\begin{thm}\label{thm:finite_decomposition}
For any $c\in\bR_+$, there exists a submodular set function over a finite domain with value $0$ on the empty set that does not admit a $c$-bounded monotonic sum-decomposition.
\end{thm}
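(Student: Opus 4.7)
We want, for any given $c\in\bR_+$, a submodular $\varphi$ on a finite $J$ with $\varphi(\emptyset)=0$ such that every monotonic sum-decomposition $\varphi=\varphi_1+\varphi_2$ has $\max(\|\varphi_1\|,\|\varphi_2\|)>c\cdot\|\varphi\|$. Since $\varphi_1$ is increasing with $\varphi_1(\emptyset)=0$ we have $\|\varphi_1\|=\varphi_1(J)$, and since $\varphi_2$ is decreasing with $\varphi_2(\emptyset)=0$ we have $\|\varphi_2\|=\varphi_1(J)-\varphi(J)$; the whole fight is therefore to force $\varphi_1(J)$ to be at least roughly $c\cdot\|\varphi\|$ in \emph{every} valid decomposition.

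The engine of the argument is the rigidity supplied by \cref{lem:modular1,lem:modular2}. If $\varphi$ is modular on every pair inside an interval $[X_0,X_n]$, then Lemmas~\ref{lem:modular1}--\ref{lem:modular2} force $\varphi_1$ to be linear there, so $\varphi_1(X_n)-\varphi_1(X_0)=\sum_{t\in X_n\setminus X_0}\bigl(\varphi_1(X_0+t)-\varphi_1(X_0)\bigr)$. Because $\varphi_1$ is increasing and $\varphi_1\ge\varphi$ (the latter from $\varphi_2\le 0$), each summand is at least $\max\bigl(0,\varphi(X_0+t)-\varphi(X_0)\bigr)$. Hence
\[
\varphi_1(J)\;\ge\;\varphi_1(X_n)\;\ge\;\varphi_1(X_0)+\sum_{t\in X_n\setminus X_0}\max\bigl(0,\varphi(X_0+t)-\varphi(X_0)\bigr).
\]
The plan is to build $\varphi$ so that the right-hand side exceeds $c\cdot\|\varphi\|$. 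A naive attempt — making $\varphi$ modular throughout a single large interval — fails, because then $\max\varphi$ on the interval equals $\varphi(X_0)+\sum (\delta_t)^+$ exactly, giving ratio $\le 1$. The construction must therefore be subtler: several modular sub-intervals glued together, so that \cref{lem:modular1} provides many independent linearity relations on $\varphi_1$ that accumulate, while at the same time submodular ``cancellation'' across the gluings keeps $\max|\varphi|$ bounded by a constant independent of $c$.

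Concretely, I would look for $\varphi$ on a ground set $J$ of size polynomial in $c$, built by iteratively combining simple oscillating submodular pieces (e.g.\ matching-cut-like blocks or partition-matroid-based functions) through modular ``bridges'' between blocks; each block contributes one unit to the forced lower bound on $\varphi_1(J)$ via the argument above, while the submodular inequalities between blocks prevent $\|\varphi\|$ from scaling with the number of blocks. Taking the number of blocks $>c$ then yields $\varphi_1(J)>c\cdot\|\varphi\|$ for every admissible decomposition. The main obstacle — and the place where the real work lies — is calibrating the values of $\varphi$ on the ``junction'' sets so that both the modular-pair conditions (needed to activate \cref{lem:modular1,lem:modular2}) and the global submodular inequalities (needed to cap $\|\varphi\|$) hold simultaneously; verifying this will require a careful analysis of the induced second differences together with the equalities $\varphi(A)+\varphi(B)=\varphi(A\cap B)+\varphi(A\cup B)$ that drive the rigidity of $\varphi_1$.
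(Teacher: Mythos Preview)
Your proposal is a strategy outline, not a proof: you never actually produce the set function $\varphi$. You yourself flag this (``the place where the real work lies --- is calibrating the values of $\varphi$ on the `junction' sets''), and that missing construction is the entire content of the theorem. So as it stands there is a genuine gap.

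Your framework is close to the paper's in spirit --- it too uses \cref{lem:modular1,lem:modular2} to rigidify $\varphi_1$ --- but the mechanism you sketch is not quite the one that works. Your displayed bound
\[
\varphi_1(X_n)\ \ge\ \varphi_1(X_0)+\sum_{t\in X_n\setminus X_0}\max\bigl(0,\varphi(X_0+t)-\varphi(X_0)\bigr)
\]
is correct, but note that if $\varphi$ is modular on the whole interval $[X_0,X_n]$ then the right-hand side is at most $\max_{[X_0,X_n]}\varphi$, so this inequality alone can never beat $\|\varphi\|$. Stacking many such intervals end-to-end does not obviously help either: the $\varphi_1(X_0)$ term you need at the bottom of one interval is exactly the $\varphi_1(X_n)$ term at the top of the previous one, so the contributions telescope rather than accumulate.

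The paper's construction is simpler than what you envision. On $J=\{a_1,\dots,a_{2n}\}$ with $A=\{a_1,\dots,a_n\}$, set $\varphi(X)=0$ if $X\subseteq A$ or $A\subseteq X$, and $\varphi(X)=1$ otherwise; then $\|\varphi\|=1$. There are only \emph{two} modular intervals, $[\emptyset,A]$ and $[A,J]$, and on each of them all increments of $\varphi$ vanish --- so your inequality applied to either interval gives $0$. The leverage comes instead from a \emph{cross-interval} step: for each $i$, the pair $(A-a_i,\{a_{n+i}\})$ is modular for $\varphi$ (both sides equal $1$), so \cref{lem:modular1} gives the same equality for $\varphi_1$; combining with monotonicity ($\varphi_1(A+a_{n+i})\ge\varphi_1(A+a_{n+i}-a_i)$) and $\varphi_1\ge\varphi$ yields $\varphi_1(A+a_{n+i})-\varphi_1(A)\ge 1-\varphi_1(a_i)$. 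Summing over $i$ and using linearity of $\varphi_1$ on $[\emptyset,A]$ and on $[A,J]$ then gives $\varphi_1(J)\ge n$. The point is that the crucial modular pairs sit \emph{diagonally} across the two intervals, not inside either one; this is the idea your plan is missing.
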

\begin{proof}
Let $n$ be a positive integer, let $J=\{a_1, a_2, \ldots ,a_{2n}\}$ and set $A=\{a_1,a_2, \ldots , a_n\}$. We define a set function $\varphi\colon 2^J \to \bR$ by
\begin{equation}
\varphi(X) = \begin{cases}
0 &\text{if $X \subseteq A$ or $A \subseteq X$,}\\
1 &\text{otherwise.}
\end{cases}\label{eq:sum}    
\end{equation}

\begin{cl}\label{cl:sub}
    $\varphi$ is submodular.
\end{cl}
\begin{claimproof}
    Let $X,Y \subseteq J$ be arbitrary sets. We need to show that $\varphi(X)+\varphi(Y) \geq \varphi(X \cap Y)+\varphi(X \cup Y)$. The image of $\varphi$ is $\{0,1\}$, hence we distinguish three cases.
    
    The inequality clearly holds if $\varphi(X)=\varphi(Y)=1$.
    
    If $\varphi(X)=0$ and $\varphi(Y)=1$, then either $X \subseteq A$ implying $\varphi(X \cap Y)=0$, or $A \subseteq X$ implying $\varphi(X \cup Y)=0$. The case when $\varphi(X)=1$ and $\varphi(Y)=0$ can be analyzed in a similar way.

    If $\varphi(X)=\varphi(Y)=0$, then either $A \subseteq X,Y$,  or $X,Y \subseteq A$,  or $X \subseteq A \subseteq Y$, or $Y \subseteq A \subseteq X$. In all four cases, we have $\varphi(X\cap Y)=\varphi(X\cup Y)=0$, finishing the proof of the claim.
\end{claimproof}

Let $\varphi=\varphi_1+\varphi_2$ be an arbitrary monotonic sum-decomposition. 

\begin{cl}\label{cl:n}
$\varphi_1(J)\geq n$.    
\end{cl}
\begin{claimproof}
Using Lemma~\ref{lem:modular1} for the sets $A-a_i$ and $\{a_{n+i}\}$, the monotonicity of $\varphi_1$, and $\varphi_1 \geq \varphi$ we obtain
\begin{align*}
\varphi_1(A+a_{n+i}) 
{}&{}\geq \varphi_1(A+a_{n+i}-a_i)\\
{}&{}=\varphi_1(A-a_i)+\varphi_1(a_{n+i})-\varphi_1(\emptyset) \\
{}&{}\geq \varphi_1(A-a_i)+1    
\end{align*}
for all $i\in[n]$. Note that the pair $(\emptyset, A)$ satisfies the conditions of Lemma~\ref{lem:modular2} for $\varphi$. Therefore, by Lemma~\ref{lem:modular1},  the same holds for $\varphi_1$ as well. This implies $\varphi_1(A-a_i)=\varphi_1(A)-\varphi_1(a_i)$ and $\varphi_1(A)=\sum_{i=1}^n \varphi_1(a_i)$. On the other hand, the pair $(A,J)$ also satisfies the conditions of Lemma~\ref{lem:modular2}, thus we get
\begin{align*}
    \varphi_1(J)
    {}&{}=\varphi_1(A)+\sum_{i=1}^n \bigl(\varphi_1(A+a_{n+i})-\varphi_1(A)\bigr)\\
    {}&{}\geq \varphi_1(A)+\sum_{i=1}^n \bigl(\varphi_1(A-a_i)+1-\varphi_1(A)\bigr)\\
    {}&{}= \varphi_1(A)+\sum_{i=1}^n \bigl(\varphi_1(A)-\varphi_1(a_i)+1-\varphi_1(A)\bigr)\\
    {}&{}=n,
\end{align*}
completing the proof of the claim.    
\end{claimproof}

Since $n$ was chosen arbitrarily, the theorem follows by Claim~\ref{cl:n}. 
\end{proof}

\begin{cor}\label{cor:sum}
There exists a bounded submodular set function with value $0$ on the empty set that does not admit a monotonic sum-decomposition.
\end{cor}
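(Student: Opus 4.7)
The plan is to assemble the finite counterexamples of \cref{thm:finite_decomposition} into a single bounded submodular function on an infinite ground set. The increasing part of any hypothetical monotonic sum-decomposition will then be forced to absorb unboundedly large contributions coming from the individual blocks, which is incompatible with it being real-valued.

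For each $k \geq 1$, let $\varphi_{n_k} \colon 2^{J_{n_k}} \to \bR$ denote the submodular function produced by \cref{thm:finite_decomposition}, satisfying $|J_{n_k}| = 2n_k$, $\|\varphi_{n_k}\| = 1$, and $\varphi_{n_k}(\emptyset) = 0$. I would choose $n_k := k^3$, take $J := \bigsqcup_{k \geq 1} J_{n_k}$ with $\cB := 2^J$, and define
\[
\psi(X) := \sum_{k \geq 1} \tfrac{1}{k^2}\, \varphi_{n_k}(X \cap J_{n_k}).
\]
Then $\psi(\emptyset) = 0$ and $|\psi(X)| \leq \sum_k 1/k^2 < \infty$, so $\psi$ is bounded. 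Submodularity follows from the fact that each map $X \mapsto X \cap J_{n_k}$ is a lattice homomorphism and a nonnegative combination of submodular functions is submodular.

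The main step is a block-by-block restriction. Suppose, for contradiction, that $\psi = \psi_1 + \psi_2$ is a monotonic sum-decomposition. For $X \subseteq J_{n_k}$, one has $\psi(X) = k^{-2}\,\varphi_{n_k}(X)$, because $\varphi_{n_j}(\emptyset) = 0$ for every $j \neq k$. The restrictions $\psi_i|_{2^{J_{n_k}}}$ remain increasing (respectively decreasing) and submodular and vanish on $\emptyset$, so after scaling by $k^2$ they constitute a monotonic sum-decomposition of $\varphi_{n_k}$. Applying \cref{cl:n} then gives $k^2\,\psi_1(J_{n_k}) \geq n_k$, i.e., $\psi_1(J_{n_k}) \geq k$. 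Monotonicity of $\psi_1$ yields $\psi_1(J) \geq \psi_1(J_{n_k}) \geq k$ for every $k$, contradicting $\psi_1(J) \in \bR$.

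The only delicate ingredient is calibrating the weights: one needs $\sum_k w_k$ finite (to keep $\psi$ bounded) while $w_k \cdot n_k$ diverges (so the lower bound from \cref{cl:n} still explodes). Placing the weights on a sufficiently sparse sequence of block sizes, as with the choice $w_k = 1/k^2$ and $n_k = k^3$ above, settles this at once; this is the only point requiring any thought, and no deeper argument is needed.
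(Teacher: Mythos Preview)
Your proof is correct. The restriction of a monotonic sum-decomposition of $\psi$ to each block $J_{n_k}$ indeed yields a monotonic sum-decomposition of $k^{-2}\varphi_{n_k}$, and the invocation of \cref{cl:n} together with the monotonicity of $\psi_1$ gives the contradiction $\psi_1(J)\ge k$ for all $k$. The calibration $w_k=1/k^2$, $n_k=k^3$ is exactly what is needed.

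Your route, however, is genuinely different from the paper's. The paper does not take a weighted disjoint union; instead it exhibits a single $\{0,1\}$-valued function $\psi$ on $2^{\bZ}$ (namely $\psi(X)=0$ if $X\subseteq 2\bZ$ or $2\bZ\subseteq X$, and $\psi(X)=1$ otherwise) and observes that for every $n$ there is a finite partition $\cQ$ of $\bZ$ for which $\psi/\cQ$ coincides with the finite counterexample $\varphi$ of \cref{thm:finite_decomposition}. The nonexistence of a decomposition then follows from the ``only if'' direction of \cref{thm:p1p2}: a decomposition of $\psi$ would pass to the quotients with a uniform bound $r(\bZ)=\psi_1(\bZ)$, contradicting \cref{cl:n}. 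Your argument is more elementary in that it bypasses the quotient/compactness machinery of \cref{thm:p1p2} entirely and works by direct restriction; the price is a somewhat less natural example (an infinite weighted sum rather than a single explicit $\{0,1\}$-valued function). Conversely, the paper's construction shows that the phenomenon already occurs for a very simple set function, and it illustrates how \cref{thm:p1p2} is meant to be used.
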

\begin{proof}
Let us define a set function over the subsets of $\bZ$ as
\begin{equation*}
    \psi(X)=\begin{cases}
        0 & \text{if $X\subseteq 2\cdot\bZ$ or $2\cdot\bZ\subseteq X$,}\\
        1 & \text{otherwise.}
    \end{cases}
\end{equation*}
Then, $\psi$ is a bounded submodular set function -- this can be seen similarly as in the proof of Claim~\ref{cl:sub}. It is not difficult to check that for every $n\in\bZ_+$, there exists a finite partition $\cQ$ of $\bZ$ with $n$ partition classes such that the set function $\varphi$ defined in \eqref{eq:sum} is equal to $\psi/\cQ$. By Theorems~\ref{thm:p1p2} and~\ref{thm:finite_decomposition}, $\psi$ does not admit a monotonic sum-decomposition, since the value of $r(J)$ cannot be set to any finite value.
\end{proof}

\begin{rem}
The question naturally arises whether symmetric submodular set functions have monotonic sum-decompositions. Let $\varphi$ be the set function appearing in the proof of Theorem~\ref{thm:finite_decomposition} and let $\tilde{\varphi}$ denote its symmetrized version, that is, $\tilde{\varphi}(X)\coloneqq\varphi(X)+\varphi(J\setminus X)$ for all $X\in2^J$. Then, $\tilde{\varphi}$ is a symmetric submodular set function, and an analogous proof shows that for any monotonic sum-decomposition $\tilde{\varphi}=\varphi_1+\varphi_2$, we have $\varphi_1(J)\geq n$.
\end{rem}

%%%%%%%%%%%%%%%%%%%%%%%%%%%%%%%%
\subsection{Monotonic diff-decompositions}
\label{sec:difference}
%%%%%%%%%%%%%%%%%%%%%%%%%%%%%%%%

We now show that the infinite counterpart of Theorem~\ref{thm:finite_diff-decomp} does not hold. In particular, we show that a submodular set function does not necessarily have a monotonic diff-decomposition.

\begin{thm}\label{thm:finite_decomposition_diff}
For any $c\in\bR_+$, there exists a submodular set function over a finite domain with value $0$ on the empty set that does not admit a $c$-bounded monotonic diff-decomposition.
\end{thm}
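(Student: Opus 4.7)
Plan: Fix $c \in \R_+$, pick an integer $m > c$, and let $J = [m]$. Define $\varphi \colon 2^J \to \R$ by $\varphi(X) = 1$ if $\emptyset \subsetneq X \subsetneq J$ and $\varphi(X) = 0$ otherwise; equivalently, $\varphi$ is the cut function of the hypergraph on $J$ with the single hyperedge $J$. A short case distinction on the possible values of $\varphi \in \{0,1\}$, entirely analogous to the submodularity check in the proof of \cref{thm:finite_decomposition}, shows that $\varphi$ is submodular, and clearly $\|\varphi\| = 1$. I will show that any monotonic diff-decomposition $\varphi = \varphi_1 - \varphi_2$ forces $\varphi_1(J) \geq m$; since $\|\varphi_1\| = \varphi_1(J)$ by monotonicity and nonnegativity, this immediately yields $\|\varphi_1\| \geq m > c\cdot\|\varphi\|$ and rules out any $c$-bounded monotonic diff-decomposition.

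Given such a decomposition, the first step is a symmetrization reduction: since $\varphi$ is invariant under the permutation action of $S_m$ on $J$, averaging each $\varphi_i$ over this action produces increasing submodular functions $\widetilde{\varphi}_i$ with $\widetilde{\varphi}_i(\emptyset) = 0$, $\widetilde{\varphi}_1 - \widetilde{\varphi}_2 = \varphi$, $\widetilde{\varphi}_i$ depending only on $|X|$, and $\widetilde{\varphi}_i(J) = \varphi_i(J)$. Hence we may assume $\varphi_1$ and $\varphi_2$ are themselves symmetric. Writing $\alpha_k := \varphi_1(X)$ and $\beta_k := \varphi_2(X)$ for $|X| = k$, and using the well-known equivalence between symmetric submodular set functions on a finite ground set and concave functions of cardinality, the hypotheses on $\varphi_1$ and $\varphi_2$ reduce to: $\alpha_0 = \beta_0 = 0$, both sequences nondecreasing and concave in $k$, and $\alpha_k - \beta_k = \mathbf{1}[0 < k < m]$.

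The heart of the argument is to read off the right consequences of concavity of $\beta$ at the two boundary indices $k = 1$ and $k = m-1$, where the interior formula $\beta_k = \alpha_k - 1$ meets the jumps $\beta_0 = 0$ and $\beta_m = \alpha_m$. The inequality $\beta_0 + \beta_2 \leq 2\beta_1$ rearranges to $\alpha_2 \leq 2\alpha_1 - 1$, and $\beta_{m-2} + \beta_m \leq 2\beta_{m-1}$ rearranges to $\alpha_m \leq 2\alpha_{m-1} - \alpha_{m-2} - 1$. Introducing the successive differences $s_k := \alpha_{k+1} - \alpha_k$, these become $s_1 \leq s_0 - 1$ and $s_{m-1} \leq s_{m-2} - 1$, while concavity of $\alpha$ and monotonicity of $\varphi_1$ give $s_0 \geq s_1 \geq \cdots \geq s_{m-1} \geq 0$.

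Chaining these backward from $s_{m-1} \geq 0$ forces $s_{m-2} \geq 1$, hence $s_k \geq 1$ for all $1 \leq k \leq m-2$ by the monotone decrease of the $s_k$'s, and finally $s_0 \geq s_1 + 1 \geq 2$. Telescoping, $\varphi_1(J) = \alpha_m = \sum_{k=0}^{m-1} s_k \geq 2 + (m-2) + 0 = m$, which is the desired bound. The main subtlety will be setting up the symmetrization cleanly (in particular verifying that averaging preserves the decomposition identity, monotonicity, submodularity, and the quantity $\|\varphi_i\| = \varphi_i(J)$) and correctly extracting the ``jump-corrected'' concavity constraints at the two boundary indices; once those are in place, the step-counting conclusion is elementary.
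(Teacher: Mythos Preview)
Your proof is correct, but it takes a different route from the paper's. The paper uses the simpler test function $\varphi(X) = -\mathbbm{1}(X = J)$ on $J = [n]$ and argues directly, without any symmetrization: for any diff-decomposition $\varphi = \varphi_1 - \varphi_2$, the chain
\[
\varphi_2(A+a) - \varphi_2(A) \;\ge\; \varphi_2(J) - \varphi_2(J-a) \;=\; \bigl(\varphi_1(J)-\varphi_1(J-a)\bigr) - \bigl(\varphi(J)-\varphi(J-a)\bigr) \;\ge\; 1
\]
(using submodularity of $\varphi_2$ and monotonicity of $\varphi_1$) gives $\varphi_2(J) \ge n$ by a single telescoping sum. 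Your argument instead uses the hyperedge-cut function $\varphi(X) = \mathbbm{1}(\emptyset \subsetneq X \subsetneq J)$, symmetrizes to reduce to concave sequences, and extracts the bound from boundary concavity inequalities of $\beta$ at $k=1$ and $k=m-1$. Both reach the same quantitative conclusion $\varphi_1(J) \ge m$, but the paper's path is shorter: in fact its telescoping argument applies verbatim to your test function (since $\varphi(J-a)-\varphi(J)=1$ there too), bypassing the symmetrization entirely. What your approach buys is a pleasant reduction to a one-dimensional problem; what it costs is the extra machinery of the averaging step and the sequence analysis.
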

\begin{proof}
Let $n$ be a positive integer and let $J=\{a_1, a_2, \ldots ,a_n\}$. We define a set function $\varphi\colon 2^J \to \bR$ by
\begin{equation}
\varphi(X) = \begin{cases}
-1 &\text{if $X = J$,}\\
0 &\text{otherwise.}
\end{cases}\label{eq:diff}    
\end{equation}
The function $\varphi$ is clearly submodular. Let $\varphi=\varphi_1-\varphi_2$ be an arbitrary monotonic diff-decomposition. 

\begin{cl}\label{cl:nn}
$\varphi_2(J)\geq n$.    
\end{cl}
\begin{claimproof}
For any $a \in J$ and any $A \subseteq J$ with $a \notin A$, we have 
\begin{align*}
\varphi_2(A+a)-\varphi_2(A)
{}&{}\geq \varphi_2(J)-\varphi_2(J-a)\\
{}&{}=\bigl(\varphi_1(J)-\varphi_1(J-a)\bigr)-\bigl(\varphi(J)-\varphi(J-a)\bigr)\\
{}&{}\geq \varphi(J-a)-\varphi(J)\\
{}&{}=1, 
\end{align*}
where the first inequality holds by the submodularity of $\varphi_2$, the first equality holds by $\varphi=\varphi_1-\varphi_2$, the second inequality holds by the monotonicity of $\varphi_1$, and the last equality holds by the definition of $\varphi$. Thus
\begin{align*}
\varphi_2(J)
{}&{}=\varphi_2(a_1)-\varphi_2(\emptyset)+\sum_{i=2}^n \varphi_2\Bigl(\bigl(\{a_1, a_2, \ldots , a_i\}\bigr)-\varphi_2\bigl(\{a_1, a_2, \ldots , a_{i-1}\}\bigr)\Bigr)\\
{}&{}\geq n,    
\end{align*}
finishing the proof of the claim.    
\end{claimproof}

Since $n$ was chosen arbitrarily, the theorem follows by Claim~\ref{cl:nn}. 
\end{proof}

\begin{cor}\label{cor:diff}
There exists a bounded submodular set function with value $0$ on the empty set that does not admit a monotonic diff-decomposition.
\end{cor}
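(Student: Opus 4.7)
The plan is to promote the finite counterexample from Theorem~\ref{thm:finite_decomposition_diff} to the infinite setting, mirroring exactly the way Corollary~\ref{cor:sum} is obtained from Theorem~\ref{thm:finite_decomposition}. The function $\varphi$ defined by \eqref{eq:diff} depends only on whether its argument equals the entire ground set, so it has a very natural ``maximal'' lift to subsets of $\bZ$.

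First I would define $\psi\colon 2^{\bZ}\to\bR$ by $\psi(\bZ)=-1$ and $\psi(X)=0$ for every $X\subsetneq\bZ$. Clearly $\psi$ is bounded and $\psi(\emptyset)=0$. A short case analysis verifies submodularity: the inequality $\psi(X)+\psi(Y)\ge \psi(X\cap Y)+\psi(X\cup Y)$ is trivially satisfied unless the value $-1$ appears on at least one side, and checking the remaining subcases (either $X=Y=\bZ$, or exactly one of $X,Y$ equals $\bZ$, or $X\cup Y=\bZ$ with both $X,Y$ proper) shows that the two sides are always equal or differ by $1$ in the correct direction.

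Next I would observe that for every integer $n\ge 2$ and every finite measurable partition $\cQ=\{Q_1,\dots,Q_n\}$ of $\bZ$ into nonempty classes, the quotient $\psi/\cQ\colon 2^{[n]}\to\bR$ coincides with the function $\varphi$ from \eqref{eq:diff} on $J=[n]$: indeed $(\psi/\cQ)(I)=-1$ precisely when $I=[n]$, and $0$ otherwise. Now suppose for contradiction that $\psi=\psi_1-\psi_2$ is a monotonic diff-decomposition. Taking the quotient by $\cQ$, the identity $\psi/\cQ=\psi_1/\cQ-\psi_2/\cQ$ is a monotonic diff-decomposition on $2^{[n]}$; the crucial point is that $(\psi_1/\cQ)([n])=\psi_1(\bZ)$ is a single real number independent of $\cQ$. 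Applying Claim~\ref{cl:nn} from the proof of Theorem~\ref{thm:finite_decomposition_diff} to this quotient yields $(\psi_2/\cQ)([n])\ge n$, so
\[
\psi_1(\bZ)=(\psi_1/\cQ)([n])=(\psi/\cQ)([n])+(\psi_2/\cQ)([n])\ge n-1.
\]
Taking $n$ larger than $\psi_1(\bZ)+1$ yields the desired contradiction.

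There is essentially no genuine obstacle beyond the finite statement: the hard work has already been carried out in Theorem~\ref{thm:finite_decomposition_diff}, and the only points requiring care are confirming that $\psi$ itself is submodular and that monotonic diff-decompositions descend to quotients. An equivalent but slicker route would be to invoke Theorem~\ref{thm:p1p2} directly with $P_1$ the class of increasing submodular set functions vanishing on $\emptyset$ and $P_2$ the class of decreasing supermodular set functions vanishing on $\emptyset$, both of which are finitary and closed; the nonexistence of a uniform bound $r$ then follows from the growing lower bound on $\psi_1(\bZ)$ extracted from Claim~\ref{cl:nn}.
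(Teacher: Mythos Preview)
Your proposal is correct and follows essentially the same approach as the paper: define the same set function $\psi$ on $2^{\bZ}$, observe that every quotient $\psi/\cQ$ with $n$ classes reproduces the finite counterexample from \eqref{eq:diff}, and then conclude via the unbounded lower bound from Claim~\ref{cl:nn}. The paper phrases the final step through Theorem~\ref{thm:p1p2} (exactly your ``slicker route''), whereas you also spell out the equivalent direct contradiction by pushing a hypothetical decomposition down to the quotient; both arguments are the same in substance.
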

\begin{proof}
Let us define a set function over the subsets of $\bZ$ as
\begin{equation*}
    \psi(X)=\begin{cases}
        -1 & \text{if $X=\bZ$,}\\
        0 & \text{otherwise.}
    \end{cases}
\end{equation*}
Then, $\psi$ is a bounded submodular set function, and it is not difficult to check that for every $n\in\bZ_+$, there exists a finite partition $\cQ$ of $\bZ$ with $n$ partition classes such that the set function $\varphi$ defined in \eqref{eq:diff} is equal to $\psi/\cQ$. By Theorems~\ref{thm:p1p2} and~\ref{thm:finite_decomposition_diff}, $\psi$ does not admit a monotonic diff-decomposition, since the value of $r(J)$ cannot be set to any finite value.
\end{proof}

\begin{rem}
Similarly to the case of sum-decompositions, the question naturally arises whether symmetric submodular set functions have monotonic diff-decompositions. Let $\varphi$ be the set function appearing in the proof of Theorem~\ref{thm:finite_decomposition_diff} and let $\tilde{\varphi}$ denote its symmetrized version shifted to satisfy $\tilde{\varphi}(\emptyset)=0$, that is, $\tilde{\varphi}(X)\coloneqq\varphi(X)+\varphi(J\setminus X)+1$ for all $X\in2^J$. Then, $\tilde{\varphi}$ is a symmetric submodular set function, and an analogous proof shows that for any monotonic diff-decomposition $\tilde{\varphi}=\varphi_1-\varphi_2$ we have $\varphi_1(J)\geq n$.
\end{rem}

%%%%%%%%%%%%%%%%%%%%%%%%%%%%%%%%
\section{Weakly infinite-alternating set functions}
\label{sec:strongly}
%%%%%%%%%%%%%%%%%%%%%%%%%%%%%%%%

Note that monotone decompositions are not interesting for infinite-alternating set functions since those are also increasing. However, weakly infinite-alternating set functions differ from infinite-alternating ones precisely in relaxing the monotonicity condition. Our main result is proving that, unlike general submodular set functions, weakly infinite-alternating set functions admit monotonic sum- and diff-decompositions, where $\varphi_1$ can be chosen to be infinite-alternating while $\varphi_2$ can be chosen to be a charge. In particular, we show that weakly infinite-alternating set functions over a finite domain have $7$-bounded monotonic sum- and diff-decompositions, which in turn implies the desired result. We first show that every weakly infinite-alternating set function over a finite set $J$ can be obtained as the difference of an infinite-alternating set function and a nonnegative charge, a result that is interesting on its own.

\begin{lem}\label{lem:quasi_decomposition}
    Let $J$ be a finite set. A function $\psi \colon 2^J \to \bR$ is weakly infinite-alternating if and only if it can be written as $\psi=\varphi-\mu$,  where $\varphi$ is infinite-alternating and $\mu$ is a nonnegative charge.
\end{lem}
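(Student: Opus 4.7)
The plan is to prove both directions separately; the $(\Leftarrow)$ direction will be immediate from the fact (\cref{ex:modular}) that every nonnegative charge $\mu$ satisfies $V_\mu(A_0; A_1, \ldots, A_k) = 0$ for every $k \geq 2$ and every pairwise disjoint tuple. Given $\psi = \varphi - \mu$ with $\varphi$ infinite-alternating and $\mu$ a nonnegative charge, the linearity of $V$ together with $V_\varphi \le 0$ everywhere and $V_\mu = 0$ on pairwise disjoint tuples with $k \ge 2$ yields $V_\psi \le 0$ for the same inputs, and $\psi(\emptyset) = 0$ follows from $\varphi(\emptyset) = \mu(\emptyset) = 0$, so $\psi$ is weakly infinite-alternating.

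The content is in the $(\Rightarrow)$ direction. The key observation I will exploit is that adding a modular set function to $\psi$ does not change any $V_\psi(A_0; A_1, \ldots, A_k)$ with $k \ge 2$ evaluated on pairwise disjoint tuples, again by \cref{ex:modular}. Hence, if I can find a nonnegative charge $\mu$ for which $\varphi \coloneqq \psi + \mu$ becomes increasing --- equivalently, weakly $1$-alternating, as the weak and strong notions of monotonicity coincide --- then $\varphi$ will be weakly $k$-alternating for every $k \ge 1$, and \cref{lem:weak_strong_alternating_connection} will let me conclude that $\varphi$ is infinite-alternating. Setting $\mu$ as the second summand then gives the desired decomposition $\psi = \varphi - \mu$.

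Constructing such a $\mu$ is routine because $J$ is finite and hence $\psi$ is bounded. I will take $\mu(X) = c \cdot |X|$ for a constant $c \ge 0$ chosen so that $c \ge \psi(X) - \psi(X+a)$ holds for every $X \subseteq J$ and every $a \in J \setminus X$; the maximum of finitely many real numbers makes this possible. The set function $\mu$ so defined is a nonnegative charge, and the choice of $c$ guarantees that the marginals $\varphi(X+a) - \varphi(X) = \psi(X+a) - \psi(X) + c$ are nonnegative, so $\varphi$ is increasing. No real obstacle arises: the whole argument hinges on the single observation that a modular correction is invisible to the higher-order alternating functional on pairwise disjoint tuples, so monotonicity can be restored without destroying any of the inequalities already guaranteed by weak infinite-alternatingness.
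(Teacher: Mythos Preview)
Your proof is correct and follows essentially the same approach as the paper: for $(\Leftarrow)$ you both invoke \cref{ex:modular}, and for $(\Rightarrow)$ you both add a sufficiently large multiple of $|X|$ to $\psi$ to make it increasing and then observe that the result is weakly $k$-alternating for all $k\ge 1$, hence infinite-alternating via \cref{lem:weak_strong_alternating_connection}. The only cosmetic difference is that the paper picks the explicit constant $c=2\|\psi\|$ whereas you take the minimal working $c$.
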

\begin{proof}
    By \cref{ex:modular}, $\mu$ satisfies \eqref{eq:kalt} with equality for $k\geq 2$ on pairwise disjoint sets, implying that $\varphi-\mu$ is weakly infinite-alternating.

    For the other direction, assume that $\psi$ is weakly infinite-alternating. Define $\mu(X)=2 |X| \cdot \|\psi\|$ for all $X \subseteq J$, which is clearly a nonnegative charge. By definition, $\varphi=\psi+\mu$ is increasing. Furthermore, both $\psi$ and $\mu$ are weakly infinite-alternating. These together imply that $\varphi$ is in fact an infinite-alternating set function.
\end{proof}

Our main result is the following.

\begin{thm}\label{thm:weakly}
    Every weakly infinite-alternating set function over a finite domain has $7$-bounded monotonic sum- and diff-decompositions. 
\end{thm}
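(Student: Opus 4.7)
My plan is to produce a single decomposition of the form $\psi = \varphi - \mu$ with $\varphi$ infinite-alternating and $\mu$ a nonnegative charge, satisfying $\max\{\|\varphi\|,\|\mu\|\} \leq 7\|\psi\|$. This immediately yields a $7$-bounded diff-decomposition; rewriting $\psi = \varphi + (-\mu)$, it also yields a $7$-bounded sum-decomposition, since $-\mu$ is a nonpositive charge, hence a decreasing modular set function.

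First, $\psi$ is submodular: applying the weakly $2$-alternating condition to the pairwise disjoint sets $A_0 = X\cap Y$, $A_1 = X\setminus Y$, $A_2 = Y\setminus X$ gives exactly $\psi(X)+\psi(Y)\geq\psi(X\cap Y)+\psi(X\cup Y)$. Since $\mu$ is modular, $V_\mu(A_0;A_1,\dots,A_k) = 0$ for all $k\geq 2$ (Example~\ref{ex:modular}), so the submodular function $\varphi = \psi+\mu$ satisfies the weakly $k$-alternating inequality on pairwise disjoint sets for every $k\geq 2$; by Lemma~\ref{lem:weak_strong_alternating_connection}, the only remaining requirement for $\varphi$ to be infinite-alternating is monotonicity. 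Since $\psi$ is submodular, the minimum of the marginal $\psi(X+v)-\psi(X)$ over $X \not\ni v$ is attained at $X = J-v$, so monotonicity of $\varphi$ reduces to $\mu(v) \geq \psi(J-v)-\psi(J)$ for every $v \in J$. I therefore take the pointwise smallest such nonnegative charge, $\mu(v) \coloneqq \max\{0,\,\psi(J-v)-\psi(J)\}$, and set $\varphi \coloneqq \psi+\mu$.

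The crucial step is then to prove $\|\mu\| \leq 6\|\psi\|$; this immediately gives $\|\varphi\| = \varphi(J) = \psi(J)+\mu(J) \leq \|\psi\| + 6\|\psi\| = 7\|\psi\|$. Writing $V^- = \{v : \psi(J-v) > \psi(J)\}$, we have $\|\mu\| = \sum_{v\in V^-}(\psi(J-v)-\psi(J))$. To bound this sum, I would apply the weakly $|T|$-alternating inequality with $A_0 = J\setminus T$ and $A_i=\{v_i\}$ for $T = \{v_1,\dots,v_k\} \subseteq V^-$ and $k\geq 2$. Setting $f(C) \coloneqq \psi(J\setminus C)$ for $C \subseteq V^-$, these inequalities take the Möbius-type form $(-1)^{|T|}\sum_{C\subseteq T}(-1)^{|C|}f(C) \leq 0$. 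The plan is to combine these bounds across varying $T$---for instance by inclusion-exclusion or an averaging argument---to isolate the first-order contribution $\sum_{v\in V^-}(f(\{v\})-f(\emptyset)) = \|\mu\|$ while controlling the higher-order terms by a small multiple of $\|\psi\|$.

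The hard part is this final combinatorial bound on $\|\mu\|$. The weakly infinite-alternating hypothesis must enter essentially: Theorem~\ref{thm:finite_decomposition} rules out any universal constant bound of this form for arbitrary submodular set functions. As a sanity check, Example~\ref{ex:cutfn} with $\psi = d_w$ yields $\mu(v) = d_w(v)$ (the weighted degree of $v$), so $\|\mu\| = 2w(E)$, while $\|d_w\|$ is at least $w(E)/2$ by the classical random-cut argument; this gives a $4$-bounded decomposition, so the slack up to $7$ leaves room for a general argument based on the Möbius-type inequalities above.
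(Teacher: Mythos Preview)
Your setup is correct and in fact coincides with the paper's: the paper also writes $\psi=\varphi-\mu$ with $\varphi$ infinite-alternating and $\mu$ a nonnegative charge, and normalizes so that $\mu(a)\cdot\alpha_{\{a\}}=0$ for every $a$ in the extremal decomposition $\varphi=\sum_{A}\alpha_A\varphi_A$. A short computation shows $\alpha_{\{a\}}=\varphi(J)-\varphi(J-a)$, so this normalization is exactly your pointwise-minimal choice $\mu(v)=\max\{0,\psi(J-v)-\psi(J)\}$.

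The gap is the bound $\|\mu\|\le 6\|\psi\|$ (equivalently $\varphi(J)\le 6\|\psi\|$), which you acknowledge as the hard part but do not prove. Your plan---combine the M\"obius-type inequalities $(-1)^{|T|}\sum_{C\subseteq T}(-1)^{|C|}f(C)\le 0$ for $T\subseteq V^-$ ``by inclusion-exclusion or an averaging argument''---is too vague to assess, and it is not clear how to isolate the linear term $\sum_{v\in V^-}\bigl(f(\{v\})-f(\emptyset)\bigr)$ with only a constant-factor loss. The paper's argument is quite different and uses the extremal-ray structure explicitly: on the support $J'$ of $\mu$ one has $\alpha_{\{a\}}=0$ for every $a\in J'$ (this is exactly your minimality), so every $A\subseteq J'$ with $\alpha_A>0$ has $|A|\ge 2$. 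A uniformly random $X\subseteq J'$ therefore hits each such $A$ with probability at least $3/4$, giving a set $T\subseteq J'$ with $\psi(T\cup(J\setminus J'))\ge \tfrac34\varphi(J)-\tfrac12\mu(J)$. Combined with the trivial bound $\|\psi\|\ge|\varphi(J)-\mu(J)|$, a two-case analysis on whether $\mu(J)\gtrless\tfrac76\varphi(J)$ yields $\varphi(J)\le 6\|\psi\|$. The point you are missing is that the $3/4$ (rather than $1/2$) comes precisely from the ``no singletons'' feature of the minimal $\mu$; without exploiting this, an averaging argument over subsets of $V^-$ only gives $\|\psi\|\ge\tfrac12\varphi(J)-\tfrac12\mu(J)$, which is useless together with $\|\psi\|\ge\mu(J)-\varphi(J)$.
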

\begin{proof}
Let $J$ be a finite set and $\psi\colon2^J\to\bR$ be a weakly infinite-alternating set function. Let $\psi=\varphi-\mu$ be a decomposition of $\psi$ provided by Lemma~\ref{lem:quasi_decomposition}. By Proposition~\ref{prop:extremal}, $\varphi$ can be written in the form $\varphi=\sum_{\emptyset \neq A\subseteq J}\alpha_A\varphi_A$, where $\varphi_A$ is defined as in \eqref{eq:extremal_def}. Since $\varphi_{\{a\}}$ is modular for any $a\in J$, by decreasing $\alpha_{\{a\}}$ and increasing $\mu$ for all set containing $a$ by the same constant, we may assume that $\mu(a) \cdot \alpha_{\{a\}}=0$ for all $a \in J$. Note that with this additional constraint, the decomposition $\psi=\varphi-\mu$ is unique by \cref{rem:extremal_unique}.  We give two lower bounds for $\|\psi\|$.

\begin{cl}\label{cl:bound1}
$\|\psi\|\geq |\varphi(J)-\mu(J)|$.
\end{cl}
\begin{claimproof}
    The statement immediately follows from the equality $\psi(J)=\varphi(J)-\mu(J)$.
\end{claimproof}

\begin{cl}\label{cl:bound2}
$\|\psi\|\geq \frac{3}{4}\cdot \varphi(J)-\frac{1}{2}\cdot \mu(J)$.
\end{cl}
\begin{claimproof}
    We prove the statement using a probabilistic method. Let $J'$ be the support of $\mu$, i.e., $J'=\{a \in J \mid \mu(a) \neq 0\}$, and let $J''=J \setminus J'$. Furthermore, let $\cA'$ be the family of nonempty sets contained in $J'$, i.e., $\cA'=\{A \subseteq J' \mid A \neq \emptyset\}$, and let $\cA''=2^J\setminus \{\cA'\cup\{\emptyset\}\}$. Note that, by our assumption and by the definition of $J'$, $\alpha_{\{a\}}=0$ for all $\{a\} \in \cA'$.
    
    Define $\varphi'=\sum_{A \in \cA'} \alpha_A \varphi_{A}$. Select a random subset $X$ of $J'$ by including each element with probability $\frac{1}{2}$, independently of other elements. The set $X$ intersects every $A \subseteq J'$ of size at least two with probability at least $\frac{3}{4}$, hence
    \begin{align*}
    \bE\bigl(\varphi'(X)-\mu(X)\bigr) 
    {}&{}\geq \tfrac{3}{4}\cdot  \varphi'(J')-\tfrac{1}{2}\cdot \mu(J')\\
    {}&{}=\tfrac{3}{4}\cdot  \varphi'(J)-\tfrac{1}{2}\cdot \mu(J).    
    \end{align*}
    It follows that there exists a set $T \subseteq J'$ satisfying $\varphi'(T)-\mu(T) \geq \frac{3}{4}\cdot  \varphi'(J)-\frac{1}{2}\cdot \mu(J)$. Since all sets $A \in \cA''$ intersects $J''$, we have $\varphi(T\cup J'')=\varphi'(T)+\varphi(J)-\varphi'(J)$. This, together with $\mu(J'')=0$ and $\varphi'\leq\varphi$, implies 
    \begin{align*}
    \|\psi\|
    {}&{}= \max_{A \subseteq J} |\psi(A)|\\ 
    {}&{} \geq \psi(T \cup J'')\\
    {}&{} =\varphi(T \cup J'')-\mu(T \cup J'')\\
    {}&{} =\bigl(\varphi'(T)-\mu(T)\bigr)+\bigl(\varphi(J)-\varphi'(J)-\mu(J'')\bigr)\\
    {}&{} \geq \tfrac{3}{4}\cdot \varphi'(J)-\tfrac{1}{2}\cdot \mu(J)+\varphi(J)-\varphi'(J)\\
    {}&{} \geq \tfrac{3}{4}\cdot \varphi(J)-\tfrac{1}{2}\cdot \mu(J),
    \end{align*} 
concluding the proof of the claim.
\end{claimproof}

To finish the proof of the theorem, we distinguish two cases. If $\mu(J) \geq \frac{7}{6}\cdot \varphi(J)$, then Claim~\ref{cl:bound1} implies
\begin{equation*}
\varphi(J)\leq 6\cdot|\varphi(J)-\mu(J)|\leq 6\cdot \|\psi\|. 
\end{equation*}
If $\mu(J) \leq \frac{7}{6}\cdot  \varphi(J)$, then Claim~\ref{cl:bound2} implies
\begin{equation*}
\varphi(J)\leq 6\cdot\bigl(\tfrac{3}{4}\cdot\varphi(J)-\tfrac{1}{2}\cdot\mu(J)\bigr)\leq 6\cdot \|\psi\|.
\end{equation*}
Note that $\|\mu\|=\|\psi-\varphi\|\leq \|\psi\|+\|\varphi\|\leq 7\cdot \|\psi\|$. Since $\varphi$ is infinite-alternating and $\mu$ is nonnegative charge, the decomposition $\psi=\varphi-\mu$ is a $7$-bounded monotonic sum-decomposition as well as a $7$-bounded monotonic diff-decomposition. This finishes the proof of the theorem.
\end{proof}

By Corollary~\ref{cor:constant} and Theorem~\ref{thm:weakly}, we get the following result for the infinite case.

\begin{cor}\label{cor:strong2weak}
    Every bounded weakly infinite-alternating set function can be written as the difference of an infinite-alternating set function and a nonnegative charge.
\end{cor}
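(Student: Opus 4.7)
The proof is essentially a compactness transfer from the finite case established in Theorem~\ref{thm:weakly} to the infinite setting, using the machinery of Corollary~\ref{cor:constant}. The plan is to verify that all three hypotheses of that corollary are in place when $\psi$ is bounded and weakly infinite-alternating.

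First I would check that weak infinite-alternation passes to quotients. If $\cQ$ is any finite measurable partition of $J$, then any collection of pairwise disjoint sets in $\cB_\cQ$ is in particular pairwise disjoint in $\cB$, and the quotient $\psi/\cQ$ evaluates $\psi$ on unions of partition classes. Hence $\psi/\cQ$ inherits the weakly $k$-alternating inequality for every $k\geq 2$, i.e., $\psi/\cQ$ is weakly infinite-alternating on the finite ground set $[q]$. Moreover, since each value of $\psi/\cQ$ is a value of $\psi$, we have $\|\psi/\cQ\|\leq\|\psi\|<\infty$.

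Next I would apply Theorem~\ref{thm:weakly} to $\psi/\cQ$ to obtain a decomposition $\psi/\cQ=\varphi_\cQ-\mu_\cQ$ in which $\varphi_\cQ$ is infinite-alternating, $\mu_\cQ$ is a nonnegative charge on $2^{[q]}$, and $\max\{\|\varphi_\cQ\|,\|\mu_\cQ\|\}\leq 7\cdot\|\psi/\cQ\|\leq 7\|\psi\|$. This supplies, uniformly over all finite measurable partitions $\cQ$, a decomposition of $\psi/\cQ$ as an infinite-alternating function minus a nonnegative charge whose norms are controlled by the single constant $c=7$ times $\|\psi\|$.

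Finally I would invoke Corollary~\ref{cor:constant} with this constant $c=7$: since the properties ``infinite-alternating'' and ``nonnegative charge'' are finitary and closed (the defining inequalities of each form are non-strict linear conditions on finitely many values), the compactness argument in Theorem~\ref{thm:p1p2} hands us a global decomposition $\psi=\varphi-\mu$ with $\varphi$ infinite-alternating and $\mu$ a nonnegative charge on $(J,\cB)$. I do not foresee a genuine obstacle here: the substantive content sits in Theorem~\ref{thm:weakly} (the $7$-boundedness is what makes the compactness step go through) and in Corollary~\ref{cor:constant}; the present corollary is essentially a bookkeeping step that assembles them, the only real thing to check being that weak infinite-alternation is preserved under quotients and that $\|\psi/\cQ\|\leq\|\psi\|$, both of which are immediate.
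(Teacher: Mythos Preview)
Your proposal is correct and is exactly the argument the paper has in mind: the corollary is stated as an immediate consequence of Theorem~\ref{thm:weakly} and Corollary~\ref{cor:constant}, and you have simply written out the routine verifications (quotients remain weakly infinite-alternating, $\|\psi/\cQ\|\le\|\psi\|$, and the $7$-bounded decomposition produced in the proof of Theorem~\ref{thm:weakly} is of the specific form ``infinite-alternating minus nonnegative charge'' needed for Corollary~\ref{cor:constant}).
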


\begin{rem}
It is worth checking that the set functions defined in \eqref{eq:sum} and \eqref{eq:diff} are not weakly infinite-alternating. Indeed, if $\varphi$ is the function defined in \eqref{eq:sum} for $n \ge 3$, then \[V_\varphi(\emptyset; \{a_1, a_{n+1}\}, \{a_2, a_{n+2}\}, \{a_3, a_4, \dots, a_n\}) = 1,\] showing that $\varphi$ is not weakly 3-alternating. Similarly, if $\varphi$ is the function defined in \eqref{eq:diff} for $n \ge 3$, then for a partition $\{a_1,\dots, a_n\} = A_1 \cup A_2 \cup A_3$ we have $V_\varphi(\emptyset; A_1, A_2, A_3) = 1$, showing that $\varphi$ is not weakly 3-alternating.

\end{rem}

%%%%%%%%%%%%%%%%%%%%%%%%%%%%%%%%
\section{Weighted cut functions}
\label{sec:cut}
%%%%%%%%%%%%%%%%%%%%%%%%%%%%%%%%    

The probably most fundamental examples of non-monotone submodular set functions are the weighted cut functions of undirected graphs. Let $G=(V,E)$ be an undirected graph and $w\colon E \to \bR_+$ be a weight function. Note that $\|d_w\|=\max_{X\subseteq V}d_w(X)$ is the maximum weight of a cut in $G$, a parameter that is of particular combinatorial interest, and whose determination is the so-called \textsc{Max-Cut} problem. \textsc{Max-Cut} is APX-hard in general, roughly meaning that it is NP-hard to approximate the optimum value to within an arbitrary constant factor. On the positive side, the problem admits a simple greedy $2$-approximation: take an arbitrary partition of the vertices into two parts, and repeatedly move a vertex from one side of the partition to the other if this step increases the total weight of the cut. Upon termination, for each vertex the total weight of edges connecting it to the other partition class is at least as large as the total weight of edges connecting it to its own class. This implies that the weight of the cut is at least $w(E)/2$. The best known approximation ratio of $1.139$ is due to Goemans and Williamson~\cite{goemans1995improved} using semidefinite programming and randomized rounding, which was shown to be best possible assuming that the Unique Games Conjecture is true by Khot, Kindler, Mossel, and O'Donnell~\cite{khot2007optimal}.

Example~\ref{ex:cutfn} shows that $d_w$ is weakly infinite-alternating, hence Theorem~\ref{thm:weakly} implies the existence of $7$-bounded monotonic sum- and diff-decompositions of $d_w$. On the other hand, for a $c$-bounded monotonic sum-decomposition $d_w=\varphi_1+\varphi_2$, the value of $\varphi_1(V)=|\varphi_2(V)|$ provides a $c$-approximation of the maximum cut. Thus, in the light of the results of Khot et al., we cannot expect to construct a better than $1.139$-bounded monotonic sum-decomposition (in the sense of a polynomial-time algorithm returning the value of $\varphi_1(S)$ for any $S\subseteq V$). Similar assertion can be made about diff-decomposition.

It is easy to see that for cut functions, the constant $c$ can be reduced from $7$ to $4$ diff-decompositions, and even to $2$ for sum-decompositions. Recall that $d_w=e_w-i_w$, where $e_w$ is an increasing submodular while $i_w$ is an increasing supermodular set function. Furthermore, an easy computation shows that $e_w(X)+i_w(X)=\sum_{v\in X}d_w(v)$ for all $X\subseteq V$, implying that $e_w+i_w$ is a nonnegative charge. Therefore, $d_w=2 e_w-(e_w+i_w)$ provides a monotonic  diff-decomposition of the weighted cut function. By the above greedy algorithm, $\|d_w\|\geq w(E)/2$ while $\|e_w\|\leq w(E)$ and $\|i_w\|\leq w(E)$ hold by definition, thus the decomposition is $4$-bounded. Using a similar reasoning, $d_w=e_w+(-i_w)$ where both $e_w$ and $-i_w$ are submodular, $e_w$ is increasing and $-i_w$ is decreasing, leading to a $2$-bounded monotonic sum-decomposition.

When bounding the value of $c$, the ratio of the maximum weight of a cut to the total weight of the edges plays an important role: if the total weight of the edges is at most $c$ times the maximum weight of a cut, then the constructions above give rise to $c$-bounded monotonic sum- and $2c$-bounded monotonic diff-decompositions of $d_w$. In the unweighted setting, this ratio is well-studied in the extremal graph theory literature and is often referred to as the \emph{bipartite density} $b(G)$ of $G$. That is, $b(G)=\max\{d(X)/|E|\mid X\subseteq V\}$. It was observed by Erdős~\cite{erdos1967even} that $b(G)\ge 1/2$ for every simple graph $G$. He also showed that the lower bound $1/2$ cannot be replaced by any larger real number even if we consider very restricted families such as graphs of large girth.

\begin{prop}[Erdős]\label{prop:erdos}
For any $\varepsilon>0$, there exists a triangle-free simple graph $G$ with $b(G)\leq 1/2+\varepsilon$.    
\end{prop}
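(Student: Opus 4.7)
The plan is to apply the probabilistic method to a random graph of carefully chosen density. Fix $\varepsilon > 0$ and let $n$ be large; consider the Erdős--Rényi random graph $G = G(n,p)$ with $p = n^{-3/4}$. This choice ensures $pn \to \infty$ (so that cut sizes concentrate uniformly over all subsets) while $p^2 n \to 0$ (so that triangles become negligible compared to edges). Writing $m \coloneqq p\binom{n}{2}$ for the expected number of edges, one has $\bE|E(G)| = m \sim \tfrac{1}{2}n^{5/4}$ and $\bE(\#\text{triangles}) = p^3\binom{n}{3} \sim \tfrac{1}{6}n^{3/4}$, so the expected triangle-to-edge ratio is $O(p^2 n) = o(1)$.

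The main step is to show that, with positive probability for large $n$, three events occur simultaneously: (i) $|E(G)| \geq m/2$; (ii) the number of triangles is at most $(\varepsilon/4)\,m$; and (iii) $d_G(X) \leq (1/2 + \varepsilon/4)\,m$ for every $X \subseteq V$. Events (i) and (ii) follow respectively from Chebyshev's inequality applied to a second-moment computation on the number of edges, and from Markov's inequality combined with the triangle-to-edge ratio above. For (iii), fix $X$ with $|X| = k$; then $d_G(X)$ is a sum of independent Bernoulli variables with mean $pk(n-k) \leq pn^2/4$. An additive Chernoff bound yields
\[
\Pr\Bigl(d_G(X) > \tfrac{pn^2}{4} + \tfrac{\varepsilon pn^2}{4}\Bigr) \;\leq\; \exp(-c\,\varepsilon^2\, pn^2)
\]
for a universal constant $c > 0$. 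Since $pn^2 = n^{5/4}$ grows strictly faster than $n$, this probability is much smaller than $2^{-n}$, so a union bound over all $2^n$ subsets $X \subseteq V$ gives (iii) with high probability.

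Given a realization $G$ satisfying (i)--(iii), delete one edge from each triangle (greedily) to obtain $G'$. Then $G'$ is triangle-free, $|E(G')| \geq m/2 - (\varepsilon/4)m \geq (1 - \varepsilon/2)\,m/2$, and every cut obeys $d_{G'}(X) \leq d_G(X) \leq (1/2 + \varepsilon/4)\,m$. Consequently,
\[
b(G') \;=\; \max_{X \subseteq V}\frac{d_{G'}(X)}{|E(G')|} \;\leq\; \frac{(1/2 + \varepsilon/4)\,m}{(1 - \varepsilon/2)\,m/2} \;\leq\; \tfrac{1}{2} + \varepsilon
\]
for $\varepsilon$ small enough (one rescales $\varepsilon$ at the outset to absorb the constants). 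The principal technical obstacle is the uniform concentration step (iii): a multiplicative Chernoff bound becomes too weak for very small or very large cuts, so one must use the additive form calibrated to the global deviation $\varepsilon m$ rather than to the local mean $pk(n-k)$, and verify that $pn$ diverges fast enough for the resulting tail bound to beat the $2^n$ union bound.
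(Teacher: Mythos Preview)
Your probabilistic approach is sound in spirit, but the final computation does not yield what you claim. With event (i) as stated, namely $|E(G)|\ge m/2$, your lower bound on $|E(G')|$ is only $(1-\varepsilon/2)\,m/2$, and hence
\[
\frac{(1/2+\varepsilon/4)\,m}{(1-\varepsilon/2)\,m/2}\;=\;\frac{1/2+\varepsilon/4}{1/2-\varepsilon/4}\;\approx\;1+\varepsilon,
\]
which is nowhere near $1/2+\varepsilon$. You have lost a factor of two: you need $|E(G')|$ close to $m$, not to $m/2$. The fix is immediate. Replace (i) by the stronger event $|E(G)|\ge(1-\varepsilon/8)m$; this still holds with high probability, since $|E(G)|$ is binomial with mean $m\sim n^{5/4}/2$ and standard deviation $O(\sqrt m)$, so a deviation of order $\varepsilon m$ has probability $o(1)$ by the very Chebyshev argument you invoke. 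Then $|E(G')|\ge(1-\varepsilon/8-\varepsilon/4)m$ and the ratio becomes $(1/2+\varepsilon/4)/(1-3\varepsilon/8)\le 1/2+\varepsilon$ for small $\varepsilon$, as desired. Everything else in your argument (the additive Chernoff bound calibrated to $\varepsilon m$ rather than to the local mean, the check that $pn^2=n^{5/4}$ beats the $2^n$ union bound, and the triangle deletion) is correct.

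For comparison: the paper does not supply its own proof of this proposition; it is quoted from Erd\H{o}s and stated as a known result. Your random-graph-plus-deletion argument, once corrected, is a standard way to obtain the statement.
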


For other special classes of graphs, better bounds are known. Staton~\cite{staton1980edge} and Locke~\cite{locke1982maximum} showed that if $G$ is cubic and different from $K_4$, then $b(G)\geq 7/9$. For triangle-free cubic graphs, Bondy and Locke~\cite{zhu2009bipartite} improved the bound to $b(G)\geq 4/5$. Zhu~\cite{zhu2009bipartite} proved that, apart from a few exceptions, $b(G)\geq 17/21$ for 2-connected triangle-free subcubic graphs. 

%%%%%%%%%%%%%%
\subsection{Triangle-free graphs}
\label{sec:triangle_free}
%%%%%%%%%%%%%%

We have already seen that $d_w=e_w-i_w$ gives a $2$-bounded monotonic sum-decomposition of the weighted cut function. The question naturally arises: Can the constant $2$ be improved for general graphs? In this section, we answer this question in the negative, implying optimality of the decomposition in terms of the constant $c$. We start with a general observation on monotonic sum-decompositions of weighted cut functions. 

\begin{lem} \label{lem:clique} 
Let $G=(V,E)$ be a simple graph and let $H=(V,\cK)$ denote the hypergraph where $\cK$ is the family of vertex sets of complete subgraphs of $G$.
Let $w\colon E \to \bR_+$ be a weight function, and $d_w = \varphi_1 + \varphi_2$ be a monotonic sum-decomposition of the weighted cut function $d_w$. Then, there exist a weight function $w' \colon \cK \to \bR$ such that $\varphi_1 = i_{H, w'}$.
\end{lem}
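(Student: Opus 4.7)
The plan is to argue that the Möbius transform of $\varphi_1$ is supported on cliques of $G$. Recall that every set function $f\colon 2^V\to\bR$ with $f(\emptyset)=0$ has a unique expansion $f(X)=\sum_{\emptyset\neq S\subseteq V}\widehat{f}(S)\,\mathbf{1}[S\subseteq X]$, where $\widehat{f}(S)\coloneqq\sum_{T\subseteq S}(-1)^{|S|-|T|}f(T)$. I would set $w'(K)\coloneqq\widehat{\varphi_1}(K)$ for each clique $K\in\cK$, so that
\[
i_{H,w'}(X)=\sum_{K\in\cK,\,K\subseteq X}w'(K)=\sum_{\emptyset\neq S\subseteq X}\widehat{\varphi_1}(S)=\varphi_1(X),
\]
provided the vanishing $\widehat{\varphi_1}(S)=0$ holds for every non-clique $S\subseteq V$. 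Thus the whole lemma reduces to this vanishing property.

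The first step will be to extract a modularity of $\varphi_1$ from the structure of $d_w$. A direct case analysis of how an individual edge $e\in E$ contributes to $d_w(A), d_w(A+u), d_w(A+v)$ and $d_w(A+u+v)$---depending on whether its endpoints lie in $A$, equal $u$, equal $v$, or lie outside $A\cup\{u,v\}$---should give
\[
V_{d_w}(A;\{u\},\{v\})=d_w(A)-d_w(A+u)-d_w(A+v)+d_w(A+u+v)=-2w\bigl(E[\{u\},\{v\}]\bigr)
\]
for any pairwise disjoint $A,\{u\},\{v\}$; only the edge $uv$ (if present) survives the alternating cancellation. Specializing to $uv\notin E$ then shows that $d_w$ is modular on the pair $(A+u,A+v)$, and Lemma~\ref{lem:modular1} applied to $d_w=\varphi_1+\varphi_2$ transfers this modularity to both $\varphi_1$ and $\varphi_2$. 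Hence $\varphi_1(A)+\varphi_1(A+u+v)=\varphi_1(A+u)+\varphi_1(A+v)$ whenever $A\cap\{u,v\}=\emptyset$ and $uv\notin E$.

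The second step will exploit this modularity to make $\widehat{\varphi_1}$ vanish on non-cliques. Given a non-clique $S$, I would pick a nonadjacent pair $u,v\in S$ and group the subsets $T\subseteq S$ by $B\coloneqq T\cap\{u,v\}$, writing $T=T'\cup B$ with $T'\subseteq S\setminus\{u,v\}$, to obtain
\[
\sum_{T\subseteq S}(-1)^{|T|}\varphi_1(T)=\sum_{T'\subseteq S\setminus\{u,v\}}(-1)^{|T'|}\Bigl(\varphi_1(T')-\varphi_1(T'+u)-\varphi_1(T'+v)+\varphi_1(T'+u+v)\Bigr).
\]
Every inner bracket is zero by the modularity of $\varphi_1$ on $(T'+u,T'+v)$, so the whole sum---hence $\widehat{\varphi_1}(S)$, up to the sign $(-1)^{|S|}$---vanishes as required.

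The main obstacle is really only the edge-counting identity for $V_{d_w}(A;\{u\},\{v\})$; everything else is formal Möbius bookkeeping. Note that the plan uses neither the monotonicity of $\varphi_1$ nor the nonnegativity of $w$ (only the submodularity of both summands, through Lemma~\ref{lem:modular1}), and the resulting weights $w'$ will in general be signed.
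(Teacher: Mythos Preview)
Your proof is correct and essentially matches the paper's approach: both hinge on the same key fact---that $d_w$ is modular on every pair $(A+u,A+v)$ with $uv\notin E$, hence so is $\varphi_1$ by Lemma~\ref{lem:modular1}---and the paper's recursive definition $w'(K)=\varphi_1(K)-\sum_{K'\subsetneq K}w'(K')$ is precisely the M\"obius transform $\widehat{\varphi_1}$ restricted to cliques. The only difference is packaging: the paper verifies $\varphi_1(X)=i_{H,w'}(X)$ by induction on $|X|$ (using the modularity at a non-edge of $X$ for the inductive step), whereas you show directly that $\widehat{\varphi_1}(S)=0$ for non-cliques $S$; the paper even notes after its proof that $w'(K)=(-1)^{|K|}V_{\varphi_1}(\emptyset;\{v_1\},\dots,\{v_k\})$, which is your formula.
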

\begin{proof}
We define $w'$ recursively for larger and larger complete subgraphs. If $w'$ is already defined on the proper subsets of $K \in \cK$, then we let 
\begin{equation} \label{eq:cliqueweight}
w'(K) \coloneqq \varphi_1(K) - \sum_{\substack{K' \in \cK \\ K' \subsetneq K}}w'(K').
\end{equation}
We show by induction on $|X|$ that $\varphi_1(X) = i_{H, w'}(X)$ holds for each $X \subseteq V$. If $X \in \cK$ then the statement holds by the choice of $w'$. Otherwise, $G[X]$ is not a complete subgraph of $G$, that is, there exist distinct vertices $u, v \in X$ such that $uv \not \in E$. Using \cref{lem:modular1} and equation \eqref{eq:i}, it follows that both $\varphi_1$ and $i_{H, w'}$ are modular on the pair $X-u, X-v$. Since $\varphi_1$ and $i_{H, w'}$ are equal on each of the sets $X\setminus\{u,v\}$, $X-u$, and $X-v$, it follows that $\varphi_1(X) = i_{H, w'}(X)$ holds as well, concluding the proof of the lemma.
\end{proof}
We note that for $K=\{v_1,\dots, v_k\} \in \cK$, equation \eqref{eq:cliqueweight} yields $w'(K) = (-1)^{k} \cdot V_{\varphi_1}(\emptyset; \{v_1\}, \dots, \{v_k\})$.  As $\varphi_1$ is increasing and submodular, this implies that $w'(K) \ge 0$ for $k=1$ and $w'(K) \le 0$ for $k=2$.

\begin{lem}\label{lem:9}
Let $G=(V,E)$ be a simple graph and let $H=(V,\cK)$ denote the hypergraph where $\cK$ is the family of vertex sets of complete subgraphs of $G$.
Let $w\colon E \to \bR_+$ be a weight function, and $d_w = \varphi_1 + \varphi_2$ be a monotonic sum-decomposition of the weighted cut function $d_w$. Furthermore, let $w'\colon \cK \to \bR$ be such that $\varphi_1 = i_{H, w'}$. Then \[d_w(v) \le w'(v) + \sum_{\substack{K \in \cK \\ v \in K}} w'(K)\]
for each $v\in V$.
\end{lem}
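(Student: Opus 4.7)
The plan is to first rewrite the right-hand side as a closed expression in $\varphi_1$ using $\varphi_1 = i_{H, w'}$, and then observe that the resulting inequality follows immediately from the monotonicity of $\varphi_1$ and $\varphi_2$. Since $\{v\}$ is the unique clique contained in $\{v\}$, the definition of $i_{H, w'}$ gives $w'(v) = i_{H, w'}(\{v\}) = \varphi_1(v)$. Moreover, a clique $K \in \cK$ is contained in $V$ but not in $V - v$ precisely when $v \in K$, so
\begin{equation*}
\sum_{\substack{K \in \cK \\ v \in K}} w'(K) = i_{H, w'}(V) - i_{H, w'}(V - v) = \varphi_1(V) - \varphi_1(V - v).
\end{equation*}
Hence the claim reduces to showing $d_w(v) \le \varphi_1(v) + \varphi_1(V) - \varphi_1(V - v)$.

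Next, I would substitute $d_w(v) = \varphi_1(v) + \varphi_2(v)$ from the monotonic sum-decomposition, which further reduces the target to $\varphi_2(v) \le \varphi_1(V) - \varphi_1(V - v)$. The left-hand side is nonpositive because $\varphi_2$ is decreasing with $\varphi_2(\emptyset) = 0$, while the right-hand side is nonnegative because $\varphi_1$ is increasing; thus the inequality holds trivially. I do not anticipate any real obstacle: the only step that requires a moment's thought is recognizing $\sum_{K \ni v} w'(K)$ as the marginal $\varphi_1(V) - \varphi_1(V - v)$ of the clique-induced function $i_{H, w'}$, after which the monotonicity of the two components of the decomposition finishes the argument.
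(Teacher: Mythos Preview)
Your proof is correct and essentially identical to the paper's: both identify $w'(v)=\varphi_1(v)$ and $\sum_{K\ni v} w'(K)=\varphi_1(V)-\varphi_1(V-v)$, then use that $\varphi_2(v)\le 0$ (from $\varphi_2$ decreasing with $\varphi_2(\emptyset)=0$) and $\varphi_1(V)-\varphi_1(V-v)\ge 0$ (from $\varphi_1$ increasing). The paper phrases this as adding the two inequalities $0\le \varphi_1(V)-\varphi_1(V-v)$ and $0\le w'(v)-d_w(v)$, while you rewrite the target as $\varphi_2(v)\le \varphi_1(V)-\varphi_1(V-v)$, but this is the same argument.
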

\begin{proof}
As $\varphi_1$ is increasing, we have
\[0 \le \varphi_1(V)-\varphi_1(V-v) = \sum_{\substack{K \in \cK \\ v \in K}} w'(K)\]
for any vertex $v \in V$. As $\varphi_2$ is decreasing, we have
\[0 \le -\varphi_2(v) = \varphi_1(v) - d_w(v)  = w'(v) - d_w(v).\]
By adding the two inequalities, the lemma follows.
\end{proof}

Our main result is as follows. 

\begin{thm} \label{thm:triangle-free}
Let $G=(V,E)$ be a simple triangle-free graph and $w\colon E \to \bR_+$ be a weight function. Then $\|d_w\|_+ = w(E)$.
\end{thm}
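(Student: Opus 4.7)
My plan is to prove the two inequalities $\|d_w\|_+\le w(E)$ and $\|d_w\|_+\ge w(E)$ separately. The upper bound does not require triangle-freeness and is already implicit in the introductory discussion: the decomposition $d_w = e_w + (-i_w)$ is a monotonic sum-decomposition (both $e_w$ and $-i_w$ are submodular, $e_w$ is increasing, $-i_w$ is decreasing, and both vanish on $\emptyset$), and $e_w(V) = w(E)$ since every edge of $G$ lies inside $V$.

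The interesting direction is the lower bound, which is where triangle-freeness enters. Let $d_w = \varphi_1 + \varphi_2$ be an arbitrary monotonic sum-decomposition. By Lemma~\ref{lem:clique}, there exists $w'\colon \cK\to\bR$ with $\varphi_1 = i_{H,w'}$, where $\cK$ is the family of vertex sets of complete subgraphs of $G$. Since $G$ is triangle-free, every element of $\cK$ is either the empty set, a single vertex, or an edge of $G$. Consequently
\begin{equation*}
\varphi_1(V) = i_{H,w'}(V) = \sum_{v\in V} w'(v) + \sum_{uv\in E} w'(\{u,v\}).
\end{equation*}

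Now I invoke Lemma~\ref{lem:9} at each vertex $v\in V$. Using triangle-freeness again, the cliques $K\in\cK$ containing $v$ are exactly $\{v\}$ and the sets $\{u,v\}$ for $uv\in E$, so the lemma specializes to
\begin{equation*}
d_w(v) \le 2\, w'(v) + \sum_{u\colon uv\in E} w'(\{u,v\}).
\end{equation*}
Summing this over all $v\in V$, the left-hand side becomes $\sum_{v\in V} d_w(v) = 2\, w(E)$ (each edge contributes to both endpoints), while the right-hand side becomes $2\sum_{v\in V} w'(v) + 2\sum_{uv\in E} w'(\{u,v\}) = 2\,\varphi_1(V)$. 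Dividing by $2$ yields $\varphi_1(V)\ge w(E)$, and minimizing over all sum-decompositions gives $\|d_w\|_+\ge w(E)$.

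I do not anticipate significant obstacles: once Lemmas~\ref{lem:clique} and~\ref{lem:9} are in hand, triangle-freeness makes the combinatorics transparent by collapsing $\cK$ to vertices and edges, so the vertex-wise bound of Lemma~\ref{lem:9} integrates cleanly (via the handshake identity $\sum_v d_w(v) = 2w(E)$) to give exactly the matching constant. The only thing worth double-checking is the coefficient: the factor $2$ in front of $w'(v)$ in the per-vertex bound comes from counting $w'(v)$ both as the singleton term and as part of the sum $\sum_{K\ni v} w'(K)$ in Lemma~\ref{lem:9}, and this precisely cancels the factor $2$ from $\sum_v d_w(v) = 2w(E)$.
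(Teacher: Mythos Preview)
Your proof is correct and follows essentially the same approach as the paper: both use Lemma~\ref{lem:clique} and Lemma~\ref{lem:9}, exploit triangle-freeness to reduce $\cK$ to singletons and edges, and sum the per-vertex inequality to obtain $2w(E)\le 2\,\varphi_1(V)$. Your write-up is in fact slightly more explicit about tracking the factor of $2$ on $w'(v)$, but the argument is the same.
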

\begin{proof}
Recall that $\|d_w\|_+\le w(E)$ holds as $d_w = e_w + (-i_w)$ is a monotonic sum-decomposition. To prove $\|d_w\|_+ \ge w(E)$, let $d_w = \varphi_1+\varphi_2$ be a monotonic sum-decomposition. Let $H=(V,\cK)$ denote the hypergraph where $\cK$ consists of the vertex sets of complete subgraphs of $G$. Note that as $G$ is triangle-free, $\cK = \{\{v\}\mid v \in V\}\cup \{\{u,v\}\mid uv \in E\}$.
By \cref{lem:clique}, there exists a weight function $w'\colon \cK \to \bR$ such that $\varphi_1 = i_{H, w'}$. By Lemma~\ref{lem:9}, we have
\begin{equation} \label{eq:vineq}
d_w(v) \le w'(v) + \sum_{\substack{K \in \cK \\ v \in K}} w'(K)
\end{equation}
for each $v \in V$. Summing this inequality for $v \in V$ and using that $G$ is triangle-free, we get 
\[2 w(E) = \sum_{v \in V} d_w(v) \le 2 i_{H, w'}(V),\]
concluding the proof of the theorem.
\end{proof}

By Proposition~\ref{prop:erdos}, for any $c < 2$ there exists a simple triangle-free graph $G=(V,E)$ such that $\max_{X \subseteq V} d(X) > c \cdot |E|$. Therefore, \cref{thm:triangle-free} implies the following.

\begin{cor} For any $c<2$, there exists a graph whose cut function does not admit a $c$-bounded monotonic sum-decomposition.
\end{cor}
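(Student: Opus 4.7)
The plan is to combine Theorem~\ref{thm:triangle-free} with Erdős's construction from Proposition~\ref{prop:erdos} in a short contradiction argument. Given $c<2$, I would first fix $\varepsilon>0$ small enough that $c(1/2+\varepsilon)<1$, which is possible because $c/2<1$. Then I would invoke Proposition~\ref{prop:erdos} to obtain a simple triangle-free graph $G=(V,E)$ with $b(G)\le 1/2+\varepsilon$, and consider the unit weight function $w\equiv 1$ on $E$.

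Next, I would unwind the definitions: by the definition of $b(G)$,
\[
\|d_w\|=\max_{X\subseteq V} d_w(X)=b(G)\cdot|E|\le (1/2+\varepsilon)|E|,
\]
while Theorem~\ref{thm:triangle-free} gives $\|d_w\|_+=w(E)=|E|$. Now suppose for contradiction that $d_w$ admits a $c$-bounded monotonic sum-decomposition $d_w=\varphi_1+\varphi_2$. Since $\varphi_1$ is increasing with $\varphi_1(\emptyset)=0$, the function $\varphi_1$ is nonnegative and attains its supremum at $V$, so $\|\varphi_1\|=\varphi_1(V)$. The $c$-boundedness yields
\[
\varphi_1(V)=\|\varphi_1\|\le c\cdot\|d_w\|\le c(1/2+\varepsilon)|E|<|E|,
\]
whereas the definition of $\|d_w\|_+$ forces $\varphi_1(V)\ge\|d_w\|_+=|E|$, a contradiction. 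Hence no such decomposition exists for this $G$.

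There is no significant obstacle here once Theorem~\ref{thm:triangle-free} and Proposition~\ref{prop:erdos} are available: the proof is essentially a single inequality comparing the upper bound on $\|d_w\|$ from small bipartite density against the lower bound $|E|$ forced on $\varphi_1(V)$ by the triangle-free structure. The only minor point to note is the identification $\|\varphi_1\|=\varphi_1(V)$ for a nonnegative increasing set function vanishing on $\emptyset$, which converts the $c$-boundedness condition into a direct bound on $\varphi_1(V)$.
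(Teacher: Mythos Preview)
Your proposal is correct and follows essentially the same approach as the paper: combine Proposition~\ref{prop:erdos} (to produce a triangle-free graph with bipartite density close to $1/2$) with Theorem~\ref{thm:triangle-free} (which pins $\|d_w\|_+$ at $|E|$) to force $\varphi_1(V)<|E|\le\varphi_1(V)$ for any putative $c$-bounded decomposition. The paper compresses this into a single sentence preceding the corollary, while you have spelled out the inequality chain and the identification $\|\varphi_1\|=\varphi_1(V)$ explicitly.
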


%%%%%%%%%%%%%%
\subsection{An upper bound on \texorpdfstring{$\|d_w\|_+$}{|dw|+}}
\label{sec:triangle}
%%%%%%%%%%%%%%

As a counterpart of \cref{thm:triangle-free}, we show how to improve the sum-decomposition $d_w = e_w - i_w$ if triangles are present. First we discuss complete graphs.

\begin{lem} \label{lem:complete}
The cut function of a complete graph admits a $1$-bounded monotonic sum-de\-com\-po\-si\-tion. 
\end{lem}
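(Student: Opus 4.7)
I propose to define $\varphi_1(X) := \max\{d_w(Y) \mid Y \subseteq X\}$, the monotone envelope of $d_w$, and $\varphi_2 := d_w - \varphi_1$. The following properties are immediate from this definition: $\varphi_1(\emptyset) = 0$; $\varphi_1$ is increasing; $\|\varphi_1\| = \max_Y d_w(Y) = \|d_w\|$; and since $\varphi_1 \ge d_w \ge 0$, we have $\varphi_2 \le 0$ so that $\|\varphi_2\| \le \|\varphi_1\| \le \|d_w\|$. Hence the decomposition is automatically $1$-bounded once I establish that $\varphi_2$ is decreasing and that both $\varphi_1$ and $\varphi_2$ are submodular.

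The monotonicity of $\varphi_2$ does not rely on completeness. Given $X \subseteq V$ and $v \notin X$, pick $Y^* \subseteq X$ with $d_w(Y^*) = \varphi_1(X)$ and apply submodularity of $d_w$ to the pair $X$ and $Y^*+v$, whose intersection is $Y^*$ and whose union is $X+v$:
\begin{equation*}
d_w(X) + d_w(Y^*+v) \ge d_w(Y^*) + d_w(X+v).
\end{equation*}
This rearranges to $d_w(X+v) - d_w(X) \le d_w(Y^*+v) - d_w(Y^*) \le \varphi_1(X+v) - \varphi_1(X)$, where the last inequality uses $Y^*+v \subseteq X+v$. Thus $\varphi_2(X+v) \le \varphi_2(X)$.

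The main obstacle is submodularity of $\varphi_1$, and this is where completeness of $G$ is essential. The monotone envelope of a cut function is not submodular in general: already for $K_{2,2}$ with unit weights, setting $X_1 = \{u,v\}$ and $X_2 = \{u,v'\}$ with $v,v'$ on the same side of the bipartition gives $\varphi_1(X_1)+\varphi_1(X_2) < \varphi_1(X_1\cap X_2)+\varphi_1(X_1\cup X_2)$. For complete graphs, however, the submodular gap of $d_w$ at $(X_1, X_2)$ equals $2w(E[X_1, X_2])$ by the graph specialization of \eqref{eq:cut}, and is large whenever $X_1\setminus X_2$ and $X_2 \setminus X_1$ are both nonempty, because every pair of vertices is connected by an edge. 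My plan is to pick a maximum maximizer $Y^*$ of $d_w$ over $2^{X_1 \cup X_2}$ and to argue, using a swap-based local-optimality argument specific to complete graphs, that $Y^* \cap X_1$, $Y^* \cap X_2$, and $Y^* \cap X_1 \cap X_2$ are maximizers of $d_w$ on $2^{X_1}$, $2^{X_2}$, $2^{X_1 \cap X_2}$, respectively. Once this compatibility is established, submodularity of $d_w$ applied at $Y^* \cap X_1$ and $Y^* \cap X_2$ transfers directly to $\varphi_1$:
\begin{equation*}
\varphi_1(X_1) + \varphi_1(X_2) = d_w(Y^*\cap X_1) + d_w(Y^*\cap X_2) \ge d_w(Y^* \cap X_1 \cap X_2) + d_w(Y^*) = \varphi_1(X_1 \cap X_2) + \varphi_1(X_1 \cup X_2).
\end{equation*}
Submodularity of $\varphi_2$ then follows because the submodular gap of $\varphi_1$ at $(X_1, X_2)$ equals the submodular gap of $d_w$ at $(Y^*\cap X_1, Y^* \cap X_2)$, which is $2w(E[Y^*\cap X_1, Y^*\cap X_2]) \le 2w(E[X_1, X_2])$.

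The technically demanding step will be the compatibility argument: showing that the restriction of a maximum maximizer to a smaller domain remains a maximizer. I expect this to hinge on the fact that in a complete graph, for any $X\subseteq V$, a subset $Y\subseteq X$ is a maximizer of $d_w$ on $2^X$ precisely when every vertex satisfies a simple swap condition relative to both $X$ and $V\setminus X$, and these conditions compose coherently under intersection exactly because every pair of vertices is an edge.
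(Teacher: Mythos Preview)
Your construction coincides with the paper's: for the unweighted complete graph $K_n$ one has $d(X)=|X|(n-|X|)$, and your monotone envelope $\varphi_1(X)=\max_{Y\subseteq X}|Y|(n-|Y|)$ is exactly the function $h_1(|X|)$ obtained by truncating the concave function $h(k)=k(n-k)$ at its maximum. The paper's two-line proof simply observes that $d(X)=h(|X|)$ with $h$ concave and invokes the construction already described in Section~\ref{sec:prelim} for set functions of the form $h(\lambda(X))$. Submodularity of $\varphi_1$ and $\varphi_2$ is then immediate from concavity of $h_1$ and $h_2$.

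Your proposed route to submodularity of $\varphi_1$, however, has a genuine gap. The ``compatibility'' step---choosing a maximizer $Y^*\subseteq X_1\cup X_2$ so that $Y^*\cap X_1$, $Y^*\cap X_2$, and $Y^*\cap X_1\cap X_2$ are simultaneously maximizers over $X_1$, $X_2$, and $X_1\cap X_2$---cannot be carried out even in the unweighted case. Take $n=10$, $X_1=\{1,\dots,6\}$, $X_2=\{4,\dots,9\}$. Since $d(Y)=|Y|(10-|Y|)$, the unique maximizing sizes are $|Y^*|=5$ over $X_1\cup X_2$, $|Y^*\cap X_i|=5$ over each $X_i$, and $|Y^*\cap X_1\cap X_2|=3$ over $X_1\cap X_2$. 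But inclusion--exclusion forces $|Y^*|=|Y^*\cap X_1|+|Y^*\cap X_2|-|Y^*\cap X_1\cap X_2|=7\neq 5$, a contradiction. No ``swap-based local-optimality'' argument can rescue this, because the obstruction is purely arithmetical. Your argument for the submodularity of $\varphi_2$ also rests on this compatibility, so it inherits the same gap. The lemma as stated concerns only the unweighted case (that is how it is used in Theorem~\ref{thm:upper}); there the paper's concavity observation gives the result without any maximizer-tracking.
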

\begin{proof}
Observe that the cut function of a complete graph on $n$ vertices can be written as $d(X)=h(|X|)$, where $h(k)=k\cdot(n-k)$ is concave on $\bZ_+$. A similar idea as in Section~\ref{sec:prelim} shows that such functions admit 1-bounded monotonic sum-decompositions.
\end{proof}

Using \cref{lem:complete}, we obtain the following bound for general graphs.

\begin{thm} \label{thm:upper}
Let $G=(V,E)$ be a simple graph, $w\colon E \to \bR_+$ be a weight function, and let $\cK$ denote the vertex sets of complete subgraphs of $G$. 
Then, 
\[\|d_w\|_+ \le \min\Bigl\{\sum_{K \in \cK} \bigl\lceil \tfrac{|K|}{2} \bigr\rceil \cdot \bigl\lfloor \tfrac{|K|}{2}\bigr\rfloor \cdot z(K)\, \bigl\vert\, z\colon \cK \to \bR_+, \sum_{\substack{K \in \cK\\ e \in E[K]}} z(K) = w(e) \text{ for each $e \in E$} \Bigr\}.\]
\end{thm}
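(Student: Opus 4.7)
The plan is to realize $d_w$ as a nonnegative combination of cut functions of complete subgraphs, apply \cref{lem:complete} to each, and recombine the resulting monotonic sum-decompositions. Fix any feasible $z \colon \cK \to \bR_+$ with $\sum_{K \in \cK,\, e \in E[K]} z(K) = w(e)$ for every $e \in E$, and for each $K \in \cK$ define $d^K \colon 2^V \to \bR_+$ by $d^K(X) \coloneqq |X \cap K| \cdot |K \setminus X|$. Note that $d^K$ depends only on $X \cap K$ and its restriction to $2^K$ is the cut function of the complete graph on $K$. A direct computation, exchanging the order of summation, gives
\[d_w(X) = \sum_{e \in E[X, V \setminus X]} \sum_{\substack{K \in \cK \\ e \in E[K]}} z(K) = \sum_{K \in \cK} z(K) \cdot d^K(X).\]

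Next, I would apply \cref{lem:complete} — more specifically, the explicit construction from \cref{sec:prelim} applied to the concave function $h(k) = k \cdot (|K| - k)$ — to obtain a $1$-bounded monotonic sum-decomposition $d^K|_{2^K} = \varphi_1^K + \varphi_2^K$ on $2^K$ in which the increasing component satisfies $\varphi_1^K(K) = \max_{Y \subseteq K} |Y| \cdot |K \setminus Y| = \lceil |K|/2 \rceil \cdot \lfloor |K|/2 \rfloor$. I then extend each component to all of $2^V$ by setting $\tilde\varphi_i^K(X) \coloneqq \varphi_i^K(X \cap K)$. Since $X \mapsto X \cap K$ sends unions to unions and intersections to intersections, submodularity is inherited by $\tilde\varphi_i^K$; monotonicity is also preserved, so $d^K = \tilde\varphi_1^K + \tilde\varphi_2^K$ is a monotonic sum-decomposition on $2^V$.

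Finally, set $\varphi_1 \coloneqq \sum_{K \in \cK} z(K) \, \tilde\varphi_1^K$ and $\varphi_2 \coloneqq \sum_{K \in \cK} z(K) \, \tilde\varphi_2^K$. Nonnegative combinations of increasing submodular set functions are increasing submodular, and similarly for the decreasing side, so $d_w = \varphi_1 + \varphi_2$ is a monotonic sum-decomposition with
\[\varphi_1(V) = \sum_{K \in \cK} z(K) \cdot \tilde\varphi_1^K(V) = \sum_{K \in \cK} z(K) \cdot \varphi_1^K(K) = \sum_{K \in \cK} \bigl\lceil \tfrac{|K|}{2} \bigr\rceil \cdot \bigl\lfloor \tfrac{|K|}{2} \bigr\rfloor \cdot z(K).\]
By the definition of $\|d_w\|_+$ and by minimizing over feasible $z$, the claimed bound follows.

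The only nontrivial point is keeping track of the precise value $\varphi_1^K(K) = \lceil |K|/2 \rceil \cdot \lfloor |K|/2 \rfloor$ in the $1$-bounded sum-decomposition of a complete graph's cut function; this is built into the explicit construction of \cref{lem:complete}, where the concave function $h(k) = k \cdot (|K| - k)$ is split at its maximum $x_0 = \lfloor |K|/2 \rfloor$. Everything else is a routine verification that submodularity and monotonicity survive both the $X \mapsto X \cap K$ extension and the nonnegative combination.
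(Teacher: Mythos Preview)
Your proof is correct and follows essentially the same route as the paper: decompose $d_w$ as a nonnegative combination $\sum_{K\in\cK} z(K)\,d^K$ of complete-graph cut functions, apply \cref{lem:complete} to each factor to obtain a $1$-bounded sum-decomposition with increasing part attaining $\lceil |K|/2\rceil\lfloor |K|/2\rfloor$ at $K$, extend via $X\mapsto X\cap K$, and sum. Your write-up is in fact slightly more explicit than the paper's in verifying that the extension preserves submodularity and monotonicity and in pinning down the value $\varphi_1^K(K)$.
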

\begin{proof}
Let $z\colon \cK \to \bR_+$ be a function such that $\sum [z(K) \mid K \in \cK,\ e \in E(K)] = w(e)$ holds for each edge $e \in E$. For $K \in \cK$, let $d_K$ denote the (unweighted) cut function of the graph $G[K]$, and for a function $f\colon 2^K \to \bR_+$, let $f'\colon 2^V\to \bR_+$ denote the function defined by $f'(X) = f(X \cap K)$ for $X \subseteq V$. Then, $d_w = \sum_{K \in \cK} z(K) \cdot d'_K$ holds.
For each $K \in \cK$, \cref{lem:complete} implies that $d_K$ admits a decomposition $d_K = \varphi_{K, 1} + \varphi_{K,2}$ such that $\varphi_{K,1} \colon 2^K \to \bR$ is increasing submodular, $\varphi_{K,2}\colon 2^K \to \bR$ is decreasing submodular, and $\varphi_{K,1}(K) = \|d_K\| = \lceil |K|/2 \rceil \cdot \lfloor |K|/2\rfloor|$. By setting $\varphi_i \coloneqq \sum_{K \in \cK} z(K) \cdot \varphi'_{K,i}$ for $i \in [2]$, we get a monotonic sum-decomposition $d_w = \varphi_1 + \varphi_2$ such that $\varphi_1(V) = \sum_{K \in \cK} z(K) \cdot \varphi_{K,1}(K) = \sum_{K \in \cK} z(K) \cdot \lceil |K|/2 \rceil \cdot \lfloor |K|/2\rfloor$.
\end{proof}

We note that the upper bound given by \cref{thm:upper} is not tight: it can be shown that if $G$ is obtained by deleting one edge of the complete graph $K_7$ and $w$ is chosen to be identically one, then $\|d_w\| = 12$ while the upper bound given by the theorem is $12.5$. 

The bound given by \cref{thm:upper} is related to triangle packings.
Let $G=(V,E)$ be a graph, $w\colon E \to \bR_+$ be a weight function, and let $\mathcal{T}$ denote the vertex sets of the triangles of $G$. A function $x\colon \mathcal{T} \to \bR_+$ is called a \emph{weighted fractional triangle packing} if $\sum[x(T) \mid T \in \mathcal{T}, e \in E[T]] \le w(e)$ holds for each $e \in E$.
If $w$ is integer-valued, then an integer-valued weighted fractional triangle packing is called a \emph{weighted triangle packing}. A function $y\colon E \to \bR_+$ is called a \emph{fractional triangle cover} if $\sum[y(e) \mid e \in [T]] \ge 1$ holds for each triangle $T\in\cT$, and an integer-valued fractional triangle cover is called a \emph{triangle cover}. 
For a function $x\colon \cT \to \bR_+$ we use the notation $x(\cT) \coloneqq \sum[x(T) \mid T \in \cT]$, and for a function $y\colon E \to \bR_+$ we use $y_w(E) \coloneqq \sum[w(e)\cdot y(e) \mid e \in E]$.
The \emph{weighted triangle packing number} $\nu_w(G)$, the \emph{weighted fractional triangle packing number} $\nu^*_w(G)$, the  \emph{weighted triangle cover number} $\tau_w(G)$, and the \emph{weighted fractional triangle cover number} $\tau^*_w(G)$ are
\begin{align*}
\nu_w(G) & \coloneqq \max\{x(\cT) \mid  x \text{ is a weighted triangle packing}\}, \\
\nu^*_w(G) & \coloneqq \max\{x(\cT) \mid  x \text{ is a weighted fractional triangle packing}\}, \\
\tau_w(G) & \coloneqq \min\{y_w(E) \mid  y \text{ is a triangle cover}\}, \\
\tau^*_w(G) & \coloneqq \min\{y_w(E) \mid  y \text{ is a fractional triangle cover}\}.
\end{align*}
Note that $\nu_w(G) \le \nu^*_w(G) = \tau^*_w(G) \le \tau_w(G)$ holds for every graph $G$ by linear programming duality. We dismiss the subscript $w$ if the weight function is identically $1$. It is easy to see that $\tau(G) \le 3\nu(G)$ holds for any simple graph $G$, while Tuza's \cite{tuza1981conjecture} famous conjecture asserts that the stronger inequality $\tau(G) \le 2\nu(G)$ also holds. Chapuy, DeVos, McDonald, Mohar, and Scheide~\cite{chapuy2014packing} extended Tuza's conjecture to the weighted setting by asserting that $\tau_w(G) \le 2\nu_w(G)$ holds for any weight function $w\colon E\to \bZ_+$. By extending results of Haxell~\cite{haxell1999packing} and Krivelevich~\cite{krivelevich1995conjecture} for the unweighted case, they showed that the relaxations $\tau_w(G) \le 66/23 \cdot \nu_w(G)$, $\tau_w^*(G) \le 2\nu_w(G)$, and $\tau_w(G) \le 2\nu_w^*(G)$ hold. \cref{thm:upper} implies the following.

\begin{cor} \label{cor:nu*}
Let $G=(V,E)$ be a simple graph and $w\colon E\to\bR_+$ be a weight function. Then, $\|d_w\|_+\le w(E) - \nu^*_w(G)$.
\end{cor}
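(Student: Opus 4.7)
The plan is to derive the corollary from Theorem~\ref{thm:upper} by exhibiting a feasible weight $z\colon\cK\to\bR_+$ supported only on edges and triangles whose associated objective value equals $w(E)-\nu^*_w(G)$.

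First I would fix an optimal weighted fractional triangle packing $x^*\colon\cT\to\bR_+$, so that $x^*(\cT)=\nu^*_w(G)$ and $\sum[x^*(T)\mid T\in\cT,\ e\in E[T]]\le w(e)$ for every $e\in E$. Guided by $x^*$, I would set $z(T)\coloneqq x^*(T)$ on triangles, define
\[z(\{u,v\})\coloneqq w(uv)-\sum[x^*(T)\mid T\in\cT,\ uv\in E[T]]\]
on edges (nonnegative by the packing constraint), and put $z(K)\coloneqq 0$ on all remaining members of $\cK$, i.e., on singletons and on cliques of size at least four. Checking feasibility then reduces, for each edge $e=uv$, to the identity $z(\{u,v\})+\sum[z(T)\mid T\in\cT,\ e\in E[T]]=w(e)$, which holds by construction since the larger cliques and singletons carry $z$-weight zero.

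Finally I would plug $z$ into the bound of Theorem~\ref{thm:upper}. The coefficient $\lceil|K|/2\rceil\cdot\lfloor|K|/2\rfloor$ vanishes on singletons, equals $1$ on edges, and equals $2$ on triangles, so the bound evaluates to
\[\sum_{e\in E}z(\{e\})+2\sum_{T\in\cT}x^*(T)=w(E)-\sum_{e\in E}\sum_{\substack{T\in\cT\\ e\in E[T]}}x^*(T)+2\nu^*_w(G).\]
Swapping the order of summation in the middle term and using that every triangle contains exactly three edges turns that term into $3\nu^*_w(G)$, so the whole expression collapses to $w(E)-\nu^*_w(G)$, as desired. I do not anticipate a real obstacle: the argument is essentially a one-step LP-style reduction, and the only care needed is aligning the equality constraint of Theorem~\ref{thm:upper} with the slack in the fractional packing inequalities, which is precisely what dictates the definition of $z$ on edges.
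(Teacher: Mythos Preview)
Your proposal is correct and matches the paper's own proof essentially line for line: both pick an optimal fractional triangle packing $x$, set $z$ to equal $x$ on triangles, absorb the slack $w(e)-\sum_{T\ni e}x(T)$ on edges, put zero elsewhere, and then evaluate the objective of Theorem~\ref{thm:upper} to $w(E)-\nu^*_w(G)$.
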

\begin{proof}
Let $\cT$ and $\cK$ denote the vertex sets of triangles and complete subgraphs of $G$, respectively. 
Let $x\colon \cT \to \bR_+$ be a weighted fractional packing such that $x(\cT) = \nu^*_w(G)$. Define $z\colon \cK \to \bR$ by  
\[z(K) = \begin{cases} x(K) & \text{if $|K|=3$,} \\
w(uv) - \sum[x(T) \mid T \in \cT, e \in E[T]] & \text{if $K=\{u,v\}$ for an edge $uv\in E$,} \\
0 & \text{otherwise.}
\end{cases}
\]
Then, $\sum[z(K) \mid K \in \cK,\ e \in E[K]] = w(e)$ holds for each $e \in E$ by the definition of $z$, $z \ge 0$ as $x$ is a weighted fractional triangle packing, and 
\begin{align*}
\sum_{K \in \cK} \bigl\lceil \tfrac{|K|}{2} \bigr\rceil \cdot \bigl\lfloor \tfrac{|K|}{2}\bigr\rfloor \cdot z(K) =  \sum_{uv \in E} z(\{u,v\}) + \sum_{T \in \cT} 2\cdot z(T) = w(E) - \sum_{T \in \cT} x(T) = w(E)-\nu^*_w(G),
\end{align*}
concluding the proof.
\end{proof}

Note that the bound given by \cref{thm:upper} is stronger than the one given by \cref{cor:nu*}, e.g.\ for the complete graph $K_5$ with weight function identically one. 

\begin{ex}\label{ex:planar}
For triangulated planar graphs, \cref{cor:nu*} implies the existence of a $1$-bounded monotonic sum-decomposition of the cut function $d$. To see this, let $G=(V,E)$ be a triangulated planar graph, and let $\cT$ denote the set of faces of $G$. Then, setting $x(T)=1/2$ for every $T\in\cT$ results in a fractional triangle packing of value $1/2\cdot|\cT|=|E|/3$, hence $|E|-\nu^*(G)=2/3\cdot|E|$. On the other hand, the maximum size of a cut in $G$ is exactly $2|E|/3$. Indeed, by cut-cycle duality, every cut of $G$ corresponds to an Eulerian subgraph of its dual $G^*=(V^*,E^*)$. Since $G^*$ is a cubic bridgeless graph, it contains a perfect matching $M$ by Petersen's theorem~\cite{petersen1891theorie}. Note that $M$ has size $|V^*|/2=|E^*|/3=|E|/3$. The complement of this perfect matching hence corresponds to a cut of $G$ of size $2\cdot|E|/3$.
\end{ex}

\begin{ex}
Using \cref{lem:clique} and \cref{cor:nu*}, we compute $\|d\|_+$ for the unweighted cut function $d$ of the wheel graph $W_n$, defined by vertex set $V=\{u,v_1,\ldots, v_{n-1}\}$ and edge set $E=\{uv_i \mid 1 \le i \le n-1\} \cup \{v_iv_{i+1} \mid 1 \le i \le n-1\}$ where indices are meant in a cyclic order, i.e., define $v_0 \coloneqq v_{n-1}$ and $v_n \coloneqq v_1$. We claim that if
$n \ge 5$ and 
$d$ denotes the cut function of the wheel graph $W_n$, then $\|d\|_+ = 3(n-1)/2$.
As a corollary, we get that $\|d\|_+$ is not an integer if $n \ge 6$ is even. 

The upper bound $\|d\|_+ \le 3(n-1)/2$ follows by applying \cref{cor:nu*}.
To prove the lower bound $\|d\|_+ \ge 3(n-1)/2$, let $H=(V,\cK)$ denote the hypergraph where $\cK$ consists of the vertex sets of complete subgraphs of $W_n$.
Note that $|K|\le 3$ for each $K\in \cK$ as $n \ge 5$.
Let $d=\varphi_1+\varphi_2$ be a monotonic sum-decomposition of the cut function $d$,  and $w'\colon \cK \to \bR$ be the weight function provided by \cref{lem:clique} such that $\varphi_1 = i_{H, w'}$. As $\varphi_1$ is increasing, \begin{align} \label{eq:wheel1} 
0\le \varphi_1(V-u)-\varphi_1(V-u-v_i) = w'(v_i) + w'(\{v_i,v_{i-1}\}) + w'(\{v_i,v_{i+1}\}) \end{align}
holds for $i \in[n-1]$. 
%where we define $v_0 \coloneqq v_{n-1}$ and $v_n \coloneqq v_1$. 
As $\varphi_2=d-\varphi_1$ is decreasing,
\begin{align} \label{eq:wheel2} 1 = d(\{u,v_i\})-d(u) \le\varphi_1(\{u,v_i\}) - \varphi_1(u) = w'(v_i) + w'(\{u,v_i\}). \end{align}
By Lemma~\ref{lem:9} with $v=v_i$, we get
\begin{equation} \label{eq:wheel3} 3 \le 2 w'(v_i) + \sum_{x \in \{u,v_{i-1},v_{i+1}\}}  w'(\{v_i,x\}) + \sum_{x \in \{v_{i-1}, v_{i+1}\}} w'(\{v_i,u,x\}).\end{equation}
Applying \cref{lem:9} with $v=u$ and multiplying by two, we get
\begin{equation} \label{eq:wheel4} 2(n-1) \le 4w'(u) + 2\cdot \sum_{i=1}^{n-1} \left(w'(uv_i) + w'(uv_iv_{i+1})\right). \end{equation}
Adding up the inequalities \eqref{eq:wheel1}, \eqref{eq:wheel2} and \eqref{eq:wheel3} for $i \in [n-1]$, and adding them to \eqref{eq:wheel4}, we get $6(n-1) \le 4 \cdot i_{H,w'}(V)$, proving our claim.
%\end{proof}
\end{ex}

%%%%%%%%%%%%%%%%%%%%%%%%%%%%%%%%
\section{Conclusions and open problems}
\label{sec:open}
%%%%%%%%%%%%%%%%%%%%%%%%%%%%%%%%

In this paper, we focused on problems related to decompositions of submodular set functions. We characterized the unique largest charge minorizing an increasing submodular set function  which, when subtracted from the original function, still gives an increasing set function. We then considered the problem whether a submodular set function over an infinite domain can be written as the sum of an increasing and a decreasing submodular function, or as the difference of two increasing submodular functions. As a fundamental difference to the finite case, we showed that such decompositions do not always exist. We introduced the notion of weakly infinite-alternating set functions, and verified the existence of the required decompositions for the members of this class. Finally, we studied weighted cut functions of graphs and provided various constructions and bounds for those. We close the paper by a list of open problems.
\medskip

1. \textbf{Monotonicity conjecture.} Recall that for a set function $\varphi$, we denoted by $\|\varphi\|_+$ 
the minimum value of $\varphi_1(J)$ over monotonic sum-decompositions of $\varphi$. Let $G=(V,E)$ be a graph and $w,w'\colon E\to\bR_+$ be weight functions satisfying $w'\leq w$. The following natural, seemingly simple problem remains open: \emph{Does $\|d_{G,w'}\|_{+}\leq \|d_{G,w}\|_+$ 
always hold?} The question is motivated by the following observation. Let $G$ be a simple graph and let $E'\subseteq E$ be a triangle cover. Then, for $w'(e)\coloneqq w(e)\cdot(1-\mathbbm{1}(e\in E'))$ the support of $w'$ is a triangle-free graph so $\|d_{G,w'}\|_+\geq w(E\setminus E')$ by Theorem~\ref{thm:triangle-free}. Therefore, the above inequality would imply a lower bound of $w(E)-\tau_w(G)\leq \|d_{G,w'}\|_+\leq \|d_{G,w}\|_+$. When complemented with \cref{cor:nu*}, this would lead to the bounds $|E|-\tau_w(G)\leq \|d_w\|_+\leq |E|-\nu^*_w(G)$ for the value of $\|d_w\|_+$. By the result of Krivelevich~\cite{krivelevich1995conjecture}, we know that $\tau_w(G)\leq 2\cdot \nu^*_w(G)$, meaning that the lower and upper bounds would not be far from each other. 
\medskip

2. \textbf{Planar graphs.} Example~\ref{ex:planar} shows that for a triangulated planar graph $G=(V,E)$, the value of $\|d_G\|_+$ is as small as possible in the sense that it is equal to the maximum size of a cut in $G$. Nevertheless, the proof heavily uses the fact that the trivial decomposition $d_G=e_G+(-i_G)$ can be significantly improved due to $G$ having a large fractional triangle packing number. The question remains: \emph{Can the value of $\|d_G\|_+$ be determined if $G$ is planar?}
\medskip

3. \textbf{Difference of two submodular functions.} 
In the finite case, any (not necessarily submodular) set function $f$ can be written as the difference of two submodular functions by Theorem~\ref{thm:finite_diff-decomp}. In the infinite case, we know that a decomposition into the difference of two increasing submodular set functions does not exist in the general, but we can ask the following. \emph{Let $\varphi$ be a set function with bounded variation i.e., it can be written as the difference of two increasing set functions. Can it be written as the difference of two bounded submodular set functions?}

\bigskip

\paragraph{Acknowledgement.} The authors are grateful to Miklós Abért, Márton Borbényi, Balázs Maga, Tamás Titkos, László Márton Tóth and Dániel Virosztek for helpful discussions. 

András Imolay was supported by the Rényi Doctoral Fellowship of the Rényi Institute. Tamás Schwarcz was supported by the \'{U}NKP-23-3 New National Excellence Program of the Ministry for Culture and Innovation from the source of the National Research, Development and Innovation Fund. This research has been implemented with the support provided by the Lend\"ulet Programme of the Hungarian Academy of Sciences -- grant number LP2021-1/2021 and by the Dynasnet European Research Council Synergy project -- grant number ERC-2018-SYG 810115.

%%%%%%%%%%%%%%%%%%%%%%%%%%%%%%%%
\bibliographystyle{abbrv}
\bibliography{decomposition}

\end{document}